\numberwithin{equation}{section}
\newtheorem{preexample}{Example}[section]
\newenvironment{example}{\begin{preexample}\upshape}{\end{preexample}}
\newcommand{\beq}{\begin{equation}}
\newcommand{\eeq}{\end{equation}}
\newcommand{\be}{\begin{eqnarray}}
\newcommand{\ee}{\end{eqnarray}}
\newcommand{\bex}{\begin{eqnarray*}}
\newcommand{\eex}{\end{eqnarray*}}
\newcommand{\ba}{\begin{array}}
\newcommand{\ea}{\end{array}}
\def\dps{\displaystyle}
\title{Numerical analysis of a high-order scheme for nonlinear fractional differential equations with uniform accuracy\thanks{Submitted to the editors DATE.\funding{This research
was supported by National Natural Science Foundation of China (Grant numbers 11901135, 11961009),
Foundation of Guizhou Science and Technology Department (No. [2017]1086),
The first author would like to acknowledge the financial support by the China Scholarship Council (201708525037).
}}}
\author
{Junying Cao\thanks{School of Data Science and Information Engineering,
Guizhou Minzu University, 550025 Guiyang, China. (\email{caojunying@gzmu.edu.cn})
}
\and
Zhenning Cai\thanks{Department of Mathematics, National University of Singapore, Singapore 119076, Singapore.
(\email{matcz@nus.edu.sg})}
}
\begin{document}

\maketitle

\begin{abstract}
We introduce a high-order numerical scheme for fractional ordinary differential
equations with the Caputo derivative. The method is developed by dividing the
domain into a number of subintervals, and applying the quadratic interpolation
on each subinterval. The method is shown to be unconditionally stable, and for
general nonlinear equations, the uniform sharp numerical order $3-\nu$ can be
rigorously proven for sufficiently smooth solutions at all time steps. The
proof provides a general guide for proving the sharp order for higher-order
schemes in the nonlinear case. Some numerical examples are given to validate
our theoretical results.
\end{abstract}

\begin{keywords}
Caputo derivative,
Fractional ordinary differential equations,
High-order numerical scheme,
Stability and convergence analysis
\end{keywords}
\section {Introduction} \label{sec:0}
In the past decades, fractional differential equations have been studied
extensively by many researchers, due to its success in describing some physical
phenomena and chemical processes more accurately than integer order
differential equations \cite{miller1993,luk2013prop,kilb2006t,maindi2010}.
Like most classical differential equations, the exact solutions of fractional
order differential equations are usually not available to us. Even if
analytical solutions can be found, they usually appear in the form of series
and are difficult to evaluate. Therefore, the numerical study of fractional
differential equations has also inspired a number of excellent research works
such as \cite{guo2015,li2015n,garrappa2018,lub1986,dieelm2002,diethelm2006,
zeng2018s,jiang2017}.

In this work, we are interested in the following initial value problem: for
some $\nu\in (0,1)$, we would like to find $y(x)$ such that
\beq\label{pb}
 _0D^{\nu}_x y(x)=f(x,y(x)),\quad 0< x\leq T,
\eeq
subject to the initial condition $y(0)=y_0$. In \eqref{pb},
the operator $_0D^{\nu}_x$ is the Caputo derivative,
defined by
\beq\label{caputo}
 _0D^{\nu}_xy(x)=\int^x_0\omega_{1-\nu}(x-s)y'(s)ds,
\eeq
where $\omega_{1-\nu}$ is defined by
\beq\label{omega}
\omega_{1-\nu}(x)=x^{-\nu}/\Gamma(1-\nu)
\eeq
with $\Gamma(\cdot)$ being Euler's gamma function. The function
$\omega_{1-\nu}(x)$ acts as the convolutional kernel, which satisfies
\be\label{omg}
\int_{s}^{t}\omega_{\nu}(t-\mu)\omega_{1-\nu}(\mu-s)d\mu=\omega_1(t-s)=1,
  \qquad \forall 0<s<t<+\infty.
\ee
The numerical method for this equation has been extensively studied in the
context of linear partial differential equations. For example, the L1-type
schemes based on piecewise linear interpolation has been studied in
\cite{lin2007, Gao2017SJ}, where the numerical order is $2-\nu$. Higher-order
schemes can be achieved by using quadratic interpolation \cite{GSZ2014} or
Taylor expansion \cite{LWD2014}, and the convergence order can reach $3-\nu$
for smooth solutions. Generalization to $(r+1-\nu)$th-order schemes have been
stuided in \cite{cao2015high,li2016high} by Lagrange interpolation.  A common
problem in these methods is that the theorectical order of the solution at the
first time step can only achieve $2-\nu$, as is shown in the numerical analysis
in \cite{LWD2014}. Such a problem is also mentioned in \cite{LLZ2018}, where
the author uses a finer grid near the initial value to maintain the numerical
accuracy. Other related works include, but are not limited to, \cite{alikh2015,
lv2016error, vog2017, bafft2017, du2019}.

In principle, these methods can be directly generalized to nonlinear problems.
However, the analysis of convergence order on such methods for nonlinear
problems is less seen in the literature. In \cite{JCX2013}, the authors
converted the Caputo fractional derivative to the Volterra integral and proved
the order of accuracy $3+\nu$ for $0<\nu<1$ and $4$ for $\nu\geq 1$. A similar
technique is applied in \cite{nguyn2017}. In \cite{JLZ2018}, the authors
applied the L1 formula to the subdiffusion equation, and obtained the numerical
order $\nu$ due to the insufficient smoothness of the solution. The numerical
order $2-\nu$ is proven in \cite{Li2018, Liao2019}. However, theoretical proofs
of numerical schemes with order $3-\nu$ for nonlinear problems are rarely seen
in the literature. In \cite{Luo2017}, it is demonstrated that the
generalization of schemes with order $3-\nu$ for linear problems also works for
nonlinear problems, but the proof for nonlinear problems is given only for the
truncation error. Clearly, nonlinearity has caused significantly difficulty in
the numerical analysis, especially on the transition from the estimation of the
truncation error to the error of the solution.

The aim of this work is to introduce a new $(3-\nu)$th-order scheme for the
fractional differential equation \eqref{pb}. Our main contributions include:
\begin{itemize}
\item A new finite-difference approximation of the Caputo derivative is
  developed, which leads to a high-order numerical method for \eqref{pb}
  with uniform accuracy at all time steps.
\item The unconditional stability for the eigenvalue problem is proven
  rigorously.
\item A novel proof for the convergence order is proposed for the general
  nonlinear right-hand sides.
\end{itemize}
Our method is based on the block-by-block approach \cite{KA06,HTV11} commonly
used for integral equations \cite{YOA54,LIP85}. The retain the numerical order
at the first time step, the proposed scheme couples the solutions at first two
time steps. However, such a coupling is not required in the later steps. The
analysis of stability is complicated by these initial steps, which requires
close look at the structure of the solutions. The convergence analysis is based
on a novel technology that couples the idea of a recent work \cite{liao2019dis}
and the strategy we used in the proof of stability, so that the order $3-\nu$
can be achieved for sufficiently smooth solutions and general nonlinear
right-hand sides.

The rest of this paper is organized as follows. Our numerical scheme is
introduced in Section \ref{sec:2}. In Section \ref{sec:3}, we prove the
unconditional stability of our method. Section \ref{sec:4} is devoted to the
proof of the convergence order, as is verified by our numerical examples in
Section \ref{sec:5}. Finally, some concluding remarks are given in Section
\ref{sec:6}.

\section {A finite difference approximation to the Caputo derivative}
\label{sec:2}

In this section, we will construct an efficient numerical scheme for the problem \eqref{pb}.
For simplicity, we consider a uniform grid on $[0,T]$ defined by the grid
points $x_j=j\Delta x$, $j=0,1,2,\cdots,2N$, where $N$ is a positive integer,
and $\Delta x=\frac{T}{2N}$ is the grid size.  Below we are going to use the
short hand $y_i=y(x_i)$ and $f_i=f(x_i,y_i)$ for all $i=0,1,2,\cdots,2N$.

First, we propose a high-order approximation to the Caputo derivative $
_0D^{\nu}_xy(x)$ on grid points $x_i$ based on piecewise quadratic
interplation. To present the quadratic interpolation, we introduce the
following notation:
\be\label{I}
I_{[x_j,x_{j+2}]}y(x) =
  \varphi_{0,j}(x)y_j+\varphi_{1,j}(x)y_{j+1}+\varphi_{2,j}(x)y_{j+2},
\qquad j \in \mathbb{N},
\ee
where $\varphi_{i,j}(x),i=0,1,2$, are Lagrange interpolating polynoimals defined as
\bex
\varphi_{0,j}(x)=\frac{(x-x_{j+1})(x-x_{j+2})}{2\Delta x^2}, \quad
\varphi_{1,j}(x)=\frac{(x-x_{j})(x-x_{j+2})}{-\Delta x^2}, \quad
\varphi_{2,j}(x)=\frac{(x-x_j)(x-x_{j+1})}{2\Delta x^2}.
\eex
When $j = 1,2$, we approximate $_0D^{\nu}_xy(x_j)$ by $_0D^{\nu}_x (I_{[x_0,x_2]} y)(x_j)$:
\begin{align}
\label{cfut1b}
\begin{split}
 _0D^{\nu}_xy(x_1)&=\int^{x_1}_0 y'(s)\omega_{1-\nu}(x_1-s)
ds
\approx\int^{x_1}_0[I_{[x_0,x_2]}y(s)]'
\omega_{1-\nu}(x_1-s)
ds\\
&=A_1^{0,0}y_0+A_1^{1,0}y_{1}+A_1^{2,0}y_2
\end{split} \\
\label{cfut2b}
\begin{split}
 _0D^{\nu}_xy(x_2)&=\int^{x_2}_0y'(s)
 \omega_{1-\nu}(x_2-s)
ds
\approx\int^{x_2}_0[I_{[x_0,x_2]}y(s)]'
\omega_{1-\nu}(x_2-s)ds\\
&=A_2^{0,0}y_0+A_2^{1,0}y_{1}+A_2^{2,0}y_2
\end{split}
\end{align}
where
\bex
A_j^{i,0}=\int^{x_j}_0\varphi_{i,0}'(s)\omega_{1-\nu}(x_j-s)ds,
  \qquad i=0,1,2, \quad j = 1,2.
\eex

To approximate $_0D^{\nu}_xy(x_j)$ for $j > 2$, we assume that the values of
$y_0, y_1, \cdots, y_j$ are all given. Different approximations will be used
for odd and even $j$. When $j = 2m+1$, we approximate $y(x)$, $x \in
[0,x_{2m+1}]$ by
\bex
y(x) \approx \left\{ \begin{array}{ll}
  I_{[x_0,x_2]} y(x), & \text{if } x \in [x_0, x_1], \\
  I_{[x_{2k-1}, x_{2k+1}]} y(x), & \text{if } x \in [x_{2k-1}, x_{2k+1}], ~~ k = 1,2,\cdots,m.
\end{array} \right.
\eex
This suggests the following approach
\beq\label{cfut2m1b}
\begin{split}
& _0D^{\nu}_xy(x_{2m+1}) \\
= &\int^{x_{1}}_0y'(s)\omega_{1-\nu}(x_{2m+1}-s)ds
+\sum_{k=1}^m\int^{x_{2k+1}}_{x_{2k-1}}y'(s)\omega_{1-\nu}(x_{2m+1}-s)ds
\\
\approx & \int^{x_{1}}_0[I_{[x_0,x_2]}y(s)]'\omega_{1-\nu}(x_{2m+1}-s)ds
+\sum_{k=1}^m\int^{x_{2k+1}}_{x_{2k-1}}[I_{[x_{2k-1},x_{2k+1}]}y(s)]'
\omega_{1-\nu}(x_{2m+1}-s)ds
\\
= &\, A_{2m+1}^{0,0}y_0+A_{2m+1}^{1,0}y_{1}+A_{2m+1}^{2,0}y_2 +\sum_{k=1}^m
  \left( A_{2m+1}^{0,k}y_{2k-1}+A_{2m+1}^{1,k}y_{2k}+A_{2m+1}^{2,k}y_{2k+1} \right),
\end{split}
\eeq
where
\begin{align}
&A_{2m+1}^{i,0}=\int^{x_1}_0\varphi_{i,0}'(s)
\omega_{1-\nu}(x_{2m+1}-s)ds,\qquad i=0,1,2,\\
\label{A2mp1ik}
&A_{2m+1}^{i,k}=\int^{x_{2k+1}}_{x_{2k-1}}\varphi_{i,2k-1}'(s)
\omega_{1-\nu}(x_{2m+1}-s)ds, \quad i=0,1,2, \quad k=1,2,\cdots,m.
\end{align}
Similarly, when $j = 2m+2$, we approximate the Caputo derivative on $x_j$ based
on the following piecewise quadratic interpolation of $y(x)$:
\beq\label{ut2m2}
y(x)\approx I_{[x_{2k},x_{2k+2}]}y(x), \quad \forall x \in [x_{2k},x_{2k+2}],
  \quad k = 0,1,\cdots,m.
\eeq
As a consequence, $_0D^{\nu}_xy(x_{2m+2})$ can be approximated in the same way
as \eqref{cfut2m1b}, and the result is
\beq\label{cfut2m2b}
 _0D^{\nu}_xy(x_{2m+2}) \approx \sum_{k=0}^m
   \left( A_{2m+2}^{0,k}y_{2k}+A_{2m+2}^{1,k}y_{2k+1}+A_{2m+2}^{2,k}y_{2k+2} \right),
\eeq
where
\beq\label{a2m2io}
A_{2m+2}^{i,k}=\int^{x_{2k+2}}_{x_{2k}}\varphi_{i,2k}'(s)
\omega_{1-\nu}(x_{2m+2}-s)ds,\quad i=0,1,2, \quad k=0,1,\cdots,m.
\eeq

In all cases, the Caputo derivative $_0D^{\nu}_x y(x_j)$ is approximated by a
linear combination of $y_k$. Furthermore, by straightforward calculation, it
can be found that every $A_j^{i,k}$ is proportional to $\Delta x^{-\nu}$.
Therefore we summarize \eqref{cfut1b}\eqref{cfut2b}\eqref{cfut2m1b} and
\eqref{cfut2m2b} to write down them uniformly as
\beq \label{approx}
_0D^{\nu}_x y(x_j) \approx {}_0D_{\Delta x}^{\nu} y_j,
\eeq
where the newly introduced operator $_0D_{\Delta x}^{\nu}$ is the discrete
Caputo derivative defined by
\begin{displaymath}
_0D_{\Delta x}^{\nu} y_j = \left\{ \begin{array}{ll}
  {\Delta x}^{-\nu} \left( \widehat{D}_0y_0+\widehat{D}_1y_1+\widehat{D}_2y_2 \right), & \text{if } j = 1, \\[10pt]
  {\Delta x}^{-\nu} \left( \widetilde{D}_0y_0+\widetilde{D}_1y_1+\widetilde{D}_2y_2 \right), & \text{if } j = 2,\\[10pt]
  {\Delta x}^{-\nu} \dps\sum_{k=0}^{2m+1}D_{k}^{(m)}y_k, & \text{if } j=2m+1,~~m=1,2,\cdots,N-1,\\[15pt]
  {\Delta x}^{-\nu} \dps\sum_{k=0}^{2m+2}\overline{D}_{k}^{(m)}y_k, & \text{if } j=2m+2,~~m=1,2,\cdots,N-1.
\end{array} \right.
\end{displaymath}
Here all the coefficients ``$D$''s are constants depending only on $\nu$, and
their values can be computed analytically:
\begin{displaymath}
\begin{aligned}
& \widehat{D}_0 = \frac{3\nu-4}{2\Gamma(3-\nu)}, \qquad
  \widehat{D}_1 = \frac{2(1-\nu)}{\Gamma(3-\nu)}, \qquad
  \widehat{D}_2 = \frac{\nu}{2\Gamma(3-\nu)}, \\
& \widetilde{D}_0 = \frac{3\nu-2}{2^{\nu} \Gamma(3-\nu)}, \qquad
  \widetilde{D}_1 = -\frac{4\nu}{2^{\nu} \Gamma(3-\nu)}, \qquad
  \widetilde{D}_2 = \frac{\nu+2}{2^{\nu} \Gamma(3-\nu)}, \\
& \overline{D}_{0}^{(m)}=\frac{1}{\Gamma(3-\nu)}\left(-\frac{2-\nu}{2} \left[(2m)^{1-\nu}+3(2m+2)^{1-\nu}\right]-\left[(2m)^{2-\nu}-(2m+2)^{2-\nu}\right]\right),\\
& \overline{D}_{2k}^{(m)}=\frac{1}{\Gamma(3-\nu)} \bigg( -\frac{2-\nu}{2} \left[(2m-2k)^{1-\nu}+6(2m-2k+2)^{1-\nu}+(2m-2k+4)^{1-\nu}\right] \\
& \hspace{90pt} -\left[(2m-2k)^{2-\nu}-(2m-2k+4)^{2-\nu}\right]\bigg), \qquad k=1,2,\cdots,m,\\
& \overline{D}_{2k+1}^{(m)}=\frac{2}{\Gamma(3-\nu)} \bigg( (2-\nu)\left[(2m-2k)^{1-\nu}+(2m-2k+2)^{1-\nu}\right] \\
& \hspace{90pt} +\left[(2m-2k)^{2-\nu}-(2m-2k+2)^{2-\nu}\right]\bigg), \qquad k=0,1,\cdots,m, \\
& D_{0}^{(m)} = \frac{1}{\Gamma(3-\nu)}
  \left( \frac{2-\nu}{2}\left[(2m)^{1-\nu}-3(2m+1)^{1-\nu}\right]-(2m)^{2-\nu}+(2m+1)^{2-\nu} \right),\\
& D_{1}^{(m)} = \frac{1}{\Gamma(3-\nu)}
  \bigg( -\frac{2-\nu}{2}\left[(2m-2)^{1-\nu}+3(2m)^{1-\nu}-4(2m+1)^{1-\nu}\right] \\
& \hspace{90pt} -(2m-2)^{2-\nu}+3(2m)^{2-\nu}-2(2m+1)^{2-\nu} \bigg), \\
& D_{2}^{(m)} =\frac{1}{\Gamma(3-\nu)} \bigg( \frac{2-\nu}{2}\left[4(2m-2)^{1-\nu}+3(2m)^{1-\nu}-(2m+1)^{1-\nu}\right] \\
& \hspace{90pt} +2(2m-2)^{2-\nu}-3(2m)^{2-\nu}+(2m+1)^{2-\nu} \bigg),\\
& D_{2k}^{(m)}= \overline{D}_{2k+1}^{(m)}, \quad D_{2k-1}^{(m)} = \overline{D}_{2k}^{(m)}, \qquad k=2,3,\cdots,m, \\
\end{aligned}
\end{displaymath}

Based on the approximation \eqref{approx}, the numerical scheme for \eqref{pb}
with initial condition $y(0) = y_0$ can be written as
\beq\label{scheme}
_0D_{\Delta x}^{\nu}y_j = f(x_j, y_j), \qquad j=1,2,\cdots,2N.
\eeq
The above scheme is implicit. Since $_0D_{\Delta x}^{\nu}y_1$ depends on $y_2$, the values of $y_1$ and $y_2$ have to be solved simultaneously, which is the key to getting uniform accuracy without loss of precision at the first time step. For $j > 2$, solving $y_j$ needs only to solve a single equation.

\section{Stability analysis} \label{sec:3}

This section is devoted to the stability analysis of our numerical scheme.
Consider the fractional ordinary differential equation \eqref{pb} with
right-hand side
\beq\label{eigenpb}
f(x,y)=-\lambda y, \qquad \lambda > 0.
\eeq
In this case, the scheme \eqref{scheme} for $k > 2$ can be rewritten as
\beq \label{d1ud}
(1 + \tilde{\alpha}) y_j = \sum_{k=0}^{j-1} d_k^j y_k, \quad k = 3,4,\cdots,2N,
\eeq
where
\begin{subequations}\label{cxis}
\begin{align} \label{cxis1}
&d_{k}^{2m+1}=-\frac{D_{k}^{(m)}}{\alpha_0}, \qquad k=0,1,\cdots,2m,\\
&d_{k}^{2m+2}=-\frac{\overline{D}_{k}^{(m)}}{\alpha_0}, \qquad k=0,1,\cdots,2m+1,\\
&\alpha_0=D_{2m+1}^{(m)}=\overline{D}_{2m+2}^{(m)}=\frac{\nu+2}{\Gamma(3-\nu)2^{\nu}},
\qquad \tilde{\alpha}=\frac{\lambda{\Delta x}^{\nu}}{\alpha_0}>0.
\end{align}
\end{subequations}
Our purpose is to show that there exists a constant $K$ such that $|y_j| < K
|y_0|$ for any $j$. Such a property would be obvious from \eqref{d1ud} if all
the coefficients $d_k^j$ were positive. Unfortunately, this is not true for
some $\nu \in (0,1)$. The following lemma shows the properties of the
coefficients $d_k^j$:

\begin{lemma}\label{xis1}
For any $0<\nu<1$, $j\geq4$, the coefficients in the scheme \eqref{d1ud} satisfy
\begin{enumerate}[{\rm (1)}]
\item $\dps\sum_{k=0}^{j-1}d_{k}^{j}=1$.
\item $d_{k}^{j}> \dfrac{2\nu}{3\alpha_0 \Gamma(1-\nu)} (j-k)^{-\nu-1}$, $k=2,\cdots,j-3$.
\item $d_{j-1}^{j}>0$, $d_0^{j} > 0$, $d_1^{j} > 0$.
\item There exists $\nu_0 \in (0,1)$ such that $d_{j-2}^{j} > 0$ if $\nu \in (0,\nu_0)$,
  and $d_{j-2}^j < 0$ if $\nu \in (\nu_0, 1)$.
\item $\dfrac{1}{4}(d_{j-1}^{j})^2+d_{j-2}^{j}> \dfrac{2^{-\nu} \nu}{8\alpha_0 \Gamma(1-\nu)} > 0$.
\end{enumerate}
\end{lemma}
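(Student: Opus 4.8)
The plan is to isolate the one bookkeeping identity and the one asymptotic expansion that do all the work, and to exploit the fact that the two ``terminal'' coefficients turn out to be explicit functions of $\nu$ alone.

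For (1) I would not touch the explicit coefficient formulas: $_0D_{\Delta x}^{\nu}$ is obtained by replacing $y$ with the continuous piecewise-quadratic interpolant used to build it and integrating $(\cdot)'(s)\,\omega_{1-\nu}(x_j-s)$, and since that interpolant reproduces constants while $_0D_x^{\nu}$ annihilates them, the coefficients of $y_0,\dots,y_j$ in $_0D_{\Delta x}^{\nu}y_j$ sum to $0$ in every case; the coefficient of $y_j$ being $\alpha_0\Delta x^{-\nu}$, dividing by $-\alpha_0$ gives $\sum_{k=0}^{j-1}d_k^j=1$. For (2), (3), (5) the common device is a mean-slope/curvature split: on a block $[x_{2\ell},x_{2\ell+2}]$ write $(\Pi y)'(s)=\mu_\ell+\sigma_\ell(s-x_{2\ell+1})$ with $\mu_\ell$ the central slope and $\sigma_\ell$ the constant second divided difference, so the block's contribution to $_0D_{\Delta x}^{\nu}y_j$ is $\mu_\ell\int_{x_{2\ell}}^{x_{2\ell+2}}\omega_{1-\nu}(x_j-s)\,ds+\sigma_\ell\int_{x_{2\ell}}^{x_{2\ell+2}}(s-x_{2\ell+1})\omega_{1-\nu}(x_j-s)\,ds$, where the first (``mean-slope'') weight is positive and the second (``curvature'') weight is positive as well because $\omega_{1-\nu}$ is decreasing. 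Equivalently, using $a^{2-\nu}-b^{2-\nu}=(2-\nu)\int_b^a t^{1-\nu}\,dt$, each coefficient becomes a fixed positive multiple of a quadrature error for $h(t)=t^{1-\nu}$, i.e.\ of a Peano-kernel integral of $h''(t)=-\nu(1-\nu)t^{-\nu-1}<0$; all size estimates then follow from the monotonicity and convexity of $\omega_{1-\nu}$ and of $\omega_{1-\nu}'$.

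For (2), fix $2\le k\le j-3$. If $x_k$ is a block midpoint, only the curvature integral of the single block through it survives, and bounding $|\omega_{1-\nu}'|$ below by its value at the far end of that block yields a bound of the stated form with a constant comfortably larger than $\tfrac23$. If $x_k$ is a node shared by two blocks, its coefficient is a mean-slope part (after Abel summation of the positive weights $\int\omega_{1-\nu}$, the L1-type term $\sim\tfrac{2\nu}{\alpha_0\Gamma(1-\nu)}(j-k)^{-\nu-1}$) minus a curvature part ($\sim\tfrac{4\nu}{3\alpha_0\Gamma(1-\nu)}(j-k)^{-\nu-1}$); writing both through the quadrature-error form, the difference is governed by a Taylor/Peano expansion of $g(p)=\tfrac12 h(p-2)+3h(p)+\tfrac12 h(p+2)-\int_{p-2}^{p+2}h$ about $p=j-k$, whose leading term is $-\tfrac23 h''(p)$. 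The indices $k$ adjacent to the special first block (essentially $k=2$ when $j$ is odd) carry an extra contribution from the restricted integral on $[x_0,x_1]$, which is of order $\Delta x^{2-\nu}(j-k)^{-\nu-1}$ and is absorbed. \emph{I expect this step to be the main obstacle:} the shared-node estimate must be pushed to the exact leading constant and made uniform in $p\ge4$ and in $\nu\in(0,1)$, which forces one to control the expansion of $g(p)$ beyond leading order (the higher Peano terms all carry the sign of $h^{(2k)}<0$) rather than estimate it crudely.

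For (3)--(4), the decisive remark is that $d_{j-1}^j$ and $d_{j-2}^j$ are \emph{independent of $j$} for $j\ge4$: in the relevant formulas ($-\overline D_{2m+1}^{(m)}/\alpha_0$, $-\overline D_{2m}^{(m)}/\alpha_0$, and the identical $D^{(m)}$ values for odd $j$) the power arguments collapse to $0$, and evaluating with $\Gamma(3-\nu)=(2-\nu)(1-\nu)\Gamma(1-\nu)$ and the explicit $\alpha_0$ gives the closed forms $d_{j-1}^j=\tfrac{4\nu}{\nu+2}$ and $d_{j-2}^j=\tfrac{2}{\nu+2}\psi(\nu)$ with $\psi(\nu)=6(1-2^{-\nu})-\nu(3+2^{-\nu})$; the first settles $d_{j-1}^j>0$. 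For (4), $\psi(0)=0$, $\psi'(0)=6\ln2-4>0$, and $\psi''(\nu)=2^{-\nu}\ln2\,[\,2-(6+\nu)\ln2\,]<0$ on $(0,1)$ since $(6+\nu)\ln2>6\ln2>2$; hence $\psi$ is strictly concave, leaves $0$ with positive slope, and, as $\psi(1)=-\tfrac12<0$, has a unique zero $\nu_0\in(0,1)$ with $\psi>0$ on $(0,\nu_0)$ and $\psi<0$ on $(\nu_0,1)$. The remaining signs $d_0^j>0$, $d_1^j>0$ amount to $D_0^{(m)},D_1^{(m)},\overline D_0^{(m)},\overline D_1^{(m)}<0$ for all $m\ge1$; in the mean-slope/curvature decomposition the first-block contributions of $y_0$ (and of $y_1$, to which one adds the negative left-endpoint contribution of the block $[x_1,x_3]$ when $j$ is odd) are sums of quantities of the \emph{same} sign, because $s-x_1<0$ on $[x_0,x_1]$, so concavity of $\omega_{1-\nu}$ closes these cases.

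For (5), using the closed forms from (3)--(4), $\tfrac14(d_{j-1}^j)^2+d_{j-2}^j=\tfrac{4\nu^2}{(\nu+2)^2}+\tfrac{2\psi(\nu)}{\nu+2}$ and $\tfrac{2^{-\nu}\nu}{8\alpha_0\Gamma(1-\nu)}=\tfrac{\nu(1-\nu)(2-\nu)}{8(\nu+2)}$ are both elementary functions of $\nu$, so the claim reduces to a single one-variable inequality on $(0,1)$ between a polynomial--exponential expression and a polynomial, both vanishing at $\nu=0$; I would clear denominators and compare the two sides and their first derivatives at $\nu=0$, using a standard polynomial bound such as $2^{-\nu}\le 1-\nu\ln2+\tfrac12\nu^2\ln^2 2$ to conclude, and positivity of the right-hand side is immediate.
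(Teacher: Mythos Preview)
Your plan is largely sound and, for items (1), (3), (4), (5), it essentially coincides with the paper's argument. In particular, the paper also observes that $d_{j-1}^j$ and $d_{j-2}^j$ are functions of $\nu$ alone and reduces to the explicit forms $d_{j-1}^j=\tfrac{4\nu}{\nu+2}$ and $d_{j-2}^j=\tfrac{2}{\nu+2}\,[3(2-\nu)-(6+\nu)2^{-\nu}]$; your concavity analysis of $\psi$ for (4) matches the paper's $h''<0$, $h(0)=0$, $h(1)<0$ argument verbatim. For (5) the paper reaches exactly the same one--variable inequality $12-\nu^2-(12+8\nu+\nu^2)2^{-\nu}>\tfrac{1}{16}(2+\nu)(2-\nu)(1-\nu)\nu$ that you isolate; they verify it by a third--derivative sign chase rather than a quadratic Taylor bound on $2^{-\nu}$, and your bound is tight enough only if you keep the full polynomial comparison on $(0,1)$, not just the values and first derivatives at $0$. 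For the positivity of $d_0^j$ and $d_1^j$ the paper does not use the structural sign argument you sketch but invokes two dedicated appendix inequalities (their Lemma~\ref{xis0}(3) and the first line of Lemma~\ref{gineq}); your route is cleaner if it closes, but note a slip: $\omega_{1-\nu}(x)=x^{-\nu}/\Gamma(1-\nu)$ is \emph{convex}, not concave.

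The genuine divergence is in (2). The paper does not use your Peano--kernel/quadrature--error framework. Instead it splits into three explicit cases ($k=2$; $k$ even with $4\le k\le j-3$; $k$ odd with $3\le k\le j-4$), writes $g_k^{(m)}=\Gamma(3-\nu)D_k^{(m)}$ in closed form, and bounds each case by direct binomial expansion around $2m$ (for $k=2$), by an alternating--series lemma on $(2-\nu)[a_1(2m)^{1-\nu}+a_2(2m+b)^{1-\nu}]+a_3[(2m)^{2-\nu}-(2m+b)^{2-\nu}]$ (for even $k$), and by combining two convexity inequalities from Lemma~\ref{xis0}(1)(2) together with a small auxiliary bound $8\nu-3[2^{\nu}-(2/3)^{\nu}]\ge 4\nu$ (for odd $k$). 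In each case they extract precisely the leading constant $(2-\nu)(1-\nu)\nu$ with a uniform factor $\ge \tfrac23$. Your mean--slope/curvature picture is the right heuristic and would organise the computation nicely, but as you already anticipate, the shared--node case must be pushed to the exact leading constant uniformly in $\nu$ and in the block index; the paper's brute--force expansions buy exactly that uniformity at the cost of several technical sub--lemmas, whereas your approach would need a careful Peano remainder bound valid down to the smallest admissible $j-k$.
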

\begin{proof}
For simplicity, below we only present proof for the case $j = 2m+1$, $m \geq
2$. The proof for even $j$ is very similar. The statements below rely on some
technical inequalities, which are provided in Appendix \ref{sec:70}.

(1) By the fact that the scheme \eqref{scheme} for $j=2m+1$ is exact for constant solutions, we have
\bex
\sum_{k=0}^{2m}D_{k}^{(m)}+D_{2m+1}^{(m)}=0.
\eex
According to the definition in the \eqref{cxis}, we immediately obtain the equality of (1).

(2) For any $k = 2,\cdots,2m-2$, we let $g_k^{(m)} =  \Gamma(3-\nu)
D_k^{(m)}$. According to \eqref{cxis}, we have
\[
d_k^{2m+1}=-\frac{1}{\alpha_0\Gamma(3-\nu)}g_{k}^{(m)},
\]
and we are going to prove
\beq \label{gk}
g_k^{(m)} < -(2-\nu)(1-\nu)\nu (2m+1-k)^{-\nu-1}
\eeq
by considering the following three cases separately:

Case 1: $k=2$. In this case, we claim that
\begin{align*}
g_2^{(m)} &= \frac{2-\nu}{2} [4(2m-2)^{1-\nu}+3(2m)^{1-\nu}-(2m+1)^{1-\nu} ]\\
&\quad +[2(2m-2)^{2-\nu}-3(2m)^{2-\nu}+(2m+1)^{2-\nu}] \leq
  -\frac{5(2-\nu)(1-\nu)\nu}{4} (2m-1)^{-\nu-1}.
\end{align*}
To show this, we rewrite the above inequality by applying binomial expansion on both sides:
\begin{align*}
& \sum_{j=0}^{+\infty} (-1)^j {2-\nu \choose j+3}
  \frac{j+1}{2} \left( 2^{j+4} - (-1)^j \right) \left( \frac{1}{2m} \right)^{j+1+\nu} \\
\leq{} & \sum_{j=0}^{+\infty} (-1)^j {2-\nu \choose j+3}
  \frac{5}{4} (j+3)(j+2)(j+1) \left( \frac{1}{2m} \right)^{j+1+\nu}.
\end{align*}
This inequality holds if
\beq \label{et}
2^{j+4} - (-1)^j \geq \frac{5}{2} (j+3)(j+2), \qquad \forall j = 0,1,2,\cdots.
\eeq
When $j = 0$, this can be directly verified. When $j \geq 1$, let $h(x) =
2^{x+4} - 1 - \frac{5}{2} (x+3)(x+2)$. It can be easily verified that $h(x)$ is
convex when $x \geq 1$. Using $h'(1) > 0$ and $h(1) > 0$, we conclude that $h(x)$
is positive for all $x \geq 1$. Therefore \eqref{et} holds.

Case 2: $k=4,6,\cdots,2m-2$. In this case,
\[
g_k^{(m)} = 2\tilde{g}_{k'}^{(m)},
\]
where $k' = m - k/2$ and
\[
\tilde{g}_{k'}^{(m)}=(2-\nu)[(2k')^{1-\nu}+(2k'+2)^{1-\nu}]+(2k')^{2-\nu}-(2k'+2)^{2-\nu} .
\]
If $k' > 1$, by Lemma \ref{gineq}, we can obtain
\[
\tilde{g}_{k'}^{(m)} < -(2-\nu)(2k')^{1-\nu} \left( \frac{2}{2k'} \right)^2 \frac{(1-\nu)\nu}{6} \left( 1 - \frac{\nu+1}{2} \frac{2}{2k'} \right) \leq (3-\nu){2-\nu \choose 3} (2k')^{-\nu-1}.
\]
Thus
\[
g_{k}^{(m)} < 2(3-\nu) {2-\nu \choose 3} (2m-k)^{-\nu-1} < -\frac{2}{3} (2-\nu)(1-\nu)\nu (2m+1-k)^{-\nu-1}.
\]
When $k'=1$, by Lemma \ref{xis0}(7), we get
\[
\tilde{g}_{k'}^{(m)} = 2^{1-\nu} [4-\nu - (2+\nu) 2^{1-\nu}] < \frac{1}{27} (2\nu-3)(2-\nu)(1-\nu)\nu
  < -3^{-\nu-2} (2-\nu)(1-\nu)\nu,
\]
where we have used $3^{-\nu} < (3-2\nu)/3$, which comes from the convexity of the function $3^{-\nu}$. The above inequality implies that \eqref{gk} also holds for $k = 2m-2$.

Case 3: $k = 3,5,7,\cdots,2m-3$. In this case, we have
\[
g_k^{(m)} = \frac{1}{2}\hat{g}_{\bar{k}}^{(m)},
\]
where $\bar{k} = m-(k-1)/2$, and
\[
\hat{g}_{\bar{k}}^{(m)}=-(2-\nu)(2\bar{k})^{1-\nu}
\left[\left(1-\frac{1}{\bar{k}}\right)^{1-\nu}+6+\left(1+\frac{1}{\bar{k}}\right)^{1-\nu}\right]
-2(2\bar{k})^{2-\nu}\left[\left(1-\frac{1}{\bar{k}}\right)^{2-\nu}-\left(1+\frac{1}{\bar{k}}\right)^{2-\nu}\right].
\]
Since $\bar{k}\geq2$, we can apply Lemma \ref{xis0}(1)(2) to get
\begin{align*}
\hat{g}_{\bar{k}}^{(m)} &\leq -(2-\nu)(2\bar{k})^{1-\nu}\left( 8-\frac{1-\nu}{2\bar{k}^2} \left[2^\nu-\left(\frac{2}{3}\right)^\nu\right]\right)
  -2(2\bar{k})^{2-\nu} \left[-2(2-\nu)\frac{1}{\bar{k}}+\frac{(2-\nu)(1-\nu)\nu}{3\bar{k}^3}\right]\\
&=-(2\bar{k})^{1-\nu}\frac{(2-\nu)(1-\nu)}{6\bar{k}^2}
  \left( 8\nu-3\left[2^\nu-\left(\frac{2}{3}\right)^\nu\right]\right).
\end{align*}
Let $f(\nu) = 8\nu-3[2^\nu-(2/3)^\nu]$. Then
\begin{displaymath}
f''(\nu) = 3 \left[ \left(\frac{2}{3}\right)^{\nu} \left( \log \frac{2}{3} \right)^2 - 2^{\nu} (\log 2)^2 \right] < 0.
\end{displaymath}
Therefore $f(\nu) \geq f(0)(1-\nu) + f(1)\nu = 4\nu$. Thus
\[
g_{k}^{(m)} = \frac{1}{2} \hat{g}_{\bar{k}}^{(m)} \leq -\frac{4(2-\nu)(1-\nu)\nu}{3} (2\bar{k})^{-1-\nu}
  = -\frac{4(2-\nu)(1-\nu)\nu}{3} (2m+1-k)^{-1-\nu}.
\]

(3) All the three inequalities can be directly shown as follows:
\begin{align*}
d_{2m}^{2m+1} &=\dps-\frac{D_{2m}^{(m)}}{\alpha_0}
  =\frac{2\nu2^{1-\nu}}{\Gamma(3-\nu)\alpha_0}=\frac{4\nu}{\nu+2}>0, \\
d_1^{2m+1} &= -\frac{1}{\alpha_0 \Gamma(3-\nu)} \bigg(
  -\frac{2-\nu}{2}[(2m-2)^{1-\nu}+3(2m)^{1-\nu}-4(2m+1)^{1-\nu}] \\
& \qquad -[(2m-2)^{2-\nu}-3(2m)^{2-\nu}+2(2m+1)^{2-\nu}] \bigg) > 0,
  \hspace{60pt} \text{[Due to Lemma \ref{xis0}(3)]} \\
d_0^{2m+1} &= -\frac{1}{\alpha_0 \Gamma(3-\nu)} \left(
  \frac{2-\nu}{2}[(2m)^{1-\nu}-3(2m+1)^{1-\nu}]-(2m)^{2-\nu}+(2m+1)^{2-\nu}
\right) > 0. \\
& \hspace{325pt} \text{[Due to \eqref{lem}]}
\end{align*}

(4) Since
\[
d_{2m-1}^{2m+1} = -\frac{D_{2m-1}^{(m)}}{\alpha_0}
=\frac{2^{1-\nu}}{\Gamma(3-\nu)\alpha_0}[3(2-\nu)-(6+\nu)2^{-\nu}],
\]
the sign of $d_{2m-1}^{2m+1}$ is determined by the sign of $h(\nu) :=
3(2-\nu)-(6+\nu)2^{-\nu}$, which satisfies
\[
h''(\nu) = -2^{-\nu} (\log 2)^2 \left(6 - \frac{2}{\log 2} + \nu \right) < 0, \quad
h'(0) = 6\log 2 - 4 > 0, \quad h'(1) = \frac{7}{2} (\log 2 - 1) < 0.
\]
Therefore $h(\nu)$ first increases and then decreases. By $h(0) = 0$ and $h(1)
= -1/2$, we know that $h(\nu)$ has only one zero $\nu_0$ in $(0,1)$, and
$h(\nu) > 0$ if $\nu \in (0,\nu_0)$ and $h(\nu) < 0$ if $\nu \in (\nu_0,1)$,
which agress with the conclusion of the lemma.

(5) By Lemma \ref{xis0}(8), we directly have
\begin{align*}
\frac{1}{4}(d_{2m}^{2m+1})^2+d_{2m-1}^{2m+1}
&= \frac{2}{(2+\nu)^2} \left[12-\nu^2-(12+8\nu+\nu^2)2^{-\nu} \right] \\
&> \frac{(2-\nu)(1-\nu)\nu}{8(2+\nu)} = \frac{2^{-\nu} \nu}{8\alpha_0 \Gamma(1-\nu)},
\end{align*}
which completes the proof.
\end{proof}

The above lemma shows that for $\nu < \nu_0$, all the coefficients $d_k^j$ are
positive. In this case, as mentioned previously, the stability of the scheme
can be immediately obtained from \eqref{d1ud}. However, this does not hold when
$\nu > \nu_0$. To deal with this case, below we are going to rewrite the scheme
\eqref{d1ud} as equations with all positive coefficients. To this end, we
introduce
\[
\bar{y}_j = y_j - \theta y_{j-1}, \qquad \text{for all } j \geqslant 1, \qquad \bar{y}_0 = y_0,
\]
where $\theta = 2\nu / (2 + \nu)$. In fact, we have $\theta = \frac{1}{2}
d_{j-1}^{j}$ for all $j \geq 4$. Thus the numerical solution $y_j$ can be
represented by $\bar{y}_j$ through
\beq \label{yk}
y_j = \bar{y}_j + \theta y_{j-1} = \bar{y}_j + \theta \bar{y}_{j-1} + \theta^2 y_{j-2}
  = \cdots = \sum_{k=0}^j \theta^{j-k} \bar{y}_k.
\eeq
For $j \geq 4$, we can rewrite the scheme \eqref{d1ud} by subtracting both sides by $\theta y_{j-1}$:
\beq \label{y2mp1}
\bar{y}_j + \tilde{\alpha} y_j =
  \theta y_{j-1} + \sum_{k=0}^{j-2} d_k^j y_k =\theta \bar{y}_{j-1}+
  \sum_{k=0}^{j-2} \left( \theta^{j-k} + \sum_{k'=k}^{j-2} d_{k'}^{j} \theta^{k'-k} \right) \bar{y}_k,
\eeq
where we have inserted \eqref{yk} to write the right-hand side as functions of $\bar{y}_j$. By defining
\beq\label{db1}
\bar{d}_j^j = -1, \quad \bar{d}_k^j = \theta^{j-k}+\sum_{k'=k}^{j-2}d_{k'}^j\theta^{k'-k},\quad k=0,1,\cdots,j-1, \quad j \geq 4,
\eeq
the equation \eqref{y2mp1} becomes
\beq\label{yba1}
\bar{y}_j + \tilde{\alpha} y_j =\theta \bar{y}_{j-1}+ \sum_{k=0}^{j-2} \bar{d}_k^{j} \bar{y}_k, \quad j \geq 4.
\eeq
Note that the same equation does not hold for $j = 3$. When $j = 3$, we can use the same method to rewrite $\bar{y}_3 + \tilde{\alpha} y_3$ as a linear combination of $\bar{y}_0$, $\bar{y}_1$ and $\bar{y}_2$. The result is
\beq\label{yy3}
\bar{y}_3 + \tilde{\alpha} y_3 = \bar{d}_2^3 \bar{y}_2 + \bar{d}_1^3 \bar{y}_1 + \bar{d}_0^3 \bar{y}_0,
\eeq
where
\begin{align}
\label{d23def}
\bar{d}_2^3 &= d^3_2 - \theta = \frac{\nu + 6}{\nu + 2} - \frac{4+\nu}{\nu + 2} \left(\frac{2}{3}\right)^{\nu-1}, \\
\label{d13def}
\bar{d}_1^3 &= \bar{d}^3_2 \theta + d_1^3 = \frac{1}{(2+\nu)^2} \left[
  -\nu^2 - 12 + 3(\nu^2 + 2\nu + 4) \left( \frac{2}{3} \right)^{\nu}
\right], \\
\label{d03def}
\bar{d}_0^3 &= \bar{d}_1^3 \theta + d_0^3 = \frac{1}{2} \frac{(\nu-2)^2}{(\nu+2)^3}
  \left[ 4-2\nu+3\nu \left(\frac{2}{3}\right)^{\nu} \right].
\end{align}
Additionally, we define $\bar{d}_3^3 = -1$, so that for any $j \geq 3$, we have
\beq \label{ddbar1}
d_k^j = \bar{d}_k^j - \theta \bar{d}_{k+1}^j,
  \qquad \text{for all } k = 0,1,\cdots,j-1.
\eeq
The following lemma shows that in the new ``scheme'' \eqref{yba1}, all the
coefficients are positive:

\begin{lemma}\label{bkm2}
For $0<\nu<1$, the coefficients defined in \eqref{db1} satisfy
\begin{enumerate}[{\rm (1)}]
\item $0<\bar{d}_2^3 < \theta<\dfrac{2}{3}$;
\item $\bar{d}_k^j>0,\quad k=0,1,\cdots,j-2, \quad j \geq 3$;
\item $\dps\theta+\sum_{k=0}^{j-2}\bar{d}_k^j<1, \quad j \geq 3$.
\end{enumerate}
\end{lemma}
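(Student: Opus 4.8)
The proof splits naturally into the generic case $j \ge 4$, where everything reduces to Lemma~\ref{xis1} by elementary manipulation of \eqref{db1}, and the base case $j = 3$, where one must argue directly from the closed forms \eqref{d23def}--\eqref{d03def}; note that statement (1) concerns only $j = 3$. I expect the $j = 3$ estimates to be the real obstacle, since they are tight at the endpoints of $(0,1)$.

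For (2) with $j \ge 4$, the key is to regroup \eqref{db1} by splitting off the $k' = j-2$ term and factoring out $\theta^{\,j-2-k}$:
\[
\bar{d}_k^j = \theta^{\,j-k} + d_{j-2}^j\,\theta^{\,j-2-k} + \sum_{k'=k}^{j-3} d_{k'}^j\,\theta^{\,k'-k}
 = \theta^{\,j-2-k}\,\bar{d}_{j-2}^j + \sum_{k'=k}^{j-3} d_{k'}^j\,\theta^{\,k'-k},
 \qquad 0 \le k \le j-2,
\]
where I have used $\bar{d}_{j-2}^j = \theta^2 + d_{j-2}^j$, which is the case $k = j-2$ of \eqref{db1}. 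Since $\theta = \tfrac{1}{2} d_{j-1}^j$, one has $\bar{d}_{j-2}^j = \tfrac{1}{4} (d_{j-1}^j)^2 + d_{j-2}^j > 0$ by Lemma~\ref{xis1}(5); moreover $d_{k'}^j > 0$ for all $0 \le k' \le j-3$ by Lemma~\ref{xis1}(2)--(3). As $\theta > 0$, every term on the right-hand side is nonnegative and at least one is strictly positive, so $\bar{d}_k^j > 0$. The (possibly negative) coefficient $d_{j-2}^j$ of Lemma~\ref{xis1}(4) thus never appears alone, being always absorbed into $\bar{d}_{j-2}^j$.

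For (3) I would use the exactness relation $\sum_{k=0}^{j-1} d_k^j = 1$ from Lemma~\ref{xis1}(1) together with \eqref{ddbar1}, i.e. $d_k^j = \bar{d}_k^j - \theta\bar{d}_{k+1}^j$, and the boundary values $\bar{d}_{j-1}^j = \theta$ and $\bar{d}_j^j = -1$. Summing \eqref{ddbar1} over $k = 0,\dots,j-1$ telescopes to $1 = \bar{d}_0^j + \theta + (1-\theta)\sum_{k=1}^{j-1}\bar{d}_k^j$; substituting $\bar{d}_{j-1}^j = \theta$ and eliminating $\sum_{k=1}^{j-2}\bar{d}_k^j$ yields, after simplification,
\[
\theta + \sum_{k=0}^{j-2}\bar{d}_k^j = 1 - \frac{\theta\,\bar{d}_0^j}{1-\theta}, \qquad j \ge 4,
\]
which is strictly less than $1$ because $\theta \in (0,\tfrac{2}{3})$ and $\bar{d}_0^j > 0$ by (2). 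The same computation for $j = 3$ (now using $\bar{d}_3^3 = -1$ and $\sum_{k=0}^2 d_k^3 = 1$) gives $\theta + \bar{d}_0^3 + \bar{d}_1^3 = 1 + \theta\bar{d}_1^3 - (1-\theta)\bar{d}_2^3$, so (3) at $j = 3$ reduces to the single inequality $\theta\bar{d}_1^3 < (1-\theta)\bar{d}_2^3$.

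What remains are (1) and the base cases $\bar{d}_0^3, \bar{d}_1^3 > 0$, which I would verify directly from \eqref{d23def}--\eqref{d03def}. Here $\theta = 2\nu/(2+\nu) < \tfrac{2}{3}$ is immediate from monotonicity in $\nu$, and $\bar{d}_0^3 > 0$ is immediate because the bracket $4 - 2\nu + 3\nu(2/3)^\nu$ in \eqref{d03def} exceeds $4-2\nu > 0$. The remaining facts --- $\bar{d}_2^3 > 0$, $\bar{d}_2^3 < \theta$, $\bar{d}_1^3 > 0$, and the reduced inequality $\theta\bar{d}_1^3 < (1-\theta)\bar{d}_2^3$ --- are each of the form ``an explicit rational function of $\nu$ lies above or below $(2/3)^\nu$'' and become equalities at $\nu = 0$ and/or $\nu = 1$. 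For these I would take logarithms and study the sign of the resulting auxiliary function through its second derivative, which for the bounds on $\bar{d}_2^3$ reduces to comparing two terms of the form $1/(a+\nu)^2$; the sign on $(0,1)$ then follows from strict convexity or concavity together with the endpoint values. This endpoint-tight analysis is the delicate part of the proof, since any coarser estimate degenerates precisely as $\nu \to 0$ or $\nu \to 1$.
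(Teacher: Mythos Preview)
Your argument is correct. Parts (1) and (2) match the paper's proof essentially verbatim: the paper also establishes $\bar d_{j-2}^j>0$ from Lemma~\ref{xis1}(5) and then propagates positivity down via the recurrence $\bar d_k^j=\theta\bar d_{k+1}^j+d_k^j$, and handles the $j=3$ base cases by direct inspection of \eqref{d23def}--\eqref{d03def} (invoking the auxiliary inequalities collected in Lemma~\ref{xis0}). Your treatment of~(3) for $j\ge 4$ is genuinely different and cleaner: you obtain the exact identity
\[
\theta+\sum_{k=0}^{j-2}\bar d_k^{\,j}=1-\frac{\theta\,\bar d_0^{\,j}}{1-\theta},
\]
so that~(3) follows immediately from~(2), whereas the paper instead computes $(1-\theta)Q_j=\theta(1-\theta^j)+\sum_{k=0}^{j-2}(1-\theta^{k+1})d_k^j$ and bounds this above by $1-\theta$ through a chain of inequalities using Lemma~\ref{xis1}(1)(2) and \eqref{db21}. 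Your route is shorter and explains structurally why the bound holds; the paper's route avoids the division by $1-\theta$ and keeps everything additive, which is marginally more robust but less transparent. For $j=3$ your reduction of~(3) to $\theta\bar d_1^3<(1-\theta)\bar d_2^3$ is equivalent to, but repackages, the polynomial inequality the paper isolates as Lemma~\ref{xis0}(6); either formulation needs the same endpoint-tight elementary analysis you outline.
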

\begin{proof}
(1) Using the fact that $(2/3)^{\nu-1}$ is a convex function, we have $(2/3)^{\nu-1} < (3-\nu)/2$. Therefore by \eqref{d23def},
\[
\bar{d}_2^3 > \frac{\nu+6}{\nu+2} - \frac{4+\nu}{\nu+2} \frac{3-\nu}{2} = \frac{\nu(\nu+3)}{2(\nu+2)} > 0.
\]
The inequality $\bar{d}_2^3 < \theta$ is a direct result of Lemma \ref{xis0}(5), since
\[
\bar{d}_2^3 - \theta = \frac{1}{2+\nu} \left[6-\nu - \left(2 + \frac{\nu}{2}\right) 2^{\nu} 3^{1-\nu} \right].
\]
The fact that $\theta < 2/3$ is obvious since $\theta = 2\nu/(\nu + 2)$.

(2) When $j = 3$, by Lemma \ref{xis0}(4) and \eqref{d13def}, we immediately see that $\bar{d}_1^3 > 0$. The fact that $\bar{d}_0^3 > 0$ can be observed from
\[
\bar{d}_0^3 = \frac{1}{2} \frac{(\nu-2)^2}{(\nu+2)^3}
  \left[ 4-2\nu+3\nu \left(\frac{2}{3}\right)^{\nu} \right]
> \frac{1}{2} \frac{(\nu-2)^2}{(\nu+2)^3} (4-2\nu+2\nu) > 0.
\]
When $j > 3$, by \eqref{db1} and Lemma \ref{xis1}\,(5), we get
\beq\label{db21}
\bar{d}_{j-2}^{j}=\theta^{2}+d_{j-2}^{j}=\frac{1}{4}(d_{j-1}^j)^2+d_{j-2}^j>0.
\eeq
For other cases, we notice that \eqref{db1} implies the following recurrence relation of $\bar{d}_k^j$:
\beq\label{recur}
\bar{d}_k^j=\theta\bar{d}_{k+1}^j+d_{k}^j, \qquad k=0,1,\cdots,j-3.
\eeq
Since $d_k^j > 0$ for all $k = 0,1,\cdots,j-3$, the equation \eqref{recur} shows that $\bar{d}_{k+1}^j > 0$ implies $\bar{d}_k^j > 0$. Thus, by mathematical induction with the base case \eqref{db21}, we see that $\bar{d}_k^j>0$ for all $k=0,1,\cdots,j-2$.

(3) When $j = 3$, direct calculation yields
\[
\theta + \bar{d}_1^3 + \bar{d}_0^3 = 1 + \frac{1}{2(2+\nu)^3}
  \left[ -2\nu^3+12\nu^2-56\nu-48+3\left(\frac{2}{3}\right)^\nu(3\nu^3+4\nu^2+20\nu+16) \right].
\]
By Lemma \ref{xis0}(6), we see that the above quantity is less than $1$. When $j \geq 4$, we let $Q_j$ be the left-hand side of the inequality. It can be observed from \eqref{db1} that
\[
(1-\theta)Q_j=\theta(1-\theta^j) + \sum_{k=0}^{j-2} (1 - \theta^{k+1}) d_k^j.
\]
According to Lemma \ref{xis1}\,(2), we have
\[
(1-\theta)Q_j <\theta(1-\theta^j) + \sum_{k=0}^{j-3} d_k^j + (1 - \theta^{j-1}) d_{j-2}^j
  = \theta + \sum_{k=0}^{j-2} d_k^j - \theta^{j-1} (\theta^2 + d_{j-2}^j),
\]
where we have used \eqref{db21} at the last step. Now we apply Lemma \ref{xis1}\,(1) to get
\bex
(1-\theta)Q_j<\theta + 1 - d_{j-1}^{j} = \theta + 1 - 2\theta = 1-\theta,
\eex
which indicates $Q_j < 1$.
\end{proof}

Base on this lemma, we can show the stability for the numerical solution $\bar{y}_j$:
\begin{lemma}\label{b3km2}
For $0<\nu<1$, we have
\beq\label{yyb1234}
\bar{y}_{j}^2+\tilde{\alpha}y_{j}^2\leq y_0^2, \qquad \text{for all } j > 0.
\eeq
\end{lemma}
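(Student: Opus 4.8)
I would prove \eqref{yyb1234} by strong induction on $j$, treating $E_j := \bar y_j^2 + \tilde\alpha y_j^2$ as a discrete energy and abbreviating $w_j := \bar y_j + \tilde\alpha y_j$; since $\bar y_j = y_j - \theta y_{j-1}$ one has $w_j = (1+\tilde\alpha)y_j - \theta y_{j-1}$, and $w_j$ is exactly the left-hand side of \eqref{yba1} (when $j\ge 4$) and of \eqref{yy3} (when $j=3$). Two ingredients would drive the argument. The first is the algebraic identity obtained by completing the square in $y_j$,
\[
w_j^2 - E_j = \tilde\alpha\bigl[(1+\tilde\alpha)y_j^2 - 2\theta y_j y_{j-1}\bigr]
  = \tilde\alpha(1+\tilde\alpha)\Bigl(y_j - \tfrac{\theta}{1+\tilde\alpha}\,y_{j-1}\Bigr)^2 - \frac{\tilde\alpha\,\theta^2}{1+\tilde\alpha}\,y_{j-1}^2 ,
\]
which yields the one-sided bound $E_j \le w_j^2 + \theta^2\,(\tilde\alpha y_{j-1}^2)$ after using $\tilde\alpha/(1+\tilde\alpha) < \tilde\alpha$. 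The second is that, by Lemma \ref{bkm2}, the right-hand sides of \eqref{yba1} and \eqref{yy3} are nonnegative combinations of the $\bar y_k$ whose weights sum to some $\sigma_j < 1$; writing \eqref{yba1} as $w_j = \theta\bar y_{j-1} + \sum_{k=0}^{j-2}\bar d_k^j\bar y_k$, with all weights positive (Lemma \ref{bkm2}(2)) and $\sigma_j := \theta + \sum_{k=0}^{j-2}\bar d_k^j < 1$ (Lemma \ref{bkm2}(3)), the Cauchy--Schwarz inequality gives
\[
w_j^2 \le \sigma_j\Bigl(\theta\,\bar y_{j-1}^2 + \sum_{k=0}^{j-2}\bar d_k^j\,\bar y_k^2\Bigr).
\]

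For the inductive step I would fix $j\ge 4$ and assume \eqref{yyb1234} for all indices $0<k<j$; this, together with $\bar y_0 = y_0$, supplies $\bar y_k^2 \le y_0^2$ for $0\le k\le j-1$ and $\bar y_{j-1}^2 + \tilde\alpha y_{j-1}^2 \le y_0^2$. Combining the three displays and using $\sum_{k=0}^{j-2}\bar d_k^j = \sigma_j - \theta$ would give
\[
E_j \le \sigma_j\theta\,\bar y_{j-1}^2 + \sigma_j(\sigma_j - \theta)\,y_0^2 + \theta^2\,\tilde\alpha y_{j-1}^2 .
\]
Since every $\bar d_k^j$ is positive we have $\theta < \sigma_j$, hence $\theta^2 \le \sigma_j\theta$, so the first and last terms merge into $\sigma_j\theta(\bar y_{j-1}^2 + \tilde\alpha y_{j-1}^2) \le \sigma_j\theta\,y_0^2$, whence $E_j \le \sigma_j^2\,y_0^2 \le y_0^2$, which closes the induction.

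The base cases I would handle separately. For $j=3$ the same computation applies with \eqref{yy3} in place of \eqref{yba1}: now $w_3 = \bar d_2^3\bar y_2 + \bar d_1^3\bar y_1 + \bar d_0^3\bar y_0$ has positive weights (Lemma \ref{bkm2}(1),(2)) summing to $\sigma_3 = \bar d_0^3 + \bar d_1^3 + \bar d_2^3 < \theta + \bar d_1^3 + \bar d_0^3 < 1$ (Lemma \ref{bkm2}(1),(3)); because the weight of $\bar y_2$ is $\bar d_2^3$ rather than $\theta$, one cannot combine terms as before, so I would split into the cases $\sigma_3\bar d_2^3 \le \theta^2$ and $\sigma_3\bar d_2^3 > \theta^2$, obtaining $E_3 \le [\theta^2 + \sigma_3(\bar d_1^3 + \bar d_0^3)]\,y_0^2 < [1-\theta(1-\theta)]\,y_0^2$ in the first and $E_3 \le \sigma_3^2\,y_0^2$ in the second, both strictly below $y_0^2$. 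For $j=1,2$ the two equations of \eqref{scheme} form a $2\times 2$ linear system for $(y_1,y_2)$ with the explicit coefficients $\widehat D_i,\widetilde D_i$; solving it and substituting $\bar y_1 = y_1 - \theta y_0$, $\bar y_2 = y_2 - \theta y_1$ reduces \eqref{yyb1234} for $j=1,2$ to inequalities in $\nu\in(0,1)$ and $\tilde\alpha>0$ that can be checked directly.

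The hard part is the inductive step, for two reasons. First, the reformulation \eqref{yba1} expresses $w_j = \bar y_j + \tilde\alpha y_j$, not $\bar y_j$ itself, as a positive combination of the $\bar y_k$, so a naive recursion of the form $E_j \le \sigma_j E_{j-1}$ is unavailable; the passage from $w_j^2$ back to $E_j$ costs precisely the defect $\tfrac{\tilde\alpha\theta^2}{1+\tilde\alpha}y_{j-1}^2$. The key realization I would exploit is that this defect is absorbed for free: because $\theta \le \sigma_j$ it merges with the $\bar y_{j-1}^2$ contribution of the Cauchy--Schwarz bound into the single quantity $\sigma_j\theta\,E_{j-1}$, at which point $\sigma_j < 1$ supplies the contraction. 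Second, the argument genuinely requires the change of variables to $\bar y_j$: by Lemma \ref{xis1}(4) the coefficient $d_{j-2}^j$ in \eqref{d1ud} changes sign at $\nu=\nu_0$, so Cauchy--Schwarz cannot be applied to \eqref{d1ud} directly, and the positivity $\bar d_k^j>0$ together with $\sigma_j<1$ (Lemma \ref{bkm2}, via the technical estimates of Lemma \ref{xis1} and the appendix) is exactly what rescues the estimate.
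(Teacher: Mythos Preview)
Your proposal is correct and follows essentially the same strategy as the paper: strong induction on $j$, direct verification for $j=1,2$ by solving the $2\times 2$ system explicitly, and for $j\ge 3$ an energy argument built on the positive-coefficient reformulation \eqref{yba1}--\eqref{yy3} together with Lemma~\ref{bkm2}. The only technical difference is that the paper multiplies \eqref{yba1} by $2\bar y_j$, invokes the identity $2y_j\bar y_j = y_j^2 + \bar y_j^2 - \theta^2 y_{j-1}^2$, and applies $2ab\le a^2+b^2$ term by term (and for $j=3$ uses $\bar d_2^3<\theta$ from Lemma~\ref{bkm2}(1) instead of your case split), whereas you complete the square in $w_j^2-E_j$ and apply Jensen globally; the two routes are equivalent, and yours even yields the slightly sharper $E_j\le\sigma_j^2 y_0^2$.
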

\begin{proof}
We first prove \eqref{yyb1234} for $j=1$. When $f(x,y) = -\lambda y$, the
scheme \eqref{scheme} for the first two steps is
\beq \label{y1y2}
\begin{aligned}
\widehat{D}_0y_0+\widehat{D}_1y_1+\widehat{D}_2y_2&=-\beta_0y_1, \\
\widetilde{D}_0y_0+\widetilde{D}_1y_1+\widetilde{D}_2y_2&=-\beta_0y_2,
\end{aligned}
\eeq
where $\beta_0 = \lambda \Delta x^{\nu}$. By solving the linear system, we can get
\beq \label{bary1}
\bar{y}_{1}^2+\tilde{\alpha}y_{1}^2 =
  \frac{\tilde{a}_0+\tilde{a}_1 \Gamma(3-\nu) \beta_0+\tilde{a}_2
    \Gamma(3-\nu)^2 \beta_0^2+\tilde{a}_3 \Gamma(3-\nu)^3 \beta_0^3+\tilde{a}_4 \Gamma(3-\nu)^4 \beta_0^4}
{\tilde{b}_0+\tilde{b}_1\Gamma(3-\nu) \beta_0+\tilde{b}_2\Gamma(3-\nu)^2 \beta_0^2+\tilde{b}_3\Gamma(3-\nu)^3 \beta_0^3+\tilde{b}_4\Gamma(3-\nu)^4 \beta_0^4}y_0^2,
\eeq
where the coefficients satisfy
\begin{displaymath}
\begin{aligned}
\tilde{a}_0 &= 16(2-\nu)^4 > 0, \\
\tilde{a}_1 &= 8(\nu-2)^2 \left[ 2^{\nu} (5 \nu ^2-8 \nu +12)-4 (\nu ^2+2 \nu) \right] \\
  & \geq 8(\nu-2)^2 \left[(1 + \nu \log 2) (5 \nu ^2-8 \nu +12)-4 (\nu ^2+2 \nu) \right] > 0, \\
\tilde{a}_2 &= 16 \nu ^2 (\nu +2)^2-2^{\nu+3}\nu (5 \nu ^3+4 \nu ^2-28 \nu +32)
  + 4^{\nu} (25 \nu^4-76 \nu ^3+148 \nu ^2-256 \nu +192) \\
  & \geq 16 \nu ^2 (\nu +2)^2-8(1+\nu)\nu (5 \nu ^3+4 \nu ^2-28 \nu +32) \\
  & \qquad + (1 + \nu \log 4) (25 \nu^4-76 \nu ^3+148 \nu ^2-256 \nu +192) > 0, \\
\tilde{a}_3 &= 2^{\nu} \left[ 32 \nu ^2 (\nu+2)-2^{\nu+3} \nu (5 \nu ^2-10 \nu +8)+4^{\nu} (\nu +2) (4-3 \nu)^2 \right] \\
  & \geq 2^{\nu} \left[ 32 \nu ^2 (\nu+2)-8(1+\nu) \nu (5 \nu ^2-10 \nu +8)+(1 + \nu \log 4) (\nu +2) (4-3 \nu)^2 \right] > 0,\\
\tilde{a}_4 &= 4^{\nu+2} \nu^2 > 0,
\end{aligned}
\end{displaymath}
and
\begin{displaymath}
\begin{aligned}
\tilde{b}_0 - \tilde{a}_0 &= 128\nu (2-\nu)^2 > 0, \\
\tilde{b}_1 - \tilde{a}_1 &= 8(2-\nu) [2^{\nu}(\nu^3-30\nu^2+28\nu-8)-2(\nu+2)(\nu^2-8\nu-4)] \\
  & \geq 8(2-\nu) 2^{\nu} [(\nu^3-30\nu^2+28\nu-8)-2(\nu^2-8\nu-4)] > 0, \\
\tilde{b}_2 - \tilde{a}_2 &= 4^{\nu} (-9 \nu ^4+108 \nu ^3-196 \nu ^2+192 \nu -128) \\
  & \quad +2^{\nu +3} (3 \nu ^4-8 \nu ^3-44 \nu ^2+48 \nu +32)-4 (\nu -2) (3 \nu +2) (\nu +2)^2 \\
  & \geq (1 + \nu \log 4)(-9 \nu ^4+108 \nu ^3-196 \nu ^2+192 \nu -128) \\
  & \quad + 8(3 \nu ^4-8 \nu ^3-44 \nu ^2+48 \nu +32) -4 (\nu -2) (3 \nu +2) (\nu +2)^2 > 0, \\
\tilde{b}_3 - \tilde{a}_3 &= 2^{\nu} \left[2^{\nu +3} (3 \nu ^3-16 \nu ^2+8 \nu +8)-4^{\nu} (\nu +2) (4-3 \nu )^2-8 (\nu -2) (\nu +2) (3 \nu +2)\right] \\
  &> 2^{\nu} \left[8 (3 \nu ^3-16 \nu ^2+8 \nu +8)-4 (\nu +2) (4-3 \nu )^2-8 (\nu -2) (\nu +2) (3 \nu +2)\right] > 0, \\
\tilde{b}_4 - \tilde{a}_4 &= 4^{1+\nu} (4 + 4\nu - 3\nu^2) > 0.
\end{aligned}
\end{displaymath}
In the above derivation, we have omitted the details on the determination of signs for all polynomials of degree less than or equal to $5$, which is elementary but tedious. Since $\beta_0 = \lambda \Delta x^{\nu} > 0$, these inequalities show that the coefficient of $y_0^2$ on the right-hand side of \eqref{bary1} is less than 1. Therefore $\bar{y}_1^2 + \tilde{\alpha} y_1^2 \leq y_0^2$.

To show \eqref{yyb1234} for $j=2$, we also solve the linear system \eqref{y1y2} to get
\[
\bar{y}_{2}^2+\tilde{\alpha}y_{2}^2 =
  \frac{\check{a}_0+\check{a}_1 \Gamma(3-\nu) \beta_0+\check{a}_2
    \Gamma(3-\nu)^2 \beta_0^2+\check{a}_3 \Gamma(3-\nu)^3 \beta_0^3}
{\check{b}_0+\check{b}_1\Gamma(3-\nu) \beta_0+\check{b}_2\Gamma(3-\nu)^2 \beta_0^2+\check{b}_3\Gamma(3-\nu)^3 \beta_0^3+\check{b}_4\Gamma(3-\nu)^4 \beta_0^4}y_0^2,
\]
and it can be similarly shown that
\[
\check{b}_l > \check{a}_l > 0, \quad l = 0,1,2,3,
  \qquad \text{and} \qquad \check{b}_4 > 0.
\]
Therefore \eqref{yyb1234} also holds for $k = 2$.

Next, we prove \eqref{yyb1234} for $j=3$.
Multiplying by $2\bar{y}_{3}$ on both sides of \eqref{yy3}, and using the identity
\beq\label{hds}
2y_j\bar{y}_{j}=(y_j+y_j)\bar{y}_{j}=(y_j+\bar{y}_{j}+\theta y_{j-1})(y_j-\theta y_{j-1})
=y_j^2+\bar{y}_{j}^2-\theta^2y_{j-1}^2,
\eeq
we get
\beq\label{2y3}
2\bar{y}_{3}^2+\tilde{\alpha} y_3^2+\tilde{\alpha}\bar{y}_{3}^2-\tilde{\alpha}\theta^2y_{2}^2
\leq\bar{d}_{2}^{3}\bar{y}_2^2+\bar{d}_{1}^{3}\bar{y}_1^2+\bar{d}_{0}^{3}y_0^2
+(\bar{d}_{2}^{3}+\bar{d}_{1}^{3}+\bar{d}_{0}^{3})\bar{y}_{3}^2.
\eeq
Applying Lemma \ref{bkm2}\,(1)(3) and the result \eqref{yyb1234} for $j=1,2$ to
the above inequality, we obtain
\[
\bar{y}_{3}^2+\tilde{\alpha} y_3^2
\leq\theta(\bar{y}_2^2+\tilde{\alpha}y_2^2)
+\bar{d}_{1}^{3}(\bar{y}_1^2+\tilde{\alpha}y_1^2)+\bar{d}_{0}^{3}y_0^2
\leq (\theta+\bar{d}_{1}^{3}+\bar{d}_{0}^{3}) y_0^2 \leq y_0^2.
\]
Therefore, we can obtain \eqref{yyb1234} for $j=3$.

When $j \geq 4$, we apply mathematical induction and assume that the result
holds for all cases up to $j - 1$. To show \eqref{yyb1234}, we multiply both
sides of \eqref{yba1} by $2\bar{y}_j$ and apply the identity \eqref{hds},
resulting in the following inequality:
\begin{align*}
2\bar{y}_j^2+\tilde{\alpha}y_j^2+\tilde{\alpha}\bar{y}_j^2-\tilde{\alpha}\theta^2y_{j-1}^2
& \leq \theta \bar{y}_{j-1}^2+\sum_{k=0}^{j-2}\bar{d}_k^j\bar{y}_k^2
+ \left(\theta+\sum_{k=0}^{j-2} \bar{d}_k^j\right)\bar{y}_j^2
\leq \theta \bar{y}_{j-1}^2+\sum_{k=0}^{j-2}\bar{d}_k^j\bar{y}_k^2 + \bar{y}_j^2,
\end{align*}
where Lemma \ref{bkm2}\,(5) has been applied at the last step. Some rearrangement yields
\[
\bar{y}_j^2+\tilde{\alpha}y_j^2
\leq\theta(\bar{y}_{j-1}^2+\tilde{\alpha}y_{j-1}^2)
+\sum_{k=1}^{j-2}\bar{d}_k^j(\bar{y}_{k}^2+\tilde{\alpha}y_{k}^2)+\bar{d}_0^j y_0^2.
\]
Now one can apply the inductive hypothesis to get
\[
\bar{y}_j^2+\tilde{\alpha}y_j^2
\leq \left( \theta+\sum_{k=1}^{j-2}\bar{d}_k^j+\bar{d}_0^j \right) y_0^2 \leq y_0^2.
\]
By the principle of mathematical induction, the inequality \eqref{yyb1234}
holds for all $j > 0$. \end{proof}

By now, we are ready to show the stability of the original numerical solution $y_k$:
\begin{theorem}\label{stab1}
The scheme \eqref{scheme} for the equation \eqref{pb} with $f$ given in \eqref{eigenpb} is stable in the sense that
\beq\label{yddwx}
|y_k|\le \frac{2+\nu}{2-\nu} |y_0|,\quad \text{for all } k > 0.
\eeq
\end{theorem}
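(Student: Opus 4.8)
The plan is to read off the theorem directly from Lemma~\ref{b3km2} combined with the geometric representation \eqref{yk} of the numerical solution in terms of the auxiliary quantities $\bar{y}_k$. The first step is to note that Lemma~\ref{b3km2} gives, for every $j>0$,
\[
\bar{y}_j^2 \le \bar{y}_j^2 + \tilde{\alpha} y_j^2 \le y_0^2,
\]
so that $|\bar{y}_j| \le |y_0|$; together with the definition $\bar{y}_0 = y_0$ this yields the uniform bound $|\bar{y}_k| \le |y_0|$ for all $k \ge 0$.

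The second step is to insert this bound into \eqref{yk}. Since $\theta = 2\nu/(2+\nu) \in (0,1)$ for $0<\nu<1$, the triangle inequality and summation of a geometric series give
\[
|y_j| = \left| \sum_{k=0}^j \theta^{j-k} \bar{y}_k \right|
  \le |y_0| \sum_{k=0}^j \theta^{j-k}
  \le |y_0| \sum_{l=0}^{\infty} \theta^l
  = \frac{|y_0|}{1-\theta}.
\]
It remains only to observe the elementary identity $1-\theta = (2-\nu)/(2+\nu)$, which turns the right-hand side into $\frac{2+\nu}{2-\nu}|y_0|$ and establishes \eqref{yddwx}.

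Since all the substantive work — the sign and size analysis of the coefficients $d_k^j$ in Lemma~\ref{xis1}, the positivity of the reformulated coefficients $\bar{d}_k^j$ in Lemma~\ref{bkm2}, and the energy-type estimate of Lemma~\ref{b3km2} — has already been carried out, there is essentially no obstacle remaining in this final step: the only points to verify are that $\theta<1$ and that $1/(1-\theta)$ equals the claimed constant, both immediate from $\theta = 2\nu/(2+\nu)$. If anything, the only mild care needed is to record that the bound holds with strict inequality for finite $j$ (the partial geometric sum is $< 1/(1-\theta)$), and that the stated non-strict inequality is therefore certainly valid, including for the first two steps $j=1,2$ which are covered by Lemma~\ref{b3km2} through the explicit rational-function estimates established there.
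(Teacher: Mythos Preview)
Your proposal is correct and follows exactly the paper's own argument: extract $|\bar{y}_j|\le|y_0|$ from Lemma~\ref{b3km2}, substitute into the geometric representation \eqref{yk}, and sum the geometric series using $1-\theta=(2-\nu)/(2+\nu)$. There is nothing to add or correct.
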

\begin{proof}
By Lemma \ref{b3km2}, we can get
\beq\label{ybmj12}
|\bar{y}_j|\le|y_{0}|, \quad \text{for all } j > 0.
\eeq
Inserting this inequality to \eqref{yk} yields
\[
|y_{k}| \le \sum_{j=0}^{k}\theta^{k-j} |\bar{y}_j|
  \leq \sum_{j=0}^{k}\theta^{k-j} |y_0| \leq \frac{1}{1-\theta} |y_0|
  = \frac{2+\nu}{2-\nu} |y_0|,
\]
which completes the proof of the stability.
\end{proof}

\section{Convergence analysis}\label{sec:4}

Our convergence analysis follows the general idea of the recent work
\cite{liao2019dis}, which is parallel to the proof of $L^2$-stability of the
fractional ODE \eqref{pb} with respect to the initial data. However, our
analysis has to deal with the special processing of the first two time steps
and the non-positivity of the coefficients in the numerical scheme. For the
sake of clarity, we decompose our analysis into the following three
subsections. Before that, we make the following assumptions:
\begin{description}
\item[(H1)] The exact solution $y \in C^3([0,T])$;
\item[(H2)] The right-hand side $f(x,y)$ is Lipschitz continuous with respect
  to $y$:
  \beq \label{Lipschitz}
  |f(x,y^*) - f(x,y^{**})| < L |y^* - y^{**}|, \qquad \text{for any $y^*$ and $y^{**}$}.
  \eeq
\end{description}
In the following analysis, we will restrict ourselves to the numerical solution
exactly on $[0,T]$. Precisely, we suppose $2N \Delta x = T$ for a positive
integer $N$. For convenience, we define the numerical error by
\[
e_j = y(x_j) - y_j, \qquad j = 0,1,\cdots,2N,
\]
and $e_0 = 0$. Furthermore, by the hypothese (H2), we can find $L_j$ for $j =
1,2,\cdots,2N$, such that
\beq\label{fxiyi}
f(x_j,y(x_j))-f(x_j,y_j)=L_j(y(x_j)-y_j) \qquad \text{and} \qquad |L_j| \leq L.
\eeq

\subsection{Reformulation of the numerical scheme} \label{sec:reformulation}
Our first step is to rewrite our numerical scheme to better match the form of
the Caputo derivative \eqref{caputo}. To this end, we introduce the notation
\[
\nabla \psi_k = \psi_k - \psi_{k-1}, \qquad k > 0
\]
for any quantity $\psi_k$, as corresponds to the first-order derivative
appearing in the definition of the Caputo derivative. Furthermore, for any $n
\geq 2$ and $k = 0,1,\cdots,n-1$, we define
$\bar{B}_k^n$ as
\beq \label{Bbar}
\bar{B}_0^n = \Delta x^{-\nu} \alpha_0, \qquad
\bar{B}_{n-k}^n = \bar{B}_{n-k-1}^n - \Delta x^{-\nu} \alpha_0 \bar{d}_k^n, \quad k = 1,\cdots,n-1.
\eeq
By \eqref{ddbar1}, we have
\begin{align*}
{}_0D_{\Delta x}^{\nu}y_n &= \Delta x^{-\nu} \alpha_0 y_n - \Delta x^{-\nu} \alpha_0 \sum_{k=0}^{n-1} d_k^n y_k
  = \Delta x^{-\nu} \alpha_0 y_n - \Delta x^{-\nu} \alpha_0 \sum_{k=0}^{n-1} (\bar{d}_k^n - \theta \bar{d}_{k+1}^n) y_k \\
&= \Delta x^{-\nu} \alpha_0 (y_n + \theta \bar{d}_n^n y_{n-1}) -
  \Delta x^{-\nu} \alpha_0 \sum_{k=1}^{n-1} \bar{d}_k^n (y_k - \theta y_{k-1}) - \Delta x^{-\nu} \alpha_0 \bar{d}_0^n y_0 \\
&= \Delta x^{-\nu} \alpha_0 \bar{y}_n -
  \Delta x^{-\nu} \alpha_0 \sum_{k=1}^{n-1} \bar{d}_k^n \bar{y}_k - \Delta x^{-\nu} \alpha_0 \bar{d}_0^n y_0.
\end{align*}
Now we can apply the definition of $\bar{B}_k^n$ given in \eqref{Bbar} to rewrite the discrete fractional derivative as
\beq \label{ybar}
\begin{split}
{}_0D_{\Delta x}^{\nu}y_n
&= \bar{B}_0^n \bar{y}_n + \sum_{k=1}^{n-1} (\bar{B}_{n-k}^n - \bar{B}_{n-k-1}^n) \bar{y}_k - \Delta x^{-\nu} \alpha_0 \bar{d}_0^n y_0 \\
&= \sum_{k=1}^n \bar{B}_{n-k}^n \nabla \bar{y}_k + (\bar{B}_{n-1}^n - \Delta x^{-\nu} \alpha_0 \bar{d}_0^n) y_0.
\end{split}
\eeq
Similarly, if we define $\bar{y}(x) = y(x) - \theta y(x - \Delta x)$ and $\bar{y}(0) = y_0$, we have
\beq \label{ybar1}
{}_0D_{\Delta x}^{\nu}y(x_n) = \sum_{k=1}^n \bar{B}_{n-k}^n \nabla \bar{y}(x_k)
  + (\bar{B}_{n-1}^n - \Delta x^{-\nu} \alpha_0 \bar{d}_0^n) y_0.
\eeq
Our analysis will be based on such a form of the discrete Caputo derivative. The following lemma provides the lower bounds and the monotonicity of the coefficients.

\begin{lemma}\label{pib}
The coefficients $\bar{B}_k^n$ satisfy $\bar{B}_0^n >\bar{B}_1^n >\cdots>\bar{B}_{n-1}^n>0$ and
\beq\label{dyxj1}
\bar{B}_{k}^{n}\geq\frac{1}{\pi_B\Delta x}\int_{x_{n-k-1}}^{x_{n-k}}\omega_{1-\nu}(x_n-s)ds
=\frac{1}{\pi_B\Delta x^{\nu}\Gamma(2-\nu)}[(k+1)^{1-\nu}-k^{1-\nu}],
\eeq
where $\pi_B=9$.
\end{lemma}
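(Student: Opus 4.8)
The plan is to establish the two assertions of Lemma~\ref{pib} separately, relying on the coefficient bounds already proven in Lemmas~\ref{xis1} and~\ref{bkm2}. First I would unfold the recursive definition \eqref{Bbar}: telescoping gives $\bar{B}_{n-k}^n = \Delta x^{-\nu} \alpha_0 \bigl(1 - \sum_{k'=k}^{j-1}\bar{d}_{k'}^n\bigr)$-type expressions, so that $\bar{B}_{n-k}^n - \bar{B}_{n-k-1}^n = -\Delta x^{-\nu}\alpha_0 \bar{d}_k^n$. Since Lemma~\ref{bkm2}\,(2) gives $\bar{d}_k^n > 0$ for $k = 0,1,\cdots,n-2$, the sequence $\bar{B}_0^n > \bar{B}_1^n > \cdots > \bar{B}_{n-1}^n$ is strictly decreasing. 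It remains to show $\bar{B}_{n-1}^n > 0$; summing the increments from $\bar{B}_0^n = \Delta x^{-\nu}\alpha_0$ down, this amounts to $1 - \sum_{k=0}^{n-2}\bar{d}_k^n > 0$, which follows from Lemma~\ref{bkm2}\,(3) (noting $\theta > 0$, so $\sum_{k=0}^{n-2}\bar{d}_k^n < 1 - \theta < 1$). For $n = 2$ and $n = 3$ one checks the few coefficients directly using \eqref{d23def}--\eqref{d03def} and Lemma~\ref{bkm2}\,(1).

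For the lower bound \eqref{dyxj1}, the strategy is to compare each $\bar{B}_k^n$ against the average of the kernel $\omega_{1-\nu}(x_n - s)$ over the interval $[x_{n-k-1}, x_{n-k}]$. I would use the facts that each $\bar{d}_k^n$ is, up to the positive factor $\theta^{\cdot}$-weighted sums, built from the $d_k^n$, and that by Lemma~\ref{xis1}\,(2), $d_k^n \geq \frac{2\nu}{3\alpha_0\Gamma(1-\nu)}(n-k)^{-\nu-1}$ for the interior indices. The key observation is that $(n-k)^{-\nu-1}$ is comparable (up to an absolute constant) to $\frac{1}{\Delta x}\int_{x_{n-k-1}}^{x_{n-k}}\omega_{1-\nu}'(x_n - s)\,ds$, i.e. to the discrete second difference of the kernel integral. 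Writing $\bar{B}_k^n$ as a partial sum of the $\Delta x^{-\nu}\alpha_0\bar{d}_{k'}^n$ and summing these kernel lower bounds, the sum telescopes into $\frac{1}{\pi_B\Delta x}\int_{x_{n-k-1}}^{x_{n-k}}\omega_{1-\nu}(x_n - s)\,ds$; the constant $\pi_B = 9$ absorbs the loss factors $\tfrac{2}{3}$, the geometric factor $\tfrac{1}{1-\theta}$, and the constant in the comparison between $(n-k)^{-\nu-1}$ and the kernel increment. The explicit evaluation $\int_{x_{n-k-1}}^{x_{n-k}}\omega_{1-\nu}(x_n - s)\,ds = \frac{\Delta x^{1-\nu}}{\Gamma(2-\nu)}[(k+1)^{1-\nu} - k^{1-\nu}]$ is a direct antiderivative computation using \eqref{omega}.

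The main obstacle I anticipate is the careful bookkeeping in the second part: the coefficients $\bar{d}_k^n$ are not themselves of a clean closed form but are defined through the recurrence $\bar{d}_k^n = \theta\bar{d}_{k+1}^n + d_k^n$ together with the exceptional values $\bar{d}_{n-1}^n = 2\theta - \theta = \theta$ and $\bar{d}_{n-2}^n = \theta^2 + d_{n-2}^n$. One must propagate the lower bound on $d_k^n$ through this recurrence — each $\bar{d}_k^n$ is a geometric-type sum $\bar{d}_k^n = \sum_{k'\ge k}\theta^{k'-k} d_{k'}^n + (\text{boundary terms})$ — and then sum over $k$ to build $\bar{B}_k^n$, all the while keeping track of which indices are "interior" (so Lemma~\ref{xis1}\,(2) applies) versus the handful near $k = n-1, n-2, n-3$ and $k = 0, 1$, which must be treated by the separate sign statements in Lemma~\ref{xis1}\,(3)--(5) and Lemma~\ref{bkm2}. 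Verifying that these boundary contributions do not spoil the uniform constant $\pi_B = 9$, and that the comparison holds for the small values $n = 2, 3$ as well, is the delicate part; the rest is routine estimation. I would also double-check whether the claimed comparison constant should really be $9$ or whether a slightly more generous constant is needed, and state it accordingly.
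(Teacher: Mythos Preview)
Your plan is essentially the paper's approach: monotonicity from the positivity of $\bar{d}_k^n$ (Lemma~\ref{bkm2}(2)), positivity of $\bar{B}_{n-1}^n$ from Lemma~\ref{bkm2}(3), and the lower bound by summing the pointwise bounds on $\bar{d}_j^n$ coming from Lemma~\ref{xis1}(2),(5). The paper organizes the lower-bound argument as a clean five-way case split: $k=0$ (direct); $n=3$ (direct computation of $\bar{B}_1^3,\bar{B}_2^3$); $k=n-1$ (via $\bar{B}_{n-1}^n>\Delta x^{-\nu}\alpha_0 d_0^n$ and Lemma~\ref{gineq}); $k=n-2$ (from the previous case by monotonicity); and the generic range $1\le k\le n-3$.

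One simplification relative to your sketch: there is no need to propagate the bound through the geometric recurrence or track a factor $\tfrac{1}{1-\theta}$. Since every term in $\bar{d}_j^n=\theta\bar{d}_{j+1}^n+d_j^n$ is nonnegative, one simply drops to $\bar{d}_j^n\ge d_j^n$ and bounds
\[
\bar{B}_k^n=\bar{B}_{n-1}^n+\Delta x^{-\nu}\alpha_0\sum_{j=1}^{n-1-k}\bar{d}_j^n
\;\ge\;\bar{B}_{n-1}^n+\frac{\nu}{4\Delta x^{\nu}\Gamma(1-\nu)}\sum_{j=2}^{n-1-k}(n-j)^{-\nu-1},
\]
then replaces the sum by the integral $\int_1^{n-1-k}(n-x)^{-\nu-1}\,dx$. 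That is where the ``telescoping'' you mention actually occurs, and the constant $\pi_B=9$ is determined entirely by the boundary cases $k=n-1,n-2$, not by any geometric-series loss.
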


\begin{proof}
The monotonicity of the coefficients $\bar{B}_k^n$ is obvious by the definition \eqref{Bbar} and the positivity of $\bar{d}_k^n$, and below we focus only on the proof of \eqref{dyxj1}.

When $k = 0$, we have
\[
\bar{B}_0^n = \Delta x^{-\nu} \alpha_0
  = \frac{\nu+2}{(2-\nu)2^{\nu}} \frac{1}{\Delta x^{\nu} \Gamma(2-\nu)}
  \geq  \frac{1}{\Delta x^{\nu} \Gamma(2-\nu)}.
\]
When $k > 0$, since $(k+1)^{1-\nu} - k^{1-\nu} \leq (1-\nu) k^{-\nu}$, we just need to show
\[
\bar{B}_{k}^{n}\geq \frac{1}{9\Delta x^{\nu}\Gamma(2-\nu)} (1-\nu) k^{-\nu}
  = \frac{1}{9 \Delta x^{\nu} \Gamma(1-\nu)} k^{-\nu}.
\]
Below we separate our proof into four cases.

\textbf{Case 1:} $n = 3$. By direct calculation, one can obtain
\begin{align*}
\bar{B}_1^3 &= \bar{B}_0^3 - \Delta x^{-\nu} \alpha_0 \bar{d}_2^3
  = \frac{2^{-\nu}}{\Delta x^{\nu} \Gamma(3-\nu)} \left[ \frac{3(\nu+4)}{2}
    \left( \frac{2}{3} \right)^{\nu} - 4 \right] \\
& \geq \frac{1 - \nu \log 2}{\Delta x^{\nu} \Gamma(3-\nu)} \left[ \frac{3(\nu+4)}{2}
    \left( 1 + \nu \log \frac{2}{3} \right) - 4 \right]
= \frac{1}{\Delta x^{\nu} \Gamma(1-\nu)} \left[
  1 + \frac{\nu(\beta_0 + \beta_1 \nu + \beta_2 \nu^2)}{2(2-\nu)(1-\nu)}
\right],
\end{align*}
where $\beta_0 = 9 - 12\log 3 + 8\log 2$, $\beta_1 = 3 \log 3(4 \log 2 - 1) -
12(\log 2)^2 - 2$, $\beta_2 = 3\log 2 (\log 3 - \log 2)$. It is not difficult
to check that $\beta_0 + \beta_1 \nu + \beta_2 \nu^2 > 0$ when $\nu \in (0,1)$.
Therefore
\[
\bar{B}_1^3 \geq \frac{1}{\Delta x^{\nu} \Gamma(1-\nu)}.
\]
Similarly, the case $k=2$ can be shown by
\begin{align*}
\bar{B}_2^3 &= \bar{B}_1^3 - \Delta x^{-\nu} \alpha_0 \bar{d}_1^3
  = \frac{2^{-\nu}}{\Delta x^{\nu} \Gamma(2-\nu)} \cdot \frac{1}{\nu+2}
    \left[ 2-\nu + \frac{3\nu}{2} \left( \frac{2}{3} \right)^{\nu}\right] \\
& \geq \frac{2^{-\nu}}{\Delta x^{\nu} \Gamma(2-\nu)} \cdot \frac{1}{\nu+2}
    \left[ 2-\nu + \frac{3\nu}{2} \left( 1 + \nu \log \frac{2}{3} \right)\right] \\
& = \frac{2^{-\nu}}{\Delta x^{\nu} \Gamma(1-\nu)} \left[
    1 + \frac{\nu[3 + (2 - 3\log 3 + 3\log 2)\nu]}{2(2+\nu)(1-\nu)}
  \right] \geq \frac{2^{-\nu}}{\Delta x^{\nu} \Gamma(1-\nu)}.
\end{align*}

\textbf{Case 2:} $n > 3$ and $k = n-1$. By defintion,
\[
\bar{B}_{n-1}^n = \bar{B}_0^n - \Delta x^{-\nu} \alpha_0 \sum_{k=1}^{n-1} \bar{d}_k^n
  = \Delta x^{-\nu} \alpha_0 \left( 1 - \sum_{k=1}^{n-1} \bar{d}_k^n \right).
\]
By Lemma \ref{bkm2}\,(3) and \eqref{ddbar1}, we can bound $\bar{B}_{n-1}^n$ by
\[
\bar{B}_{n-1}^n > \Delta x^{-\nu} \alpha_0 \bar{d}_0^n > \Delta x^{-\nu} \alpha_0 d_0^n.
\]
Now we consider odd and even $n$ separately. If $n = 2m+1$ and $m > 1$, by \eqref{cxis1},
\[
\bar{B}_{2m}^{2m+1} > \frac{1}{\Delta x^{\nu}\Gamma(3-\nu)} \left(
  \frac{2-\nu}{2}[3(2m+1)^{1-\nu}-(2m)^{1-\nu}]-[(2m+1)^{2-\nu}-(2m)^{2-\nu}]\right).
\]
Using the inequality \eqref{lem}, we see that
\[
\bar{B}_{2m}^{2m+1}\geq \frac{1}{\Delta x^{\nu} \Gamma(3-\nu)} (2-\nu)(1-\nu) (2m)^{-\nu}
  \left( 1 - \frac{7\nu}{24m} \right) > \frac{1}{2\Delta x^{\nu} \Gamma(1-\nu)} (2m)^{-\nu}.
\]
Similarly, when $n = 2m+2$ and $m \geq 1$, we have
\begin{align*}
\bar{B}_{2m+1}^{2m+2}&\geq \frac{1}{\Delta x^{\nu} \Gamma(3-\nu)} (2-\nu)(1-\nu) (2m)^{-\nu}
  \left( 1 - \frac{5\nu}{6m} \right)\\
  &=\frac{1}{\Delta x^{\nu} \Gamma(3-\nu)} (2-\nu)(1-\nu) (2m+1)^{-\nu}\left(\frac{2m}{2m+1} \right)^{-\nu}
  \left( 1 - \frac{5\nu}{6m} \right)\\
   &> \frac{1}{6\Delta x^{\nu} \Gamma(1-\nu)} (2m+1)^{-\nu}.
\end{align*}

\textbf{Case 3:} $n > 3$ and $k = n - 2$. We can directly use the result for $k = n-1$ to get
\[
\bar{B}_{n-2}^n \geq \bar{B}_{n-1}^n >
  \frac{1}{6\Delta x^{\nu} \Gamma(1-\nu)} (n-2)^{-\nu} \cdot \left( \frac{n-1}{n-2} \right)^{-\nu}
  > \frac{1}{9\Delta x^{\nu} \Gamma(1-\nu)} (n-2)^{-\nu}.
\]

\textbf{Case 4:} $n > 3$ and $k = 1,2,\cdots,n-3$. Using
\begin{gather*}
\bar{d}_j^n = \theta \bar{d}_{j+1}^n + d_j^n \geq d_j^n
  > \frac{2\nu}{3\alpha_0 \Gamma(1-\nu)} (n-j)^{-\nu-1}, \quad j = 1,2,\cdots,n-3, \\
\bar{d}_{n-2}^{n} = \frac{1}{4} (d_{n-1}^{n})^2 + d_{n-2}^{n}
  > \frac{\nu}{4\alpha_0 \Gamma(1-\nu)} 2^{-\nu-1},
\end{gather*}
we obtain
\begin{align*}
\bar{B}_k^n &= \bar{B}_{n-1}^n + \Delta x^{-\nu} \alpha_0 \sum_{j=1}^{n-1-k} \bar{d}_j^n
  > \bar{B}_{n-1}^n +
    \frac{\nu}{4\Delta x^{\nu} \Gamma(1-\nu)} \sum_{j=2}^{n-1-k} (n-j)^{-\nu-1} \\
  &\geq \frac{1}{2\Delta x^{\nu} \Gamma(1-\nu)} (n-1)^{-\nu} +
    \frac{\nu}{4\Delta x^{\nu} \Gamma(1-\nu)} \int_1^{n-1-k} (n-x)^{-\nu-1} \,\mathrm{d}x \\
  &= \frac{1}{2\Delta x^{\nu} \Gamma(1-\nu)} (n-1)^{-\nu} +
    \frac{1}{4\Delta x^{\nu} \Gamma(1-\nu)} [(1+k)^{-\nu} - (n-1)^{-\nu}] \\
  & \geq \frac{1}{4\Delta x^{\nu} \Gamma(1-\nu)} (1+k)^{-\nu} \geq \frac{1}{8\Delta x^{\nu} \Gamma(1-\nu)} k^{-\nu}.
\end{align*}
This completes the proof for all $k = 0,1,\cdots,n-1$.
\end{proof}

The purpose of the above lemma is an upper bound for the discrete fractional derivative of $|\bar{e}_j|^2$. We state the result in the following lemma:
\begin{lemma} \label{pdc}
For any $j \geq 3$,
\[
2\bar{e}_j \sum_{k=3}^j \bar{B}_{j-k}^j \nabla \bar{e}_k
  \geq \sum_{k=3}^j \bar{B}_{j-k}^j \nabla (|\bar{e}_k|^2).
\]
\end{lemma}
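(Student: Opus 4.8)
The inequality to prove is a discrete analogue of the classical fact that the fractional derivative of $\tfrac12 y^2$ is dominated by $y$ times the fractional derivative of $y$, i.e. the convexity/energy inequality underlying $L^2$-stability. Concretely, writing $B_k = \bar{B}_{j-k}^j$ for the kernel weights (with the index $k$ running from $3$ to $j$), the claim
\[
2\bar{e}_j \sum_{k=3}^j B_k \nabla \bar{e}_k
  \geq \sum_{k=3}^j B_k \nabla(|\bar{e}_k|^2)
\]
is purely algebraic once we know the two structural facts from Lemma \ref{pib}: the weights $B_k$ are \emph{positive} and \emph{monotone} in the right direction, namely $\bar{B}_0^j > \bar{B}_1^j > \cdots > \bar{B}_{j-1}^j > 0$, which in the reindexed notation means $B_j > B_{j-1} > \cdots > B_3 > 0$. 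So my plan is to reduce everything to these two facts by an Abel (summation-by-parts) rearrangement.

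\textbf{Step 1: reduce to a pointwise inequality in each increment.} For each $k$, use the elementary identity
\[
2\bar{e}_j \nabla \bar{e}_k - \nabla(|\bar{e}_k|^2)
  = 2\bar{e}_j(\bar{e}_k - \bar{e}_{k-1}) - (\bar{e}_k^2 - \bar{e}_{k-1}^2)
  = -(\bar{e}_k - \bar{e}_j)^2 + (\bar{e}_{k-1} - \bar{e}_j)^2 ,
\]
which one checks by expanding both sides (the cross terms $-2\bar e_j\bar e_{k-1}$ cancel, leaving $\bar e_j^2$ from each square). Hence the difference of the two sides of the claimed inequality equals
\[
\sum_{k=3}^j B_k \Big[ (\bar{e}_{k-1} - \bar{e}_j)^2 - (\bar{e}_k - \bar{e}_j)^2 \Big].
\]
It suffices to show this sum is nonnegative.

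\textbf{Step 2: Abel summation on the telescoping-like sum.} Write $a_k = (\bar{e}_k - \bar{e}_j)^2 \geq 0$, so that the sum is $\sum_{k=3}^j B_k (a_{k-1} - a_k)$. Summation by parts gives
\[
\sum_{k=3}^j B_k (a_{k-1} - a_k)
  = B_3 a_2 - B_j a_j + \sum_{k=3}^{j-1} (B_{k+1} - B_k) a_k .
\]
Now $a_j = (\bar e_j - \bar e_j)^2 = 0$, so the $-B_j a_j$ term vanishes; $B_3 a_2 \geq 0$ since $B_3 = \bar B_{j-3}^j > 0$ and $a_2 \geq 0$; and each increment $B_{k+1} - B_k = \bar{B}_{j-k-1}^j - \bar{B}_{j-k}^j > 0$ by the strict monotonicity in Lemma \ref{pib}, while $a_k \geq 0$. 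Therefore every term on the right is nonnegative, which proves the inequality.

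\textbf{Where the work actually sits.} There is essentially no analytic obstacle here — the entire content is the positivity and monotonicity of the $\bar B_k^n$, which has already been established in Lemma \ref{pib}; the present lemma is the clean ``consumption'' of that fact. The only points needing care are bookkeeping: getting the direction of the monotonicity right after reindexing $k \mapsto j-k$ (it is easy to flip a sign), correctly identifying that the upper boundary term drops because $\nabla$ acts on $\bar e_k$ and the contracted index hits $\bar e_j$ making $a_j=0$, and making sure the sum genuinely starts at $k=3$ (so the lower boundary term is $B_3 a_2$, not something involving $\bar e_0$). I would present Steps 1--2 essentially verbatim, since no routine grinding is hidden in them.
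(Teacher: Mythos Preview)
Your proof is correct and is precisely the standard argument: the paper does not give its own proof but simply refers to \cite[Lemma A.1]{liao2019dis}, whose proof is exactly the Abel-summation manipulation you wrote out, relying only on the positivity and monotonicity of $\bar B_k^n$ from Lemma~\ref{pib}. Your bookkeeping on the reindexing and boundary terms is accurate.
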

We refer the readers to \cite[Lemma A.1]{liao2019dis} for the proof of this lemma.

\subsection{\bf Estimation of the truncation errors} \label{sec:4.2}

Most error estimation is based on the estimation of the truncation error. In our case, it can be defined by
\beq\label{r2m2t}
r_j(\Delta x) := {}_0D^{\nu}_xy(x_{j})-{}_0D^{\nu}_{\Delta x}y(x_j),\qquad j \geq 1.
\eeq
Here ${}_0D^{\nu}_{\Delta x}y(x_j)$ is defined by replacing $y_j$ in \eqref{cfut1b}--\eqref{cfut2m1b} and \eqref{cfut2m2b} with $y(x_j)$. As mentioned previously, the first two time steps in our scheme have to be taken into account independently. Therefore we introduce the following modified truncation error for $j \geq 3$:
\beq \label{trj}
\tilde{r}_j(\Delta x) = r_j(\Delta x) -
  L_j \sum_{k=1}^2 \theta^{j-k} \bar{e}_k + \sum_{k=1}^2 \bar{B}_{j-k}^j \nabla \bar{e}_k,
\eeq
where $\bar{e}_k = e_k - \theta e_{k-1}$ and. Below we are going to derive bounds for both \eqref{r2m2t} and \eqref{trj}.

\begin{theorem}\label{error}
Assume that (H1) holds. There exists a constant $C_1$ depending only on the function $y$ and the final time $T$, such that for all $\Delta x > 0$,
\beq\label{rch2m2t}
|r_j(\Delta x)|\leq C_1{\Delta x}^{3-\nu}, \qquad j = 1,2,\cdots,2N.
\eeq
\end{theorem}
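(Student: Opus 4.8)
The plan is to estimate the truncation error $r_j(\Delta x)$ by splitting the Caputo integral over the same subintervals used in the construction of the scheme, and bounding the local interpolation error on each subinterval. On a generic interior interval $[x_{2k-1},x_{2k+1}]$ (or $[x_{2k},x_{2k+2}]$, or the initial block $[x_0,x_2]$), the scheme replaces $y'(s)$ by $[I_{[\cdot,\cdot]}y(s)]'$, where $I$ is the quadratic interpolant. The standard interpolation error estimate gives, under (H1), a bound of the form $|y(s) - I y(s)| \leq C \|y'''\|_\infty \Delta x^3$ on each subinterval, hence $|y'(s) - (Iy)'(s)| \leq C \|y'''\|_\infty \Delta x^2$ pointwise on the interior subintervals. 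Multiplying by the kernel $\omega_{1-\nu}(x_j - s)$ and integrating, the contribution of each subinterval $[x_{2k-1},x_{2k+1}]$ to $r_j(\Delta x)$ is bounded by $C\Delta x^2 \int_{x_{2k-1}}^{x_{2k+1}} \omega_{1-\nu}(x_j - s)\,ds$, and summing over $k$ the total is $C\Delta x^2 \int_0^{x_j} \omega_{1-\nu}(x_j - s)\,ds = C\Delta x^2 \cdot x_j^{1-\nu}/\Gamma(2-\nu) \leq C T^{1-\nu}\Delta x^2$, which is only order $2$, not $3-\nu$. This is the naive estimate and it is \emph{not} sharp; the improvement comes from exploiting cancellation.

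The sharp bound requires two refinements. First, on each interior subinterval away from the singularity $s = x_j$, one uses that the quadratic interpolation error $y(s) - Iy(s)$ has a sign-definite structure (it is $\tfrac{1}{6}y'''(\xi)\prod(s - x_i)$ with the product changing sign across nodes), so an integration by parts against the smooth kernel $\omega_{1-\nu}(x_j - s)$ on those intervals gains an extra factor of $\Delta x$; this is the classical argument that the composite Simpson-type quadrature against a smooth weight is third order. More carefully, after integration by parts $\int (y' - (Iy)')\omega_{1-\nu}(x_j-s)\,ds = [\cdots] + \int (y - Iy)\,\omega_{1-\nu}'(x_j-s)\,ds$; the boundary terms telescope across adjacent subintervals (the interpolant is continuous at the shared odd nodes), and $|\omega_{1-\nu}'(x_j - s)| \leq C(x_j-s)^{-\nu-1}$, so the remaining integral over intervals with $x_j - s \geq c\Delta x$ is bounded by $C\Delta x^3 \int_0^{x_j - \Delta x}(x_j - s)^{-\nu-1}\,ds \leq C\Delta x^{3-\nu}$. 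Second, on the few subintervals adjacent to the endpoint $x_j$ (where the kernel is singular and its derivative not integrable up to $s=x_j$), one does \emph{not} integrate by parts; instead one uses the direct pointwise bound $|y'(s)-(Iy)'(s)|\leq C\Delta x^2$ together with $\int_{x_j - 2\Delta x}^{x_j}\omega_{1-\nu}(x_j-s)\,ds = \tfrac{(2\Delta x)^{1-\nu}}{\Gamma(2-\nu)} \leq C\Delta x^{1-\nu}$, giving a contribution $C\Delta x^{2}\cdot\Delta x^{1-\nu} = C\Delta x^{3-\nu}$ from these boundary subintervals. Combining the two pieces yields $|r_j(\Delta x)| \leq C_1\Delta x^{3-\nu}$ with $C_1$ depending only on $\|y'''\|_{C[0,T]}$ and $T$.

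One must also separately check $j=1$ and $j=2$, where $y(x)$ is approximated on $[x_0,x_2]$ by the single quadratic $I_{[x_0,x_2]}y$ and evaluated at an endpoint or midpoint of that interval; here there is only one (singular) subinterval, and the direct estimate $|y'(s)-(I_{[x_0,x_2]}y)'(s)|\leq C\Delta x^2$ on $[0,x_1]$ resp. $[0,x_2]$ together with $\int_0^{x_j}\omega_{1-\nu}(x_j-s)\,ds \leq C\Delta x^{1-\nu}$ gives exactly $|r_j(\Delta x)|\leq C_1\Delta x^{3-\nu}$ — this is precisely where the coupling of the first two steps pays off, since a plain L1-type start would only give $\Delta x^{2-\nu}$ here. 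The main obstacle is the bookkeeping of the boundary terms in the integration-by-parts step: one needs the interpolant to match at shared nodes so the boundary contributions telescope, and one has to handle the slightly different interpolation patterns for odd $j=2m+1$ (initial block $[x_0,x_2]$ then pairs $[x_{2k-1},x_{2k+1}]$) versus even $j=2m+2$ (pairs $[x_{2k},x_{2k+2}]$) without the cancellation breaking; but the estimates are structurally identical in the two cases, so it suffices to carry out one in detail, as the authors presumably do.
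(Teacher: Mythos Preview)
Your proposal is correct and follows essentially the same strategy as the paper: integrate by parts to transfer the derivative from $y'-(Iy)'$ onto the kernel, use the $O(\Delta x^3)$ bound on $y-Iy$ against $(x_j-s)^{-\nu-1}$ on the interior subintervals, and treat the subinterval adjacent to the singularity separately to recover $\Delta x^{3-\nu}$.

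Two minor points of comparison. First, the boundary terms after integration by parts do not merely telescope---they \emph{vanish} outright, since $y-I_{[\,\cdot,\cdot\,]}y=0$ at every interpolation node and every subinterval endpoint is such a node; this is simpler than what you wrote. Second, on the final subinterval the paper also integrates by parts but uses the sharper pointwise bound $|y(s)-Iy(s)|\le \tfrac{M_1}{3}\Delta x^2(x_j-s)$ (which holds because $x_j$ itself is an interpolation node, so $y-Iy$ vanishes linearly there), so that $(x_j-s)^{-\nu-1}\cdot(x_j-s)=(x_j-s)^{-\nu}$ is integrable; your alternative of skipping integration by parts on that interval and using $|y'-(Iy)'|\le C\Delta x^2$ directly gives the same order and is equally valid. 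The paper likewise handles $j=1$ via integration by parts plus the explicit error formula rather than the direct derivative bound you use, but again both routes give $C\Delta x^{3-\nu}$.
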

\begin{proof}
Our error estimation will be established on the following error term of the Lagrange interpolation:
\be\label{ut01}
y(x)-I_{[x_k,x_{k+2}]}y(x)=\frac{y^{(3)}(\xi_k(x))}{6}(x-x_k)(x-x_{k+1})(x-x_{k+2}), \qquad \forall x\in[x_k,x_{k+2}],
\ee
where $\xi_k(x)$ is a function defined on $[x_k, x_{k+2}]$ with range $(x_k, x_{k+2})$. Let $M_1$ be the upper bound of $y^{(3)}$ on $[0,T]$. For any $x \in [x_k, x_{k+2}]$, we have
\beq\label{taylor2}
|y(x) - I_{[x_k,x_{k+2}]}y(x)| \leq \frac{M_1}{3} \Delta x^2 (x_{k+2} - x),
\eeq
or more simply,
\beq\label{taylor1}
|y(x) - I_{[x_k,x_{k+2}]}y(x)| \leq \frac{M_1}{6} \Delta x (x_{k+2}-x)(x-x_k) \leq \frac{M_1}{6}\Delta x^3.
\eeq
We first estimate $r_1(\Delta x)$:
\beq\label{I00}
\begin{split}
|r_1(\Delta x)|
&=\left|\frac{1}{\Gamma(1-\nu)}\int^{x_1}_0y'(s)
(x_1-s)^{-\nu}
ds
-\frac{1}{\Gamma(1-\nu)}\int^{x_1}_0[I_{[x_0,x_2]}y(s)]'
(x_1-s)^{-\nu}
ds \right|\\
&=\frac{\nu}{\Gamma(1-\nu)}\left|\int^{x_1}_0 [y(s) - I_{[x_0,x_2]} y(s)] (x_1-s)^{-\nu-1} ds\right| \\
&=\frac{\nu}{\Gamma(1-\nu)}\left|
\int^{x_1}_0
\frac{y^{(3)}(\xi_0(s))}{6}(s-x_0)(x_1-s)^{-\nu}(s-x_{2})ds\right|\\
&\leq\frac{\nu}{\Gamma(1-\nu)} \frac{M_1}{6}
\int^{x_1}_0 s(x_{2}-s)(x_1-s)^{-\nu}ds
=\frac{\nu}{(3-\nu) \Gamma(2-\nu)}\frac{M_1}{3}{\Delta x}^{3-\nu} < M_1 {\Delta x}^{3-\nu},
\end{split}
\eeq
where we have used $\Gamma(2-\nu) > 2/3$. The equation \eqref{I00} proves
\eqref{rch2m2t} for $j=1$. The case $j=2$ can be similarly proven, and here we
omit the details.

Now we estimate $r_{2m+1}(\Delta x)$ for $m \ge 1$. In a similar way to
\eqref{I00}, we can use integration by parts to obtain
\beq\label{ryut2m1}
\begin{split}
|r_{2m+1}(\Delta x)| &=
  \bigg|\frac{\nu}{\Gamma(1-\nu)}\int^{x_1}_0[y(s)-I_{[x_0,x_2]}y(s)]
(x_{2m+1}-s)^{-\nu-1}
ds\\
&\quad +\frac{\nu}{\Gamma(1-\nu)}\sum_{k=1}^{m-1}\int^{x_{2k+1}}_{x_{2k-1}}[y(s)-I_{[x_{2k-1},x_{2k+1}]}y(s)]
(x_{2m+1}-s)^{-\nu-1}
ds \\
&\quad +\frac{\nu}{\Gamma(1-\nu)} \int^{x_{2m+1}}_{x_{2m-1}}[y(s)-I_{[x_{2m-1},x_{2m+1}]}y(s)]
(x_{2m+1}-s)^{-\nu-1}
ds\bigg|.
\end{split}
\eeq
Applying \eqref{taylor2} and \eqref{taylor1}, we can estimate the truncation error by
\begin{align*}
|r_{2m+1}(\Delta x)| & \leq \frac{\nu}{\Gamma(1-\nu)} \left[
  \frac{M_1}{6} \Delta x^3 \left(
    \int_0^{x_1} (x_{2m+1} - s)^{-\nu-1} ds +
    \sum_{k=1}^{m-1} \int_{x_{2k-1}}^{x_{2k+1}} (x_{2m+1}-s)^{-\nu-1} ds
  \right)
\right] \\
&\quad +\frac{\nu}{\Gamma(1-\nu)} \int^{x_{2m+1}}_{x_{2m-1}}
  \frac{M_1}{3} \Delta x^2 (x_{2m+1}-s)^{-\nu} ds \\
& \leq \frac{M_1 \nu}{\Gamma(1-\nu)} \left[
  \frac{1}{6} \Delta x^3 \int_0^{x_{2m-1}} (x_{2m+1}-s)^{-\nu-1} ds +
  \frac{1}{3} \Delta x^2 \int_{x_{2m-1}}^{x_{2m+1}} (x_{2m+1}-s)^{-\nu} ds
\right] \\
& = \frac{M_1}{6\Gamma(2-\nu)}
  \left[ 2^{-\nu} (1 + 3\nu) - (1-\nu) (2m+1)^{-\nu} \right]
  \Delta x^{3-\nu} \leq M_1 \Delta x^{3-\nu}.
\end{align*}
The case $j=2m+2$ can be similarly proven, and the details are omitted.
\end{proof}

To show the error bounds for \eqref{trj}, we need the error estimation for the
first two time steps:

\begin{lemma} \label{e1e2}
Assume that both (H1) and (H2) hold, and $\Delta x^{\nu} < (7L)^{-1}$. Then
\[
|e_1| \leqslant 10C_1 \Delta x^3, \qquad |e_2| \leqslant 10C_1 \Delta x^3,
\]
where the constant $C_1$ is defined in Theorem \ref{error}.
\end{lemma}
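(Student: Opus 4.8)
The plan is to write down the scheme for the first two steps exactly (equation \eqref{y1y2} with $-\lambda y$ replaced by the general $f$), compare it with the exact equation at $x_1$ and $x_2$, and solve the resulting $2\times 2$ linear system for $e_1$ and $e_2$. Concretely, applying ${}_0D_{\Delta x}^{\nu}$ to $y(x_1)$ and $y(x_2)$ and using the definition of the truncation error gives
\[
\widehat D_0 e_0 + \widehat D_1 e_1 + \widehat D_2 e_2 = \Delta x^{\nu}\bigl(L_1 e_1 + r_1(\Delta x)\bigr), \qquad
\widetilde D_0 e_0 + \widetilde D_1 e_1 + \widetilde D_2 e_2 = \Delta x^{\nu}\bigl(L_2 e_2 + r_2(\Delta x)\bigr),
\]
with $e_0 = 0$. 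This is a linear system $M\,(e_1,e_2)^{\mathsf T} = \Delta x^{\nu}(r_1, r_2)^{\mathsf T}$, where
\[
M = \begin{pmatrix} \widehat D_1 - \Delta x^{\nu} L_1 & \widehat D_2 \\[2pt] \widetilde D_1 & \widetilde D_2 - \Delta x^{\nu} L_2 \end{pmatrix}.
\]
I would then invert $M$, bound $|e_1|, |e_2| \le \Delta x^{\nu}\,\|M^{-1}\|\,\max(|r_1|,|r_2|)$, and combine with Theorem \ref{error}, which gives $|r_j(\Delta x)| \le C_1 \Delta x^{3-\nu}$, so that $\Delta x^{\nu}|r_j| \le C_1 \Delta x^3$. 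The factor $10$ in the statement should come out of the bound on $\|M^{-1}\|$ once the smallness assumption $\Delta x^{\nu} < (7L)^{-1}$ is used.

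The key step is therefore to show that $M$ is invertible with a quantitative lower bound on $|\det M|$ (and a control on its entries) that survives the perturbation by the $\Delta x^{\nu} L_j$ terms. For $\Delta x = 0$ one has $\det M = \widehat D_1 \widetilde D_2 - \widehat D_2 \widetilde D_1$; plugging in the explicit values from the excerpt,
\[
\widehat D_1 \widetilde D_2 - \widehat D_2 \widetilde D_1
= \frac{1}{2^{\nu}\Gamma(3-\nu)^2}\Bigl[2(1-\nu)(\nu+2) + 2\nu^2\Bigr]
= \frac{2(\nu^2 - \nu + 2)}{2^{\nu}\Gamma(3-\nu)^2},
\]
which is bounded below by a positive constant uniformly in $\nu \in (0,1)$ (the bracket $\nu^2-\nu+2$ has minimum $7/4$, and $2^{\nu}\Gamma(3-\nu)^2$ is bounded above). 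The perturbation terms contribute $O(\Delta x^{\nu} L)$ corrections to $\det M$ and to the entries; under $\Delta x^{\nu} L < 1/7$ these are small enough that $|\det M|$ stays above, say, half of its unperturbed value, and $\|M^{-1}\|_\infty \le (\text{max row sum of } |\mathrm{adj}\, M|)/|\det M|$ can be made $\le 10/\Delta x^{0}$ after the $\Delta x^{\nu}$ in front is accounted for. I expect the main obstacle to be purely bookkeeping: carrying the explicit constants through the inversion so that the clean bound $|e_k| \le 10 C_1 \Delta x^3$ emerges, rather than some messier $\nu$-dependent constant; the choice of the threshold $(7L)^{-1}$ is presumably reverse-engineered to make exactly this work.

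An alternative, slightly cleaner route would be to note that \eqref{bary1} in the proof of Lemma \ref{b3km2} already records how $(\bar y_1, y_1)$ depend on the right-hand side in the linear case; for the error equation one can mimic that computation with $-\lambda y$ replaced by $L_j e_j + r_j$, using that $e_0 = 0$, and read off $e_1$ and $e_2$ as explicit rational functions of $\nu$, $\Delta x^{\nu} L_1$, $\Delta x^{\nu} L_2$ times the truncation errors. Either way, the substance is the uniform-in-$\nu$ nonvanishing of the $2\times 2$ determinant together with the smallness of the Lipschitz perturbation, and then Theorem \ref{error} supplies the $\Delta x^{3-\nu}$ factor that, multiplied by the leading $\Delta x^{\nu}$, yields the desired $\Delta x^3$.
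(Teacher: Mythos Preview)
Your proposal is correct and follows essentially the same route as the paper: write the $2\times2$ error system at $j=1,2$ and invert. The paper's only tactical difference is that it inverts the \emph{unperturbed} matrix $\bigl(\begin{smallmatrix}\widehat D_1&\widehat D_2\\\widetilde D_1&\widetilde D_2\end{smallmatrix}\bigr)$, keeping the $L_je_j$ terms on the right-hand side; this yields the self-referencing bounds $|e_1|\le\tfrac{3}{2}L\Delta x^{\nu}(|e_1|+|e_2|)+2C_1\Delta x^3$ and $|e_2|\le 2L\Delta x^{\nu}(|e_1|+|e_2|)+3C_1\Delta x^3$, whose sum closes immediately to $|e_1|+|e_2|\le 10C_1\Delta x^3$ under $\Delta x^{\nu}<(7L)^{-1}$ --- which is exactly where the constants $7$ and $10$ come from. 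Two minor slips in your write-up: the right-hand side should carry $-r_j$ rather than $+r_j$, and the determinant actually simplifies to $2(2-\nu)/[2^{\nu}\Gamma(3-\nu)^2]$, not $2(\nu^2-\nu+2)/[2^{\nu}\Gamma(3-\nu)^2]$; neither affects the argument.
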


\begin{proof}
By the numerical scheme \eqref{scheme} for $k = 1,2$ and the definition of the truncation error \eqref{r2m2t}, we have
\[
\left\{
\ba{l}
{\Delta x}^{-\nu}\widehat{D}_1e_1+{\Delta x}^{-\nu}\widehat{D}_2e_2=f(x_1,y(x_1))-f(x_1,y_1)-r_1(\Delta x), \\
{\Delta x}^{-\nu}\widetilde{D}_1e_1+{\Delta x}^{-\nu}\widetilde{D}_2e_2=f(x_2,y(x_2))-f(x_2,y_2)-r_2(\Delta x).
\ea
\right.
\]
After solving the equation, we get
\begin{align*}
|e_1| &= \Gamma(2-\nu) \Delta x^{\nu} \bigg|
  \frac{2+\nu}{2} [f(x_1, y(x_1)) - f(x_1,y_1) - r_1(\Delta x)] \\
& \quad - 2^{\nu-2} \nu [f(x_2, y(x_2)) - f(x_2,y_2) - r_2(\Delta x)] \bigg| \\
& \leq \Gamma(2-\nu) \Delta x^{\nu} \left(
  \frac{2+\nu}{2} (L|e_1| + C_1 \Delta x^{3-\nu}) +
  2^{\nu-2} \nu (L|e_2| + C_1 \Delta x^{3-\nu})
\right) \\
& \leq \Delta x^{\nu} \left(
  \frac{3}{2} (L|e_1| + C_1 \Delta x^{3-\nu}) +
  \frac{1}{2} (L|e_2| + C_1 \Delta x^{3-\nu})
\right) \leq \frac{3}{2} L \Delta x^{\nu} (|e_1| + |e_2|) + 2C_1 \Delta x^3.
\end{align*}
By similar means, we can obtain $|e_2| \leq 2L \Delta x^{\nu} (|e_1| + |e_2|) +
3C_1 \Delta x^3$. Summing up the two inequalities yields
\begin{displaymath}
|e_1| + |e_2| \leq \frac{7}{2} L \Delta x^{\nu} (|e_1| + |e_2|) + 5 C_1 \Delta x^3.
\end{displaymath}
Therefore when $\Delta x^{\nu} < (7L)^{-1}$, we have
\beq\label{e1pe2}
|e_1| + |e_2| \leq 10 C_1 \Delta x^3,
\eeq
which completes the proof.
\end{proof}

The above lemma already shows that we do not lose any numerical accuracy for
the first two time steps. In fact, their orders are slightly higher than the
general error bound $O(\Delta x^{3-\nu})$. This is necessary to provide error
bounds for $\tilde{r}_j(\Delta x)$ in the following theorem.
\begin{theorem}
Assume both (H1) and (H2) hold, and $\Delta x^{\nu} < (10|L|)^{-1}$. There exists a constant $C$ such that
\beq \label{rjdx}
|\tilde{r}_j(\Delta x)| \leq C \Delta x^{3-\nu}.
\eeq
\end{theorem}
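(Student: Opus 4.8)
The plan is to bound the three terms in the definition \eqref{trj} of $\tilde{r}_j(\Delta x)$ separately. The first term $r_j(\Delta x)$ is already controlled by Theorem \ref{error}: $|r_j(\Delta x)| \leq C_1 \Delta x^{3-\nu}$. For the second term, I would use the Lipschitz bound $|L_j| \leq L$ from \eqref{fxiyi} together with $\theta = 2\nu/(2+\nu) < 1$ to write $\bigl| L_j \sum_{k=1}^2 \theta^{j-k} \bar{e}_k \bigr| \leq L (|\bar{e}_1| + |\bar{e}_2|)$; since $\bar{e}_k = e_k - \theta e_{k-1}$ and $e_0 = 0$, we have $|\bar{e}_1| + |\bar{e}_2| \leq |e_1| + (1+\theta)|e_2| \leq 2(|e_1| + |e_2|) \leq 20 C_1 \Delta x^3$ by Lemma \ref{e1e2}. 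Combined with the hypothesis $\Delta x^\nu < (10|L|)^{-1}$, this gives a bound of the form $20 C_1 L \Delta x^3 \leq 2 C_1 \Delta x^{3-\nu}$.

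For the third term $\sum_{k=1}^2 \bar{B}_{j-k}^j \nabla \bar{e}_k$, the idea is to use the monotonicity and upper bound on the $\bar{B}$-coefficients. Since $\nabla \bar{e}_1 = \bar{e}_1 - \bar{e}_0 = \bar{e}_1 - e_0 = \bar{e}_1$ and $\nabla \bar{e}_2 = \bar{e}_2 - \bar{e}_1$, I would estimate $\bigl| \sum_{k=1}^2 \bar{B}_{j-k}^j \nabla \bar{e}_k \bigr| \leq \bar{B}_{j-1}^j |\bar{e}_1| + \bar{B}_{j-2}^j |\bar{e}_2 - \bar{e}_1| \leq \bar{B}_0^j (|\bar{e}_1| + |\bar{e}_2 - \bar{e}_1|)$ using the monotonicity $\bar{B}_{j-1}^j, \bar{B}_{j-2}^j \leq \bar{B}_0^j$ from Lemma \ref{pib}. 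Since $\bar{B}_0^j = \Delta x^{-\nu} \alpha_0$ is bounded by a constant times $\Delta x^{-\nu}$, and $|\bar{e}_1| + |\bar{e}_2 - \bar{e}_1| \leq 2(|\bar{e}_1| + |\bar{e}_2|) \leq O(\Delta x^3)$ again by Lemma \ref{e1e2}, this term is $O(\Delta x^{-\nu}) \cdot O(\Delta x^3) = O(\Delta x^{3-\nu})$.

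Adding the three bounds gives \eqref{rjdx} with $C$ an explicit multiple of $C_1$ (times a constant depending on $\nu$ and the product $L \Delta x^\nu$, which is controlled by the hypothesis). I do not expect any real obstacle here: all the hard work was front-loaded into Theorem \ref{error}, Lemma \ref{e1e2}, and the monotonicity statement in Lemma \ref{pib}. The only mild subtlety is keeping track of the $\theta$-geometric factors and the fact that $\bar{e}_k$ differs from $e_k$; but since $0 < \theta < 2/3$ all these sums are geometrically bounded, so no loss of order occurs. The point of the estimate is precisely that the slightly-better-than-$O(\Delta x^{3-\nu})$ bound $O(\Delta x^3)$ on $|e_1| + |e_2|$ from Lemma \ref{e1e2} compensates exactly for the $\Delta x^{-\nu}$ blow-up in $\bar{B}_0^j$, leaving the modified truncation error at the same order $3-\nu$ as the interior truncation error.
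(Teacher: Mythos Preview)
Your proposal is correct and follows essentially the same approach as the paper: bound $r_j$ by Theorem~\ref{error}, bound the initial-error contributions using Lemma~\ref{e1e2} (exploiting that $|e_1|+|e_2|=O(\Delta x^3)$ to absorb the $\Delta x^{-\nu}$ from the $\bar{B}$-coefficients), and control the $\bar{B}_{j-k}^j$ via $\bar{B}_0^j=\alpha_0\Delta x^{-\nu}$. The only cosmetic difference is that the paper first collapses the $\bar{e}_k$ and $\nabla\bar{e}_k$ sums into coefficients of $e_1$ and $e_2$ (using $\bar{e}_1=e_1$, $\bar{e}_2=e_2-\theta e_1$, so that $L_j\sum_{k=1}^2\theta^{j-k}\bar{e}_k=L_j\theta^{j-2}e_2$) before applying the triangle inequality, whereas you keep the $\bar{e}_k$'s and bound them separately; both routes give the same order with comparable constants.
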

\begin{proof}
We first estimate the coefficients $\bar{B}_{j-1}^j$ and $\bar{B}_{j-2}^j$. According to
\eqref{Bbar},
\begin{align*}
& \bar{B}_{j-1}^j < \Delta x^{-\nu} \alpha_0 =
  \frac{(\nu+2) \Delta x^{-\nu}}{\Gamma(3-\nu)2^{\nu}} \leq 3 \Delta x^{-\nu}, \\
& \bar{B}_{j-2}^j - \bar{B}_{j-1}^j = \Delta x^{-\nu} \alpha_0 \bar{d}_1^j
  < \Delta x^{-\nu} \alpha_0 \leq 3 \Delta x^{-\nu},
\end{align*}
where we have used $\bar{d}_1^j < 1$ implied by Lemma \ref{bkm2}(3)(5). Now
we can apply triangle inequality to \eqref{trj}:
\begin{align*}
|\tilde{r}_j(\Delta x)| & \leq |r_j(\Delta x)| +
  |(1+\theta) \bar{B}_{j-2}^j - \bar{B}_{j-1}^j| \cdot |e_1| +
  |L_j \theta^{j-2} - \bar{B}_{j-2}^j| \cdot |e_2| \\
& \leq C_1 \Delta x^{3-\nu} + 70 C_1 \Delta x^{3-\nu} +
  10 C_1 (|L_j| + 6 \Delta x^{-\nu}) \Delta x^3.
\end{align*}
Here we have applied Theorem \ref{error} and Lemma \ref{e1e2}. Since $\Delta
x^{\nu} < (10 |L|)^{-1}$, the above inequality yields
\[
|\tilde{r}_j(\Delta x)| \leq 132 C_1 \Delta x^{3-\nu}.
\]
\end{proof}

\subsection{Error analysis} \label{sec:7}
Now we are ready to summarize the previous two subsections and carry out the
error analysis for our scheme. The purpose of Section \ref{sec:reformulation}
is to provide prepartory works to introduce an important tool --- the
complementary discrete convolution kernels. Inspired by the property
\eqref{omg}, we would like to find the discrete kernel $P_j^n$, corresponding
to the kernel $\omega_{\nu}(\cdot)$, which satisfies
\beq\label{pnj}
\sum_{j=m}^{n}P_{n-j}^{n}\bar{B}_{j-m}^{j}\equiv1, \qquad \forall 3\leq m\leq n\leq 2N.
\eeq
According \cite[eq. (2.6)]{liao2019dis}, we have
\be\label{Pjn}
P_{0}^{n}=\frac{1}{\bar{B}_{0}^{n}}, \qquad
P_{j}^{n}=\frac{1}{\bar{B}_{0}^{n-j}}\sum_{k=0}^{j-1}(\bar{B}_{j-k-1}^{n-k}-\bar{B}_{j-k}^{n-k})P_{k}^{n}
\quad \text{for } 1\leq j\leq n-3.
\ee
Define
\beq \label{pnj2j1}
P_{n-2}^{n}=0, \qquad P_{n-1}^{n}=0.
\eeq
Then when $m=1,2$, we have
\begin{align} \label{mdengyu1}
\sum_{j=1}^{n}P_{n-j}^{n}\bar{B}_{j-1}^{j}
& =P_{n-1}^{n}\bar{B}_{0}^{j}+P_{n-2}^{n}\bar{B}_{1}^{j}
+\sum_{j=3}^{n}P_{n-j}^{n}\bar{B}_{j-1}^{j}\leq\sum_{j=3}^{n}P_{n-j}^{n}\bar{B}_{j-3}^{j}=1,\\\label{m2dengyu1}
\sum_{j=2}^{n}P_{n-j}^{n}\bar{B}_{j-2}^{j}
& =P_{n-2}^{n}\bar{B}_{0}^{j}+\sum_{j=3}^{n}P_{n-j}^{n}\bar{B}_{j-2}^{j}
\leq\sum_{j=3}^{n}P_{n-j}^{n}\bar{B}_{j-3}^{j}=1.
\end{align}

By Lemma \ref{pib} and \eqref{Pjn}--\eqref{pnj2j1}, we know that all the
coefficients $P_j^n\geq0$.  These coefficients help us ``invert'' the discrete
fractional derivative, so that we can derive the recursive inequality for the
numerical error:
\begin{lemma}
For any $n \geq 3$, it holds that
\beq \label{bare_n}
|\bar{e}_n|^2 \leq \sum_{j=3}^n P_{n-j}^n
  \sum_{k=3}^j 4L \theta^{j-k} |\bar{e}_k|^2 + |\bar{e}_2|^2 + 2\sum_{j=3}^n P_{n-j}^n |\bar{e}_j| \cdot |\tilde{r}_j(\Delta x)|,
\eeq
where $\tilde{r}_j(\Delta x)$ is defined in \eqref{trj}.
\end{lemma}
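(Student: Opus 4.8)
The plan is to start from the numerical scheme in the reformulated form \eqref{ybar} and subtract the exact-solution version \eqref{ybar1} to obtain an error equation for $\bar e_n$. Since $_0D^{\nu}_{\Delta x} y(x_n) = {}_0D^{\nu}_x y(x_n) - r_n(\Delta x)$ and $_0D^{\nu}_{\Delta x} y_n = f(x_n, y_n)$, subtraction gives $\sum_{k=1}^n \bar B_{n-k}^n \nabla\bar e_k + (\bar B_{n-1}^n - \Delta x^{-\nu}\alpha_0\bar d_0^n) e_0 = f(x_n,y(x_n)) - f(x_n,y_n) + r_n(\Delta x) = L_n e_n + r_n(\Delta x)$. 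Because $e_0 = 0$ that boundary term drops. I would then peel off the $k=1,2$ contributions from the sum on the left, writing $\sum_{k=1}^n \bar B_{n-k}^n \nabla \bar e_k = \sum_{k=3}^n \bar B_{n-k}^n \nabla \bar e_k + \bar B_{n-1}^n \nabla \bar e_1 + \bar B_{n-2}^n \nabla \bar e_2$, and move these two terms to the right-hand side, and similarly express $L_n e_n = L_n \bar y_n$-type contribution: using $e_n = \sum_{k=0}^n \theta^{n-k}\bar e_k$ (the analogue of \eqref{yk}), I peel off $L_n(\bar e_1 \theta^{n-1} + \bar e_2 \theta^{n-2})$. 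The point of the definition \eqref{trj} of $\tilde r_j$ is precisely that all the $k=1,2$ junk — namely $-L_j\sum_{k=1}^2\theta^{j-k}\bar e_k + \sum_{k=1}^2 \bar B_{j-k}^j \nabla\bar e_k$ — gets absorbed, so that the error equation takes the clean form
\[
\sum_{k=3}^n \bar B_{n-k}^n \nabla\bar e_k = L_n \sum_{k=3}^n \theta^{n-k}\bar e_k + \tilde r_n(\Delta x), \qquad n \geq 3,
\]
after also noting $\nabla\bar e_k$ telescoping issues are handled by $\bar e_2$ appearing as the "initial" value for the $k\geq 3$ convolution.

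Next I would multiply this identity by $2\bar e_n$ and apply Lemma \ref{pdc} to replace $2\bar e_n \sum_{k=3}^n \bar B_{n-k}^n\nabla\bar e_k$ by a lower bound $\sum_{k=3}^n \bar B_{n-k}^n \nabla(|\bar e_k|^2)$. This yields
\[
\sum_{k=3}^n \bar B_{n-k}^n \nabla(|\bar e_k|^2) \leq 2 L_n \bar e_n \sum_{k=3}^n \theta^{n-k}\bar e_k + 2\bar e_n \tilde r_n(\Delta x) \leq \sum_{k=3}^n 2|L_n|\,\theta^{n-k}(|\bar e_n|^2 + |\bar e_k|^2) + 2|\bar e_n|\cdot|\tilde r_n(\Delta x)|,
\]
using $2ab \leq a^2 + b^2$ on the cross terms; bounding $|L_n|\leq L$ and $\sum_{k=3}^n \theta^{n-k} \leq 1/(1-\theta)$... actually, to match the stated conclusion exactly I should be more careful and keep the factor so it comes out to $4L\theta^{j-k}$ inside the double sum — this suggests splitting $2|L_n|\theta^{n-k}|\bar e_n|^2$ separately, noting $\sum_k \theta^{n-k}\le$ a constant, but since the final inequality has $4L\theta^{j-k}|\bar e_k|^2$ under a further $P_{n-j}^n$-weighted sum over $j$, the $|\bar e_n|^2$ terms must be reabsorbed after the convolution inversion. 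So instead I would keep the inequality in the form $\sum_{k=3}^n \bar B_{n-k}^n \nabla(|\bar e_k|^2) \leq 2L\sum_{k=3}^n\theta^{n-k}(|\bar e_n|^2+|\bar e_k|^2) + 2|\bar e_n||\tilde r_n|$ and defer the reabsorption.

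The final step is the convolution inversion. I would sum the last inequality against the complementary kernel $P_{n-m}^n$ — precisely, apply $\sum_{m=3}^n P_{n-m}^n(\cdot)$ to the version of the inequality indexed by $m$ in place of $n$ — and use the defining property \eqref{pnj}, $\sum_{j=m}^n P_{n-j}^n \bar B_{j-m}^j \equiv 1$, together with the Abel-summation / telescoping identity that $\sum_{m=3}^n P_{n-m}^n \sum_{k=3}^m \bar B_{m-k}^m \nabla(|\bar e_k|^2) = |\bar e_n|^2 - |\bar e_2|^2$ (this is exactly the discrete analogue of $\int_0^x \omega_\nu(x-s)\,d[{}_0D^\nu_s u] = u(x)-u(0)$, and is where \eqref{pnj2j1} and \eqref{mdengyu1}–\eqref{m2dengyu1} get used to handle the lower endpoint). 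That produces
\[
|\bar e_n|^2 \leq |\bar e_2|^2 + \sum_{j=3}^n P_{n-j}^n\Big( 2L\sum_{k=3}^j \theta^{j-k}(|\bar e_j|^2 + |\bar e_k|^2) + 2|\bar e_j||\tilde r_j(\Delta x)|\Big).
\]
Finally, bounding $\sum_{k=3}^j\theta^{j-k} \leq 1/(1-\theta) \leq 2$ (since $\theta = 2\nu/(2+\nu) < 2/3$, so $1/(1-\theta) < 3$) I would combine the two contributions $2L\sum_k\theta^{j-k}|\bar e_j|^2 + 2L\sum_k\theta^{j-k}|\bar e_k|^2 \leq \sum_{k=3}^j 4L\theta^{j-k}|\bar e_k|^2$ after relabelling — more precisely, $2L|\bar e_j|^2 \sum_{k=3}^j\theta^{j-k}$ can itself be written as $\le \sum_{k=3}^j 2L\theta^{j-k}|\bar e_j|^2$, but to fold it into a sum of $|\bar e_k|^2$ I use that $\theta^{j-k}\le 1$ and the total $\theta$-mass is bounded, yielding the clean form with coefficient $4L\theta^{j-k}$ stated in \eqref{bare_n}. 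The main obstacle I anticipate is the bookkeeping in the reformulation step — verifying that the $k=1,2$ terms from \emph{both} the discrete-derivative sum and the $L_n e_n$ expansion precisely assemble into the quantity subtracted in the definition \eqref{trj} of $\tilde r_j$, including getting the $\bar B_{j-k}^j$ versus $\theta^{j-k}$ coefficients and signs exactly right, and making sure the telescoping convolution identity is applied with the correct endpoint term $|\bar e_2|^2$ rather than $|\bar e_0|^2$; the rest is a routine application of Lemma \ref{pdc}, the nonnegativity of $P_{n-j}^n$, and Young's inequality.
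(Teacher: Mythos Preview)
Your approach is essentially identical to the paper's: derive the clean error identity $\sum_{k=3}^j \bar B_{j-k}^j\nabla\bar e_k = L_j\sum_{k=3}^j\theta^{j-k}\bar e_k - \tilde r_j(\Delta x)$ from \eqref{ybar}--\eqref{ybar1}, multiply by $2\bar e_j$, apply Lemma~\ref{pdc} and Young's inequality, then hit the resulting inequality with $\sum_{j=3}^n P_{n-j}^n$ and invoke \eqref{pnj} to telescope the left side to $|\bar e_n|^2-|\bar e_2|^2$. Two small slips to fix: the sign on the truncation term should be $-\tilde r_j$ (harmless since you take absolute values), and your Young step should read $2L_j\bar e_j\theta^{j-k}\bar e_k \le L\theta^{j-k}(|\bar e_j|^2+|\bar e_k|^2)$ with coefficient $L$, not $2L$ --- with the correct constant, $L|\bar e_j|^2\sum_{k=3}^j\theta^{j-k}<3L|\bar e_j|^2=3L\theta^{0}|\bar e_j|^2$ combines with the $k=j$ term $L|\bar e_j|^2$ to give $4L\theta^{0}|\bar e_j|^2$, and the $k<j$ terms trivially satisfy $L\theta^{j-k}|\bar e_k|^2\le 4L\theta^{j-k}|\bar e_k|^2$, which is exactly how the paper obtains the $4L$ (it does this merge \emph{before} the convolution inversion rather than after, but the order is immaterial).
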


\begin{proof}
Plugging \eqref{ybar1} and \eqref{pb} into \eqref{r2m2t}, we get
\begin{align*}
r_j(\Delta x) &= f(x_j, y(x_j)) - \sum_{k=1}^j \bar{B}_{j-k}^j \nabla \bar{y}(x_k)
  - (\bar{B}_{n-1}^n - \Delta x^{-\nu} \alpha_0 \bar{d}_0^n) y_0 \\
&= f(x_j, y(x_j)) - \sum_{k=1}^j \bar{B}_{j-k}^j \nabla \bar{e}_k
  - \sum_{k=1}^j \bar{B}_{j-k}^j \nabla \bar{y}_k
  - (\bar{B}_{n-1}^n - \Delta x^{-\nu} \alpha_0 \bar{d}_0^n) y_0.
\end{align*}
By \eqref{ybar}\eqref{scheme} and \eqref{fxiyi}, the above equation can be further simplified:
\begin{align*}
r_j(\Delta x) &= f(x_j, y(x_j)) - f(x_j, y_j) - \sum_{k=1}^j \bar{B}_{j-k}^j \nabla \bar{e}_k
  = L_j e_j - \sum_{k=1}^j \bar{B}_{j-k}^j \nabla \bar{e}_k \\
& = L_j \sum_{k=1}^j \theta^{j-k} \bar{e}_k - \sum_{k=1}^j \bar{B}_{j-k}^j \nabla \bar{e}_k.
\end{align*}
Now we use the \eqref{trj} to rewrite the above equation as
\beq \label{barB}
\sum_{k=3}^j \bar{B}_{j-k}^j \nabla \bar{e}_k = L_j \sum_{k=3}^j \theta^{j-k} \bar{e}_k
  - \tilde{r}_j(\Delta x).
\eeq
Now we multiply both sides of the above equation by $2\bar{e}_j$. The right-hand side can be bounded by
\begin{align*}
2\bar{e}_j \left[ L_j \sum_{k=3}^j \theta^{j-k} \bar{e}_k
  - \tilde{r}_j(\Delta x) \right] &\leq
L \sum_{k=3}^j \theta^{j-k} (|\bar{e}_j|^2 + |\bar{e}_k|^2)
  + 2|\bar{e}_j| \cdot |\tilde{r}_j(\Delta x)| \\
& \leq \sum_{k=3}^j 4L \theta^{j-k} |\bar{e}_k|^2
  + 2|\bar{e}_j| \cdot |\tilde{r}_j(\Delta x)|,
\end{align*}
where we have used $\theta < 2/3$, and the left-hand side can be bounded from below by Lemma \ref{pdc}. Catenating both bounds using \eqref{barB}, we see that
\[
\sum_{k=3}^j 4L \theta^{j-k} |\bar{e}_k|^2 +
  2|\bar{e}_j| \cdot |\tilde{r}_j(\Delta x)| \geq
  \sum_{k=3}^j \bar{B}_{j-k}^j \nabla (|\bar{e}_k|^2).
\]
Multiplying both sides of the above equation by $P_{n-j}^j$ and taking the sum over $j$, one gets
\[
\sum_{j=3}^n P_{n-j}^j \sum_{k=3}^j 4L \theta^{j-k} |\bar{e}_k|^2 +
  2\sum_{j=3}^n P_{n-j}^j |\bar{e}_j| \cdot |\tilde{r}_j(\Delta x)| \geq
\sum_{j=3}^n P_{n-j}^j \sum_{k=3}^j \bar{B}_{j-k}^j \nabla (|\bar{e}_k|^2).
\]
Applying the identity \eqref{pnj} yields
\[
\sum_{j=3}^n P_{n-j}^j \sum_{k=3}^j 4L \theta^{j-k} |\bar{e}_k|^2 +
  2\sum_{j=3}^n P_{n-j}^j |\bar{e}_j| \cdot |\tilde{r}_j(\Delta x)| \geq
  \sum_{k=3}^n \nabla(|\bar{e}_k|^2) = |\bar{e}_n|^2 - |\bar{e}_2|^2,
\]
which is clearly equivalent to the conclusion of the lemma \eqref{bare_n}.
\end{proof}

The next step, we can now apply mathematical induction to bound the error by the initial error
and the truncation errors.

\begin{lemma} \label{ebar}
Let $\bar{e}_n = e_n - \theta e_{n-1}$ with $\theta = 2\nu / (2+\nu)$. If
\beq \label{deltax}
\Delta x^{\nu} \leq \frac{1}{24\pi_B L},
\eeq
it holds that
\beq\label{ebarn}
|\bar{e}_n| \leq 2E_{\nu}(24\pi_B L x_n^{\nu}) \left(
  |\bar{e}_2| + 2 \max_{3 \leq k \leq n} \sum_{j=3}^k P_{k-j}^k |\tilde{r}_j(\Delta x)|
\right), \qquad \text{for all } n \geq 2,
\eeq
where $\tilde{r}_j(\Delta x)$ is defined in \eqref{trj}, and $E_{\nu}$ is the Mittag-Leffler function defined by \eqref{ml}.
\end{lemma}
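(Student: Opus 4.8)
The plan is to prove \eqref{ebarn} by strong induction on $n$, taking the recursion \eqref{bare_n} as the starting point, using the non-negativity of the complementary kernels $P_j^n$ (noted after \eqref{pnj2j1}), and the Mittag-Leffler summation estimate
\[
\sum_{j=1}^{n-1} P_{n-j}^n E_\nu(\mu x_j^\nu) \le \frac{\pi_B}{\mu}\bigl(E_\nu(\mu x_n^\nu)-1\bigr), \qquad \mu > 0,
\]
which is available from \cite{liao2019dis}. Write $w_k = |\bar{e}_k|$, $\mu = 24\pi_B L$ (assume $L>0$; $L=0$ is handled directly), and $G_n = |\bar{e}_2| + 2\max_{3\le k\le n}\sum_{j=3}^k P_{k-j}^k|\tilde{r}_j(\Delta x)|$, so \eqref{ebarn} is exactly $w_n \le 2E_\nu(\mu x_n^\nu)G_n$. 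Since $E_\nu\ge 1$ and $G_n$ is nondecreasing in $n$, the base case $n=2$ is immediate, and $n=3$ follows by dividing \eqref{bare_n} (which then has only the term $j=k=3$) by $w_3$: the self-referential term has coefficient $4LP_0^3\le 4L\Delta x^\nu\le (6\pi_B)^{-1}<\tfrac12$ by \eqref{deltax}, so it is absorbed on the left, leaving $w_3\le(1-4LP_0^3)^{-1}G_3\le 2G_3$.

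For the inductive step, fix $n\ge 4$, assume \eqref{ebarn} for all $2\le m\le n-1$, and let $N_0\in\{2,\dots,n\}$ attain $\max_{2\le m\le n}w_m$. If $N_0\le n-1$, the inductive hypothesis at $N_0$, together with the monotonicity of $E_\nu$ and of $m\mapsto G_m$, gives $w_n\le w_{N_0}\le 2E_\nu(\mu x_{N_0}^\nu)G_{N_0}\le 2E_\nu(\mu x_n^\nu)G_n$; hence we may assume $N_0=n$, i.e.\ $w_n=\max_{2\le m\le n}w_m$. The key step is to \emph{linearize} \eqref{bare_n} at level $n$: as $w_k\le w_n$ for all $k\le n$ and $|\bar{e}_2|\le w_n$, we have $w_k^2\le w_n w_k$, $|\bar{e}_2|^2\le w_n|\bar{e}_2|$, $w_j\le w_n$, and dividing the resulting inequality by $w_n$ yields
\[
w_n \le 4L\sum_{j=3}^n P_{n-j}^n\sum_{k=3}^j\theta^{j-k}w_k + |\bar{e}_2| + 2\sum_{j=3}^n P_{n-j}^n|\tilde{r}_j(\Delta x)|.
\]
The only self-referential term on the right is the $j=k=n$ term, equal to $4LP_0^n w_n$; by \eqref{deltax} and $\alpha_0\ge 1$ (so $P_0^n\le\Delta x^\nu$) its coefficient is at most $(6\pi_B)^{-1}<1$, so it moves to the left. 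In every remaining term $k\le n-1$, so the inductive hypothesis gives $w_k\le 2E_\nu(\mu x_k^\nu)G_k\le 2E_\nu(\mu x_k^\nu)G_n$. I would substitute this, interchange the order of summation, bound the inner geometric sum in $\theta$ by $(1-\theta)^{-1}$ (recall $\theta<\tfrac23$), peel off the $j=n$ contribution $P_0^n E_\nu(\mu x_n^\nu)$, and apply the Mittag-Leffler estimate to the remaining sum; the last two terms above collapse into $G_n$ by the very definition of $G_n$. What emerges is an inequality $w_n\le C\,G_n E_\nu(\mu x_n^\nu)$ with $C$ assembled from $(1-4LP_0^n)^{-1}$, $\tfrac{8L}{1-\theta}$, $P_0^n$, $\tfrac{\pi_B}{\mu}$ and $1$, and the choice $\mu=24\pi_B L$ together with \eqref{deltax} is precisely what forces $C\le 2$, closing the induction.

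The main obstacle is this final constant bookkeeping in the case $N_0=n$: after the self-term absorption and the Mittag-Leffler step one is left to juggle $\alpha_0\ge 1$, $\theta<\tfrac23$ (so $(1-\theta)^{-1}<3$), $P_0^n=\Delta x^\nu/\alpha_0\le\Delta x^\nu\le(24\pi_B L)^{-1}$ and $\pi_B=9$ so that the accumulated prefactor does not exceed the constant $2$ demanded by \eqref{ebarn} --- the numbers $24$, $\pi_B=9$ and the leading $2$ are calibrated together for exactly this. A secondary, purely organizational point is that the special treatment of the first two time steps is already baked into \eqref{bare_n}, whose right-hand side carries $|\bar{e}_2|$ (not $|\bar{e}_1|$) with the $\bar{e}_1,\bar{e}_2$-dependence folded into $\tilde{r}_j(\Delta x)$; so within this proof nothing extra is needed, and the estimates of $|\bar{e}_1|,|\bar{e}_2|$ (Lemma \ref{e1e2}) and of $|\tilde{r}_j(\Delta x)|$ enter only afterwards, when \eqref{ebarn} is converted into the final $O(\Delta x^{3-\nu})$ bound.
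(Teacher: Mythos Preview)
Your proof follows the paper's almost verbatim: strong induction, case-split on whether $|\bar{e}_n|$ is the running maximum, linearize \eqref{bare_n} in the nontrivial branch, absorb the self-referential part, apply the inductive hypothesis, bound the geometric sum in $\theta$, and close with the Mittag-Leffler estimate \eqref{mlineq}. Two small points are worth noting.

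First, the paper absorbs the \emph{entire} $j=n$ contribution, not just the diagonal $j=k=n$ piece: since $|\bar{e}_n|$ is the maximum one may bound every $|\bar{e}_k|$ in the $j=n$ sum by $|\bar{e}_n|$, giving a self-coefficient $P_0^n\sum_{k=3}^n 4L\theta^{n-k}<12\pi_B L\,\Delta x^{\nu}\le\tfrac12$ and hence the prefactor $2$ exactly. This eliminates both the ``peel off $P_0^nE_\nu(\mu x_n^\nu)$'' step and the subsequent verification that $(1+24LP_0^n)/(1-4LP_0^n)\le 2$, so the ``main obstacle'' you flag simply does not arise in the paper's organization. Your version also works numerically, but is needlessly delicate.

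Second, ``interchange the order of summation'' is not the right description of the step: after substituting $w_k\le 2E_\nu(\mu x_k^\nu)G_n$ one uses the monotonicity $E_\nu(\mu x_k^\nu)\le E_\nu(\mu x_j^\nu)$ for $k\le j$ (the paper writes this as $F_kG_k\le F_jG_n$), which collapses the inner $k$-sum to a pure geometric series directly. Literally interchanging first leaves you with $\sum_{j\ge k}P_{n-j}^n\theta^{j-k}$, which is neither geometric nor amenable to \eqref{mlineq}.
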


\begin{proof}
In the following proof, we need some useful properties of the kernel $P_j^n$
provided in Appendix \ref{sec:80}, wherein the complete details can be found.
Here we simply make references to the equations to be used.

For simplicity, we define
\[
F_n = 2E_{\nu}(24\pi_B x_n^{\nu}), \qquad G_n =
  |\bar{e}_2| + 2 \max_{3 \leq k \leq n} \sum_{j=3}^k P_{k-j}^k |\tilde{r}_j(\Delta x)|.
\]
Then both $F_n$ and $G_n$ are monotonically increasing with respect to $n$. Below we are going to prove the lemma using mathematical induction. Since $E_{\nu}(z) > 1$ for all $z > 0$, it is obvious that \eqref{ebarn} holds for $n = 2$. Now we assume that $n > 2$ and the estimation \eqref{ebarn} holds for all
$\bar{e}_2, \bar{e}_3, \cdots, \bar{e}_{n-1}$. Let
\[
|\bar{e}_{k(n)}| = \max_{2 \leq j \leq n-1} |\bar{e}_j|.
\]
If $|\bar{e}_n| \leq |\bar{e}_{k(n)}|$, then the monotonicity of $F_n$ and $G_n$ shows that
\[
|\bar{e}_n| \leq |\bar{e}_{k(n)}| \leq F_{k(n)} G_{k(n)} \leq F_n G_n.
\]
If $|\bar{e}_n| > |\bar{e}_{k(n)}|$, then by the inequality \eqref{bare_n},
\beq \label{en}
|\bar{e}_n|^2 \leq |\bar{e}_n| \left( \sum_{j=3}^{n-1} P_{n-j}^n
  \sum_{k=3}^j 4L \theta^{j-k} |\bar{e}_k| +
  P_0^n \sum_{k=3}^n 4L \theta^{n-k} |\bar{e}_n| +
  |\bar{e}_2| + 2\sum_{j=3}^n P_{n-j}^n \cdot |\tilde{r}_j(\Delta x)|
 \right).
\eeq
Using \eqref{plow}\eqref{deltax} and $\theta < 2/3$, we have
\[
P_0^n \sum_{k=3}^n 4L \theta^{n-k} < \pi_B \Delta x^{\nu} \cdot 12L \leq \frac{1}{2}.
\]
Thus according to \eqref{en}, we can estimate $\bar{e}_n$ as follows:
\begin{align*}
|\bar{e}_n| &\leq 2\left( \sum_{j=3}^{n-1} P_{n-j}^n
  \sum_{k=3}^j 4L \theta^{j-k} |\bar{e}_k| +
  |\bar{e}_2| + 2\sum_{j=3}^n P_{n-j}^n \cdot |\tilde{r}_j(\Delta x)|
 \right) \\
& \leq 2 \sum_{j=3}^{n-1} P_{n-j}^n
  \sum_{k=3}^j 4L \theta^{j-k} F_k G_k + 2G_n
  \leq 2 \sum_{j=3}^{n-1} P_{n-j}^n
  \sum_{k=3}^j 4L \theta^{j-k} F_j G_n + 2G_n \\
& \leq \left( 24L \sum_{j=3}^{n-1} P_{n-j}^n F_j + 2 \right) G_n
  = \left( 48L \sum_{j=3}^{n-1} P_{n-j}^n E_{\nu}(24\pi_B L x_j^{\nu}) + 2 \right) G_n.
\end{align*}
Finally, we use \eqref{mlineq} to find that
\[
|\bar{e}_n| \leq
  \left( 48\pi_B L \frac{E_{\nu}(24\pi_B L x_n^{\nu})-1}{24\pi_B L} + 2 \right) G_n
  = 2E_{\nu}(24\pi_B L x_n^{\nu}) G_n = F_n G_n.
\]
Thus the lemma is proven by the principle of mathematical induction.
\end{proof}

Our final error estimation can be achieved by combining the above result with
our estimation of the truncation error, and the conclusion is given in the
following theorem:

\begin{theorem}\label{conver}
Let $y$ be the exact solution of \eqref{pb}\eqref{caputo}, and $\{y_k\}_{k=0}^{2N}$ be the numerical solution obtained by \eqref{scheme}.
Assume $y(x)\in C^3[0,T]$. If the step size $\Delta x$ satisfies
\be\label{steps}
\Delta x^{\nu} \leq \frac{1}{24\pi_B L},
\ee
then there exsits a constant $K$ depending on $\nu$, $L$ and the final time $T$, such that
\be\label{ydwx}
|y(x_k)-y_k|\le K\Delta x^{3-\nu}, \quad \text{for all } k=1,2,\cdots,2N.
\ee
\end{theorem}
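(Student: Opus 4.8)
The plan is to combine Lemma~\ref{ebar}, which controls $|\bar{e}_n|$, with the estimates on the truncation errors from Section~\ref{sec:4.2}, and then pass back from $\bar{e}_n$ to $e_n$ using the telescoping identity \eqref{yk}. First I would bound the quantity $G_n$ appearing in Lemma~\ref{ebar}. By Lemma~\ref{e1e2}, $|\bar{e}_2| = |e_2 - \theta e_1| \leq |e_2| + |e_1| \leq 20 C_1 \Delta x^3$, which is of higher order than $\Delta x^{3-\nu}$ and hence harmless. For the second piece of $G_n$, I would use the bound $|\tilde{r}_j(\Delta x)| \leq C \Delta x^{3-\nu}$ together with the summability estimate for the discrete kernel $P_{k-j}^k$; specifically, an inequality of the form $\sum_{j=3}^k P_{k-j}^k \leq C' x_k^{\nu}$ (which follows from \eqref{plow} or its refinements in the appendix, analogous to \cite[Lemma~2.1]{liao2019dis}) gives
\[
2 \max_{3 \leq k \leq n} \sum_{j=3}^k P_{k-j}^k |\tilde{r}_j(\Delta x)|
  \leq 2 C \Delta x^{3-\nu} \cdot C' T^{\nu}.
\]
Therefore $G_n \leq C'' \Delta x^{3-\nu}$ for a constant $C''$ depending only on $C_1$, $C$, $T$ and $\nu$.

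Next I would feed this into \eqref{ebarn}. Since $x_n \leq T$ and $E_{\nu}$ is increasing, $F_n = 2 E_{\nu}(24\pi_B L x_n^{\nu}) \leq 2 E_{\nu}(24\pi_B L T^{\nu})$, a constant depending on $\nu$, $L$, $T$. Hence Lemma~\ref{ebar} yields
\[
|\bar{e}_n| \leq 2 E_{\nu}(24\pi_B L T^{\nu}) \, C'' \, \Delta x^{3-\nu} =: K_1 \Delta x^{3-\nu}, \qquad n = 0,1,\cdots,2N,
\]
where the cases $n = 0,1,2$ are covered either trivially ($e_0 = 0$) or by Lemma~\ref{e1e2} (which gives an even stronger bound). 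Then I would recover $e_n$ from $\bar{e}_n$ via the summation formula $e_n = \sum_{k=0}^n \theta^{n-k} \bar{e}_k$, exactly as in the proof of Theorem~\ref{stab1}: since $\theta = 2\nu/(2+\nu) < 2/3 < 1$,
\[
|e_n| \leq \sum_{k=0}^n \theta^{n-k} |\bar{e}_k| \leq \frac{1}{1-\theta} \max_{0 \leq k \leq n} |\bar{e}_k| \leq \frac{2+\nu}{2-\nu} K_1 \Delta x^{3-\nu}.
\]
Setting $K = \frac{2+\nu}{2-\nu} K_1$ finishes the proof.

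The main obstacle I anticipate is the bookkeeping around the discrete kernel bound $\sum_{j=3}^k P_{k-j}^k |\tilde{r}_j| \lesssim \Delta x^{3-\nu}$. The truncation error $\tilde{r}_j$ is essentially uniform in $j$ (it is $O(\Delta x^{3-\nu})$ at every step, the whole point of the scheme), so what is really needed is the uniform-in-$n$ bound on $\sum_{j} P_{k-j}^k$, which is the content of the appendix results referenced after \eqref{ml}; one must be careful that the modified truncation error $\tilde{r}_j$ defined in \eqref{trj} — which absorbs the first-two-step corrections — is genuinely $O(\Delta x^{3-\nu})$ and not larger, but this has already been established in the theorem following Lemma~\ref{e1e2}. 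A secondary subtlety is making sure the induction in Lemma~\ref{ebar} and the step-size restriction \eqref{deltax} are compatible with the restriction $\Delta x^{\nu} < (10|L|)^{-1}$ needed for the truncation estimate; since $24\pi_B L = 216 L > 10 L$, the hypothesis \eqref{steps} is the stronger one and implies all the others, so no extra assumption is needed. Beyond that, the argument is a routine assembly of the lemmas already proved.
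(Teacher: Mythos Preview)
Your proposal is correct and follows essentially the same route as the paper: bound $G_n$ using the truncation estimate \eqref{rjdx} together with a kernel-sum bound, plug into Lemma~\ref{ebar}, and recover $e_n$ from $\bar{e}_n$ via the geometric sum in $\theta$. One small sharpening: the inequality $\sum_{j=3}^k P_{k-j}^k \leq C' x_k^{\nu}$ does \emph{not} follow from the pointwise bound \eqref{plow} alone (that only gives $O(k\Delta x^{\nu})$, not $O(x_k^{\nu})$); the paper obtains the needed control by inserting the weight $\omega_{1-\nu}(x_j)$ and invoking \eqref{p1low}, which is exactly the ``refinement in the appendix'' you allude to.
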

\begin{proof}
Combining \eqref{p1low}\eqref{rjdx} and \eqref{omega}, we have
\begin{align*}
\sum_{j=3}^k P_{k-j}^k |\tilde{r}_j(\Delta x)| &\leq
  \left( \max_{1\leq j \leq k} \frac{1}{\omega_{1-\nu}(x_j)} \right)
  \sum_{j=1}^k P_{k-j}^k \omega_{1-\nu}(x_j) |\tilde{r}_j(\Delta x)| \\
& \leq \Gamma(1-\nu) x_k^{\nu} \cdot C \Delta x^{3-\nu} \sum_{j=1}^k P_{k-j}^k \omega_{1-\nu}(x_j)
  \leq \left[ C \pi_B \Gamma(1-\nu) x_k^{\nu} \right] \Delta x^{3-\nu}.
\end{align*}
Substituting this estimate into \eqref{ebarn} yields
\beq
\begin{split}
|\bar{e}_n| &\leq 2E_{\nu}(24\pi_B L x_n^{\nu}) \Big(
  |\bar{e}_2| + \left[ 2C \pi_B \Gamma(1-\nu) x_n^{\nu} \right] \Delta x^{3-\nu} \Big) \\
& \leq 2E_{\nu}(24\pi_B L x_n^{\nu}) \Big(
  |e_1| + |e_2| + \left[ 2C \pi_B \Gamma(1-\nu) x_n^{\nu} \right] \Delta x^{3-\nu} \Big) \\
& \leq 2E_{\nu}(24\pi_B L x_n^{\nu})
  \Big( 10C_1 + 2C \pi_B \Gamma(1-\nu) x_n^{\nu} \Big) \Delta x^{3-\nu}, \qquad \text{for all } n \geq 2,
\end{split}
\eeq
where we have used the estimation \eqref{e1pe2}. Therefore, the numerical error $|e_k|$ can be estimated by
\[
|e_k| = \left| \sum_{n=0}^k \theta^{k-n} \bar{e}_n \right|
  \leq 3 \max_{0\leq n \leq k} \bar{e}_n \leq 6E_{\nu}(24\pi_B L x_k^{\nu})
  \Big( 10C_1 + 2C \pi_B \Gamma(1-\nu) x_k^{\nu} \Big) \Delta x^{3-\nu}.
\]
The proof is completed.
\end{proof}
\section {\bf Numerical results}
\label{sec:5}

In this section, we present numerical experiments to verify the theoretical results obtained in the previous sections.

\begin{example}\label{li1}
We consider the problem \eqref{pb} with
\[
f(x,y(x))=\frac{\Gamma(4+\nu)}{6}x^3, \qquad y(0) = 0,
\]
where $f$ is independent of $y$. It can be verified that the exact solution is
$y(x)=x^{3+\nu}$.  The computation is carried out up to $T = 1$. In our tests,
we choose $\nu = 0.3, 0.5, 0.8$ and $0.99$, and for all choices of $\nu$, we
choose the step size to be $\Delta x=\frac{1}{2^l},l=3,4,\cdots,10$. The error
we will display is defined by
\begin{displaymath}
e_{\Delta x} = \max_{k = 1,\cdots,2N} |y(x_k) - y_k|,
\end{displaymath}
where $2N = T / \Delta x$.

By this example, we would like to check the convergence order of the numerical
method with respect to the order of the fractional derivative $\nu$. The
results are given in Table \ref{exa001}, where the convergence order is
computed by $\log_2 (e_{2\Delta x} / e_{\Delta x})$. By Theorem \ref{conver},
we expect that this number is close to $3-\nu$. It is obvious that our
numerical results are consistent with the theoretical analysis.
\begin{table}
\centering
\caption{Maximum error $e_{\Delta x}$ and convergence order for Example \ref{li1}.} \label{exa001}
\begin{tabular}{ccccccccc}
\hline
${\Delta x}$& $\nu=0.3$ & order & $\nu=0.5$ & order & $\nu=0.8$ & order & $\nu=0.99$ & order  \\
\hline
$\frac{1}{8} $ &1.6782e-3& -     &5.8967e-3& -  &2.3580e-2&-&4.7431e-2&-\\
$\frac{1}{16}$ &2.7683e-4&2.5998&1.1467e-3&2.3623&5.8213e-3&2.0181&1.3486e-2&1.8143\\
$\frac{1}{32}$ &4.3876e-5&2.6575&2.1076e-4&2.4438&1.3329e-3&2.1267&3.5413e-3&1.9291\\
$\frac{1}{64}$ &6.8430e-6&2.6807&3.7908e-5&2.4750&2.9674e-4&2.1673&9.0195e-4&1.9731\\
$\frac{1}{128}$&1.0596e-6&2.6910&6.7551e-6&2.4884&6.5272e-5&2.1846&2.2667e-4&1.9924\\
$\frac{1}{256}$&1.6356e-7&2.6957&1.1986e-6&2.4945&1.4278e-5&2.1926&5.6613e-5&2.0014\\
$\frac{1}{512}$&2.5195e-8&2.6986&2.1228e-7&2.4974&3.1153e-6&2.1963&1.4096e-5&2.0057\\
$\frac{1}{1024}$&3.8778e-9&2.6998&3.7565e-8&2.4985&6.7888e-7&2.1981&3.5049e-6&2.0078\\
\hline
\end{tabular}
\end{table}
\end{example}
\begin{example}\label{li2}
In this example, we add the dependence on $y$ to the right-hand side $f(x,y)$.
The following two functions are considered:
\begin{align}
\label{fxx}
f(x,y(x))&= \frac{\Gamma(4+\nu)}{6}x^3+x^{3+\nu}-y(x),\\
\label{ffxx}
f(x,y(x))&= \frac{\Gamma(4+\nu)}{6}x^3+x^{6+2\nu}-y^2(x).
\end{align}
This two right-hand sides correspond to linear and nonlinear dependences on
$y$. With the initial condition $y(0) = 0$, the exact solution of both is
$y(x)=x^{3+\nu}$.

We take $T=1$ again and repeat the calculation in Example \ref{li1}. The
numerical error is provided in Table \ref{exa002} and Table \ref{exa003}.  Due
to the sufficient smoothness of the numerical solution, we again observe a good
agreement with the theoretical convergence order. In particular, it is worth
emphasizing that the non-linearity of $f$ seems to have no impact on the
numerical order of the scheme.

\begin{table}[!ht]
\centering
\caption{Maximum error $e_{\Delta x}$ and convergence order for the right-hand side \eqref{fxx}.} \label{exa002}
\begin{tabular}{ccccccccc}
\hline
${\Delta x}$& $\nu=0.3$ & order & $\nu=0.5$ & order & $\nu=0.8$ & order & $\nu=0.99$ & order \\
\hline
$\frac{1}{8} $&8.9242e-4& -     &3.4577e-3& - &1.6357e-2&-&3.6070e-2&- \\
$\frac{1}{16} $&1.4371e-4&2.6345&6.5136e-4&2.4083&3.9150e-3&2.0628&1.0036e-2&1.8455\\
$\frac{1}{32}$&2.2556e-5&2.6715&1.1826e-4&2.4614&8.8578e-4&2.1439&2.6115e-3&1.9422\\
$\frac{1}{64}$&3.5029e-6&2.6868&2.1163e-5&2.4824&1.9621e-4&2.1744&6.6251e-4&1.9788\\
$\frac{1}{128}$&5.4140e-7&2.6937&3.7628e-6&2.4916&4.3066e-5&2.1878&1.6619e-4&1.9950\\
$\frac{1}{256}$&8.3492e-8&2.6969&6.6703e-7&2.4959&9.4114e-6&2.1940&4.1471e-5&2.0026\\
$\frac{1}{512}$&1.2854e-8&2.6993&1.1806e-7&2.4981&2.0524e-6&2.1970&1.0322e-5&2.0063\\
$\frac{1}{1024}$&1.9781e-9&2.7000&2.0887e-8&2.4989&4.4715e-7&2.1984&2.5659e-6&2.0081\\
\hline
\end{tabular}
\end{table}
\begin{table}[!ht]
\centering
\caption{Maximum error $e_{\Delta x}$ and convergence order for the right-hand side \eqref{ffxx}.} \label{exa003}
\begin{tabular}{ccccccccc}
\hline
${\Delta x}$& $\nu=0.3$ & order & $\nu=0.5$ & order & $\nu=0.8$ & order & $\nu=0.99$ & order \\
\hline
$\frac{1}{8} $&9.1405e-4& -     &3.2126e-3& - &1.5357e-2&-&3.4906e-2&- \\
$\frac{1}{16} $&1.6188e-4&2.4972&6.4829e-4&2.3090&3.8037e-3&2.0134&1.0094e-2&1.7898\\
$\frac{1}{32}$&2.6226e-5&2.6258&1.2091e-4&2.4226&8.7214e-4&2.1247&2.6623e-3&1.9228\\
$\frac{1}{64}$&4.1349e-6&2.6651&2.1873e-5&2.4667&1.9417e-4&2.1672&6.7852e-4&1.9722\\
$\frac{1}{128}$&6.4327e-7&2.6843&3.9072e-6&2.4849&4.2704e-5&2.1848&1.7050e-4&1.9925\\
$\frac{1}{256}$&9.9504e-8&2.6926&6.9413e-7&2.4928&9.3407e-6&2.1927&4.2578e-5&2.0016\\
$\frac{1}{512}$&1.5350e-8&2.6964&1.2299e-7&2.4965&2.0379e-6&2.1964&1.0600e-5&2.0059\\
$\frac{1}{1024}$&2.3643e-9&2.6987&2.1774e-8&2.4978&4.4407e-7&2.1982&2.6356e-6&2.0079\\
\hline
\end{tabular}
\end{table}
\end{example}
\begin{example}\label{li3}
In this example, we consider the eigenvalue problem with right-hand side
$f(x,y)=\lambda y(x)$, where $\lambda < 0$ is a constant. The exact solution is
$y(x)=y_0 E_{\nu}(\lambda x^\nu)$, and $E_{\nu}(\cdot)$ is the Mittag-Leffler
function defined in \eqref{ml}. When $\nu=1$, The fractional derivative reduces
to the ordinary derivative, and the exact solution turns out to be solution is
given as $y(x)=y_0e^{\lambda x}$.

In our test, we set the eigenvalue $\lambda = -1$ and choose the inital value
$y_0 = 1$. The choices of the fractional order are now taken as $\nu = 0.3,
0.6, 0.9$ and $1.0$. Other settings are the same as previous two examples.
When $\nu = 1.0$, our numerical method reduces to the second-order backward
differentiation formula (BDF2). Results are given in Table \ref{exa004}, from
which we can observe that when $\nu < 1$, the convergence order is close to
$\nu$. The reason lies in the singularity of the Mittag-Leffler function at $x
= 0$. When $\nu = 1$, the singularity disappears, and the convergence order $3
- \nu$ is restored.
\begin{table}[!ht]
\centering
\caption{Maximum error $e_{\Delta x}$ and convergence order for Example \ref{li3}.} \label{exa004}
\begin{tabular}{ccccccccc}
\hline
${\Delta x}$& $\nu=0.3$ & order &$\nu=0.6$ & order &$\nu=0.9$ & order &$\nu=1.0$  & order    \\
\hline
$\frac{1}{8}$&3.2510e-3&-&8.8351e-4&-&2.1988e-3&- &3.8804e-4&- \\
$\frac{1}{16} $&2.8864e-3&0.1716&6.6298e-4&0.4143&9.7373e-4&1.1751&2.9709e-4&0.3853\\
$\frac{1}{32}$&2.5263e-3&0.1922&4.6140e-4&0.5229&4.5730e-4&1.0903&9.7657e-5&1.6051\\
$\frac{1}{64}$&2.1840e-3&0.2100&3.1026e-4&0.5725&2.2952e-4&0.9945&2.7213e-5&1.8434\\
$\frac{1}{128}$&1.8684e-3&0.2252&2.0569e-4&0.5930&1.2005e-4&0.9350&7.1461e-6&1.9290\\
$\frac{1}{256}$&1.5842e-3&0.2380&1.3568e-4&0.6003&6.3888e-5&0.9100&1.8289e-6&1.9661\\
$\frac{1}{512}$&1.3332e-3&0.2488&8.9370e-5&0.6023&3.4223e-5&0.9006&4.6252e-7&1.9834\\
$\frac{1}{1024}$&1.1150e-3&0.2578&5.8861e-5&0.6025&1.8362e-5&0.8982&1.1628e-7&1.9918\\
\hline
\end{tabular}
\end{table}

The convergence order can be improved by Lubich's method \cite{lub1986} to
include singular terms in the ansatz of the solution. This is achieved by
choosing a finite sequence of positive real numbers $\sigma_1 < \sigma_2 <
\cdots < \sigma_{m+1}$, and assume that
\beq\label{coy}
y(x)-y(0)=\sum_{j=1}^{m}c_jx^{\sigma_j}+x^{\sigma_{m+1}}\tilde{y}(x),
\eeq
where $\tilde{y}(x)$ is a bounded function, and we assume that the term
$x^{\sigma_{m+1}}\tilde{y}(x)$ is sufficiently smooth to retain our convergence
order. The sum of $c_jx^{\sigma_j}$ captures the less smooth part, for which
the discretization of the fractional derivative needs to be altered to get
better accuracy. Here we omit the detailed derivation, and refer the readers to
\cite{lub1986,zeng2018s,zeng2017s} for more discussions on the correction
method. The final numerical scheme discretizes the fractional derivative by
\beq\label{jzy}
_0D_{\Delta x}^{\nu,m}y_n ={} _0D_{\Delta x}^{\nu}y_n
+{\Delta x}^{-\nu}\sum_{j=1}^{m}W_{n,j}(y_j-y_0),
\eeq
where $W_{n,j}$ are the starting weights that are chosen such that
\beq\label{yyjz}
_0D_{\Delta x}^{\nu} q_k(x_n)
+{\Delta x}^{-\nu}\sum_{j=1}^{m}W_{n,j}q_k(x_j)=\frac{\Gamma(1+\sigma_k)}{\Gamma(1-\nu+\sigma_k)}x_n^{\sigma_k-\nu},
  \qquad \text{for all } k = 1,\cdots,m,
\eeq
where $q_k(x)=x^{\sigma_k}$.

In this example, we choose $\sigma_k=k\nu$. Then $W_{n,j}$, $1\leq j\leq m$ can be solved from \eqref{yyjz}, and the values of $W_{n,j}$ are independent of $\Delta x$. Since the series expansion of the exact solution includes terms such as $x^{\nu}$ and $x^{2\nu}$, Lubich's correction method is suitable for such a problem. The results of the corrected method are given in Table \ref{exa005}, which shows remarkable improvement compared with Table \ref{exa004}.

\begin{table}[!ht]
\centering
\caption{Maximum error and convergence order of the corrected method for Example \ref{li3}.} \label{exa005}
\begin{tabular}{ccccccc}
\hline
${\Delta x}$& $\nu=0.3$ & order &$\nu=0.6$ & order &$\nu=0.9$ & order \\
\hline
$\frac{1}{8}$&2.4932e-6&-&4.2141e-5&-&1.2940e-4&- \\
$\frac{1}{16}$&8.5679e-7&1.5409&1.7729e-5&1.2491&7.0189e-5&8.8254e-01\\
$\frac{1}{32}$&2.8365e-7&1.5947&5.0652e-6&1.8074&2.3691e-5&1.5668\\
$\frac{1}{64}$&9.0097e-8&1.6546&1.2249e-6&2.0479&6.6215e-6&1.8391\\
$\frac{1}{128}$&2.7462e-8&1.7140&2.7037e-7&2.1796&1.6940e-6&1.9667\\
$\frac{1}{256}$&8.0536e-9&1.7697&5.6509e-8&2.2583&4.1466e-7&2.0304\\
$\frac{1}{512}$&2.2805e-9&1.8202&1.1354e-8&2.3152&9.9291e-8&2.0622\\
$\frac{1}{1024}$&6.2613e-10&1.8648&2.5311e-9&2.1654&2.3508e-8&2.0785\\
\hline
\end{tabular}
\end{table}
\end{example}
\section{Conclusion} \label{sec:6}

An efficient high-order approximate numerical scheme for fractional ordinary
differential equations with the Caputo derivative has been introduced in this
paper. The scheme is unconditionally stable and has uniform accuracy for all
time steps. The proof of stability shows the technical details on how to deal
with the special initial steps. The sharp numerical order $3-\nu$ is proven for
sufficiently smooth solutions and general nonlinear equations, and this order
is verified by our numerical experiments.

%

\appendix

\section{Proof of some inequalities} \label{sec:70}
In this appendix, we provide the proofs of two lemmas used in the stability
analysis, which include a number of technical inequalities.
\begin{lemma}\label{xis0}
For any $k\geq2$, it holds that
\begin{align*}
(1)~&  \left( 1 - \frac{1}{k} \right)^{1-\nu} + \left( 1 + \frac{1}{k} \right)^{1-\nu}
  \geq
  2-\frac{1-\nu}{2k^2}\left[2^\nu-\left(\frac{2}{3}\right)^\nu\right],\\
(2)~& \left(1-\frac{1}{k}\right)^{2-\nu}-\left(1+\frac{1}{k}\right)^{2-\nu}\geq
  -2(2-\nu)\frac{1}{k}+\frac{(2-\nu)(1-\nu)\nu}{3k^3},
\\
(3)~& \frac{2-\nu}{2}\frac{1}{2k}\left[\left(1-\frac{2}{2k}\right)^{1-\nu}+3-4\left(1+\frac{1}{2k}\right)^{1-\nu}\right]
+\left(1-\frac{2}{2k}\right)^{2-\nu}-3+2\left(1+\frac{1}{2k}\right)^{2-\nu}\geq0,
\\
(4)~&-\nu^2-12+3\left(\frac{2}{3}\right)^\nu(\nu^2+2\nu+4)>0,
\\
(5)~&6-\nu-\left(2+\frac{\nu}{2}\right)2^\nu3^{1-\nu}<0,
\\
(6)~&{-2}\nu^3+12\nu^2-56\nu-48+3\left(\frac{2}{3}\right)^\nu(3\nu^3+4\nu^2+20\nu+16)<0,
\\
(7)~& 2^{1-\nu} [4-\nu-(2+\nu)2^{1-\nu}] < \frac{1}{27} (2\nu-3)(2-\nu)(1-\nu)\nu,
\\
(8)~&12-\nu^2-(12+8\nu+\nu^2)2^{-\nu} > \frac{1}{16} (2+\nu)(2-\nu)(1-\nu)\nu.
\end{align*}
\end{lemma}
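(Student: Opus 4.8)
The plan is to treat each of the eight inequalities separately, since they are of two qualitatively different types: items (1)--(3) and (7)--(8) are ``small-argument expansion'' estimates that hold for all $\nu\in(0,1)$ and (for (1)--(3)) all integers $k\geq 2$, while items (4)--(6) are inequalities in the single variable $\nu\in(0,1)$ with no $k$-dependence. For the latter group, the natural approach is the same convexity/concavity trick used repeatedly in the main text: write each expression as $\phi(\nu)$, compute $\phi''(\nu)$ and show it has a fixed sign on $(0,1)$, then compare $\phi$ against its secant line (if $\phi$ is concave, $\phi(\nu)\geq(1-\nu)\phi(0)+\nu\phi(1)$; if convex, the reverse), reducing the claim to checking the two endpoint values $\phi(0)$ and $\phi(1)$ together with a sign computation for $\phi''$. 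The only subtlety is that $\phi''$ itself contains terms like $(2/3)^\nu(\log(2/3))^2$ and $2^\nu(\log 2)^2$, so one may need a second round of the same argument (bounding $\phi''$ via its own endpoints, or via $a^\nu\leq 1+\nu(a-1)$ for $0<a<1$ and $a^\nu\geq 1+\nu\log a$) to pin down its sign; I expect (6), with its cubic polynomial factor, to require the most care here.

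For items (1), (2) and (7), (8), the plan is to use the binomial series. Writing $t=1/k\in(0,1/2]$ for (1)--(2), expand $(1\pm t)^{1-\nu}$ and $(1\pm t)^{2-\nu}$ as convergent alternating-type series in $t$; the leading terms give exactly the stated right-hand sides, and the tail must be controlled. The cleanest route is to group the series two terms at a time so that each grouped term has a definite sign, which works because the binomial coefficients $\binom{1-\nu}{j}$ and $\binom{2-\nu}{j}$ have controlled sign patterns for $\nu\in(0,1)$ and $t\leq 1/2$ ensures the ratio of successive terms is $<1$; this is precisely the maneuver already used in Case 1 of the proof of Lemma \ref{xis1}(2) (the inequality \eqref{et}). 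For (7) and (8), which are pure $\nu$-inequalities again, I would reuse the convexity strategy: e.g. for (8), set $\psi(\nu)=12-\nu^2-(12+8\nu+\nu^2)2^{-\nu}-\tfrac1{16}(2+\nu)(2-\nu)(1-\nu)\nu$, verify $\psi(0)=\psi(1)=0$, and show $\psi$ is concave on $(0,1)$ so that $\psi\geq 0$ in between — the factor $(1-\nu)\nu$ on the correction term is exactly what makes the endpoints vanish, which is the hint that this is the intended mechanism. An alternative for (7)--(8), if the second-derivative sign is awkward, is to substitute $2^{-\nu}=e^{-\nu\log 2}$ and use a truncated Taylor bound with explicit remainder.

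Item (3) is the genuinely mixed case: it involves both $k\geq 2$ and $\nu\in(0,1)$, and it is the one I expect to be the main obstacle. The approach is still the binomial expansion in $t=1/(2k)$: expand $(1-2t)^{1-\nu}$, $(1+t)^{1-\nu}$, $(1-2t)^{2-\nu}$, $(1+t)^{2-\nu}$, multiply out, and collect powers of $t$. The constant and $t^1$ terms should cancel (this is the content of the scheme being exact on polynomials of low degree, so a cancellation is expected), and one must show the remaining series is nonnegative. The difficulty is that, unlike (1)--(2), the expression mixes two different binomial exponents with different arguments ($-2t$ versus $+t$), so the sign pattern of the combined coefficients is less transparent; I anticipate needing to pair terms carefully and possibly split into the leading correction term (whose coefficient should be a positive multiple of $(2-\nu)(1-\nu)\nu$, checked via a short $\nu$-convexity argument as above) plus a tail that is dominated using $t\leq 1/4$. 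If the direct series grouping resists, a fallback is to fix $\nu$ and view the left-hand side as a function of the continuous variable $k\geq 2$, show it is monotone (decreasing toward its limit $0$ as $k\to\infty$) by differentiating in $k$, and thereby reduce to verifying nonnegativity of the limit together with, say, the value at $k=2$; monotonicity in $k$ would itself follow from a sign analysis of the derivative, again reducible to a one-variable convexity estimate in $\nu$.
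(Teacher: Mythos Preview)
Your overall plan --- binomial expansions for the $k$-dependent items and derivative/sign analysis for the $\nu$-only items --- matches the paper's structure, but several of your specific mechanisms are off.

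For (1), your claim that ``the leading terms give exactly the stated right-hand sides'' is false: the bracket $[2^\nu-(2/3)^\nu]$ is not the $t^2$ Taylor coefficient of $(1-t)^{1-\nu}+(1+t)^{1-\nu}$ (that coefficient is $-(1-\nu)\nu$, which is \emph{more} negative), but rather arises from evaluating at the boundary $t=1/2$, i.e., $k=2$. The paper does not truncate a series; it differentiates, writes $f_1'(t)=-4t[f_2(t)-A]$ with $f_2$ explicitly increasing in $t$, and uses $f_2(t)\le f_2(1/2)$ on $(0,1/2]$ to get $f_1'\ge 0$. Your ``group two terms at a time'' scheme cannot work here because even the first retained term points the wrong way.

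For (5), (6) and (8), the premise that $\phi''$ has a fixed sign on $(0,1)$ is false in each case, so the secant-line comparison does not apply. The paper instead climbs to a derivative whose sign \emph{is} fixed (second for (5), fourth for (6), third for (8)) and then descends, alternating between ``derivative has fixed sign $\Rightarrow$ next one is monotone'' and ``monotone plus endpoint values $\Rightarrow$ sign pattern'', ultimately concluding that $g$ first decreases then increases (for (5), (6)) so $g<\max(g(0),g(1))=0$, or that $g$ is monotone decreasing from $g(0)=0$ (for (8)). For (8) in particular your check is wrong: $\psi(1)=12-1-\tfrac{21}{2}-0=\tfrac12\neq 0$, and $\psi$ is convex, not concave, so your endpoint/concavity argument collapses; the monotonicity route via $\psi^{(3)}$ is what is actually needed. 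Item (7) does have both endpoints equal to zero, and the paper just says ``same method as (6)'', so your plan is closer there.

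For (3) you overestimate the difficulty. The paper first rewrites the $(1-\nu)$-power factors in terms of $(2-\nu)$-powers (using $(1-1/k)^{1-\nu}=\tfrac{k}{k-1}(1-1/k)^{2-\nu}$, etc.), then truncates the two resulting binomial series at $j=3$ --- the truncation gives \emph{lower} bounds because all summands from $j\ge 2$ share a sign --- and verifies by direct algebra that the remaining polynomial in $t=1/(2k)$ and $\nu$ is nonnegative. No fallback monotonicity-in-$k$ argument is needed. Items (2) and (4) go essentially as you sketch.
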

\begin{proof}
(1) This inequality is equivalent to
\beq \label{f1t}
f_1(t) := (1-t)^{1-\nu}+(1+t)^{1-\nu} - 2(1-A t^2) \geq 0
\eeq
for $t = 1/k$ and $A = \frac{1}{4}(1-\nu) [2^\nu - (2/3)^\nu]$. Since $k \geq 2$, the range of $t$ is $(0,1/2]$. To show \eqref{f1t}, we take the derivative of $f_1(t)$ to get
\beq \label{df1t}
f_1'(t) = -(1-\nu)[(1-t)^{-\nu}-(1+t)^{-\nu}]+4At = -4t[f_2(t) - A],
\eeq
where
\[
f_2(t) = \frac{1-\nu}{4} \frac{(1-t)^{-\nu}-(1+t)^{-\nu}}{t}
  = \frac{1-\nu}{4} \sum_{j=0}^{+\infty} \frac{2}{(2j+1)!} \nu(1+\nu)\cdots(2j+\nu) t^{2j}.
\]
The series expansion of $f_2$ clearly shows that $f_2$ is an increasing function, which yields $f_2(t) \leq f_2(1/2) = A$. Thus by \eqref{df1t}, we have $f_1'(t) \geq 0$, indicating that
\[
f_1(t) \geq f_1(0) = 0.
\]

(2) This inequality can be similarly proven by defining
\[
f_3(t)=(1-t)^{2-\nu}-(1+t)^{2-\nu}+2(2-\nu)t-\frac{(2-\nu)(1-\nu)\nu}{3} t^3,
\]
whose series expansion is
\[
f_3(t) = \sum_{j=2}^{+\infty} \frac{2}{(2j+1)!} (-2+\nu)(-1+\nu)\nu(1+\nu) \cdots (2j-2+\nu) t^{2j+1}.
\]
Since all the terms in the sum are monotonically increasing, we have $f_3(t) \geq f_3(0) = 0$. The proof is completed by setting $t = 1/k$.

(3) Let
\beq\label{f12k}
f_1(k,\nu)=\left(1-\frac{2}{2k}\right)^{2-\nu}+2\left(1+\frac{1}{2k}\right)^{2-\nu}, \qquad
f_2(k,\nu)=\left(1-\frac{2}{2k}\right)^{2-\nu}.
\eeq
Then the desired inequality is equivalent to
\[
f(k,\nu):=\left(1-\frac{2-\nu}{2k+1}\right)f_1(k,\nu)+
  \left[\frac{2-\nu}{4(k-1)}+\frac{2-\nu}{2k+1}\right]f_2(k,\nu)+\frac{3(2-\nu)}{4k}-3\geq0.
\]
Since $k \geq 2$, we can apply binomial expansion to $f_1$ to obtain
\[
f_1(k,\nu) = \sum_{j=0}^{+\infty}
  {2-\nu \choose j} [(-2)^j + 2] \left( \frac{1}{2k} \right)^j.
\]
It can be observed that when $j \geq 2$, the summand in the above sum is positive. Therefore
\beq \label{f1kn}
f_1(k,\nu) \geq \sum_{j=0}^3
  {2-\nu \choose j} [(-2)^j + 2] \left( \frac{1}{2k} \right)^j
= 3 + \frac{3(2-\nu)(1-\nu)}{4k^2} + \frac{(2-\nu)(1-\nu)\nu}{8k^3}.
\eeq
By similar means, we get
\beq \label{f2kn}
f_2(k,\nu) \geq 1 - \frac{2-\nu}{k} + \frac{(2-\nu)(1-\nu)}{2k^2} + \frac{(2-\nu)(1-\nu)\nu}{6k^3}.
\eeq
Plugging \eqref{f1kn} and \eqref{f2kn} into the expression of the $f(k,\nu)$, we get
\begin{align*}
f(k,\nu)&\geq \frac{(2-\nu)(1-\nu)\nu}{8k^3(k-1)(2k+1)} [-3k+(k-1)(2k-1+\nu)+(2k-1)(2-\nu)] \\
& \geq \frac{(2-\nu)(1-\nu)\nu}{8k^3(k-1)(2k+1)} [-3k+(k-1)(3+\nu)+3(2-\nu)] \\
& = \frac{(2-\nu)(1-\nu)\nu}{8k^3(k-1)(2k+1)} [(k-1)\nu+3(1-\nu)] \geq 0,
\end{align*}
which completes the proof.

(4) Let $\tilde{h}(\nu)=-\nu^2-12+3(\frac{2}{3})^\nu(\nu^2+2\nu+4)$. Then its first-order derivative is
\[
\tilde{h}'(\nu) = -2\nu+3\left(\frac{2}{3}\right)^\nu \left[2\nu+2+(\nu^2+2\nu+4)\log\frac{2}{3} \right].
\]
When $\nu \in (0,1)$, by the property of quadratic functions, one can show that $2\nu+2+(\nu^2+2\nu+4)\log(2/3) > 0$. Therefore
\[
\tilde{h}'(\nu) \geq -2\nu+2 \left[2\nu+2+(\nu^2+2\nu+4)\log\frac{2}{3} \right]
  = 2 \left[\nu+2+(\nu^2+2\nu+4)\log\frac{2}{3} \right].
\]
Again it can be shown by the property of quadratic functions that $\tilde{h}'(\nu) > 0$ for all $\nu \in (0,1)$. Thus
\[
\tilde{h}(\nu) > \tilde{h}(0) = 0.
\]

(5) Let $g(\nu)=6-\nu-(2+\nu/2)2^\nu3^{1-\nu}$. We want to prove $g(\nu)<0$. The first-order and second-order derivatives of $g$ are
\begin{align*}
g'(\nu) &= -1-\left(\frac{2}{3}\right)^{\nu-1}-\left(6+\frac{3}{2}\nu\right)\left(\frac{2}{3}\right)^\nu\log\frac{2}{3}, \\
g''(\nu) &= -\left( \frac{2}{3} \right)^{\nu-1}
  \left( 2+\log\frac{16}{81}+\nu \log \frac{2}{3} \right) \log \frac{2}{3}.
\end{align*}
It is clear that $g''(\nu)$ changes from positive to negative as $\nu$ varies from $0$ to $1$. Therefore $g'(\nu)$ first increases and then decreases. By straightforward calculation, we see that $g'(0) < 0$ and $g'(1) > 0$, meaning that $g(\nu)$ first decreases and then increases. Therefore
\begin{displaymath}
g(\nu) < \max(g(0), g(1)) = 0.
\end{displaymath}

(6) Let $\tilde{g}(\nu)$ be the left-hand side. Then the fourth-order derivative of $\tilde{g}(\nu)$ is
\begin{align*}
\tilde{g}^{(4)}(\nu) &= 3\left( \frac{2}{3} \right)^{\nu} \log \frac{2}{3} \Bigg[
  72 + 6(18 \nu + 8) \log \frac{2}{3} \\
& \quad + 4(9\nu^2 + 8\nu + 20) \left( \log \frac{2}{3} \right)^2 +
  (3\nu^3 + 4\nu^2 + 20\nu + 16)\left( \log \frac{2}{3} \right)^3
\Bigg] \\
& < 3\left( \frac{2}{3} \right)^{\nu} \log \frac{2}{3}
  \left[ 72 + 156 \log \frac{2}{3} + 80\left( \log \frac{2}{3} \right)^2
    + 43 \left( \log \frac{2}{3} \right)^3 \right] < 0.
\end{align*}
Therefore $\tilde{g}'''(\nu)$ is monotonically decreasing for $\nu \in (0,1)$. Straightforward calculation yields $\tilde{g}'''(0) > 0$ and $\tilde{g}'''(1) < 0$, which indicates that $\tilde{g}''(\nu)$ first increases and then decreases. Since $\tilde{g}''(0) > 0$ and $\tilde{g}''(1) > 0$, we know that $\tilde{g}'(\nu)$ increases monotonically. Finally, using $\tilde{g}'(0) < 0$ and $\tilde{g}'(1) > 0$, one sees that $\tilde{g}(\nu)$ first decreases and then increases, which implies
\begin{displaymath}
\tilde{g}(\nu) < \max(\tilde{g}(0), \tilde{g}(1)) = 0.
\end{displaymath}
This completes the proof.

(7) This inequality can be proven using the same method as (6).

(8) Define
\[
g(\nu) = \frac{1}{16} (2+\nu)(2-\nu)(1-\nu)\nu - [12-\nu^2-(12+8\nu+\nu^2)2^{-\nu}].
\]
The third-order derivative of $g$ satisfies
\begin{align*}
g^{(3)}(\nu) &= 2^{-3-\nu} \left[
  2^{\nu} (12\nu - 3) - 48\log 2 + (192 + 48\nu) (\log 2)^2 - (8\nu^2 + 64\nu + 96) (\log 2)^3
\right] \\
& > 2^{-3-\nu} [-3 - 48 \log 2 + 192 (\log 2)^2 - 168 (\log 2)^3] > 0,
\end{align*}
which means $g''(\nu)$ is monotonically increasing. Using $g''(1) < 0$, we know that $g'(\nu)$ is an decreasing function. Finally, using $g'(0) < 0$, we know that $g'(\nu)$ is negative for all $\nu \in (0,1)$. Thus $g(\nu) < g(0) = 0$.
\end{proof}

\begin{lemma} \label{gineq}
Suppose $0 < b < 2m$. Let
\[
f(\nu) = (2-\nu) [a_1 (2m)^{1-\nu} + a_2 (2m+b)^{1-\nu}] +
  a_3 [(2m)^{2-\nu} - (2m+b)^{2-\nu}].
\]
Then we have
\begin{enumerate}
\item If $a_3 b / a_2 \leq 3/2$ and $a_2 < 0$, then
  \beq \label{lem}
  \begin{split}
  f(\nu) &< (2-\nu) (2m)^{1-\nu} \left[
    a_1 + a_2 - a_3 b + \sum_{k=1}^2 {1-\nu \choose k} \left( a_2 - \frac{a_3 b}{k+1} \right) \frac{b^k}{(2m)^k} \right] \\
  &< (2-\nu) (2m)^{1-\nu} (a_1 + a_2 - a_3 b).
  \end{split}
  \eeq
\item If $a_3 b = 2a_2 > 0$, then
  \beq \label{second_case}
  f(\nu) < (2-\nu) (2m)^{1-\nu} \left[
  a_1 + a_2 - a_3 b - a_2\left( \frac{b}{2m} \right)^2 \frac{(1-\nu)\nu}{6}
    \left( 1 - \frac{\nu+1}{2} \frac{b}{2m} \right)
  \right].
  \eeq
\end{enumerate}
\end{lemma}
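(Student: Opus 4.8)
The plan is to collapse $f(\nu)$ into a single power series and then read off both inequalities from the alternating signs of the binomial coefficients. First I would set $t := b/(2m)$, so that $t \in (0,1)$ by the hypothesis $0<b<2m$. Using $(2m+b)^{1-\nu} = (2m)^{1-\nu}(1+t)^{1-\nu}$, $(2m+b)^{2-\nu}=(2m)^{2-\nu}(1+t)^{2-\nu}$, and $(2m)^{2-\nu}=(b/t)(2m)^{1-\nu}$, one has
\[
\frac{f(\nu)}{(2m)^{1-\nu}} = (2-\nu)a_1 + (2-\nu)a_2(1+t)^{1-\nu} + a_3 b\,\frac{1-(1+t)^{2-\nu}}{t}.
\]
Expanding $(1+t)^{1-\nu}=\sum_{k\geq0}{1-\nu\choose k}t^k$ and $(1+t)^{2-\nu}-1=(2-\nu)\int_0^t(1+s)^{1-\nu}\,ds=(2-\nu)\sum_{k\geq0}\frac{1}{k+1}{1-\nu\choose k}t^{k+1}$ (both absolutely convergent for $|t|<1$), I expect to obtain the identity
\[
f(\nu) = (2-\nu)(2m)^{1-\nu}\left[a_1+a_2-a_3 b+\sum_{k=1}^{\infty}{1-\nu\choose k}\Bigl(a_2-\frac{a_3 b}{k+1}\Bigr)t^k\right].
\]
Throughout I would use that ${1-\nu\choose k}$ has sign $(-1)^{k-1}$ for $k\geq1$, with $\bigl|{1-\nu\choose k+1}\bigr|/\bigl|{1-\nu\choose k}\bigr| = \frac{k-1+\nu}{k+1}<1$, so the absolute values $\bigl|{1-\nu\choose k}\bigr|$ strictly decrease.

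For part (1), the hypotheses $a_2<0$ and $a_3 b/a_2\leq 3/2$ give $a_3 b\geq\frac32 a_2$, hence $(k+1)a_2\leq 2a_2<\frac32 a_2\leq a_3 b$ and therefore $a_2-\frac{a_3 b}{k+1}<0$ for every $k\geq1$. Writing the $k$-th term of the series as $(-1)^k u_k$ with $u_k:=\bigl|{1-\nu\choose k}\bigr|\bigl(\frac{a_3 b}{k+1}-a_2\bigr)t^k>0$, the claim reduces to proving that $(u_k)$ is strictly decreasing: then $\sum_{k\geq3}(-1)^k u_k=-(u_3-u_4)-(u_5-u_6)-\cdots<0$ gives the first inequality, and $-u_1+u_2=-(u_1-u_2)<0$ gives the second. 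To prove $u_{k+1}<u_k$, it suffices (using $t<1$, $\frac{a_3b}{k+2}-a_2>0$, and the binomial ratio) to check $(k-1+\nu)\bigl(\frac{a_3 b}{k+2}-a_2\bigr)\leq(k+1)\bigl(\frac{a_3 b}{k+1}-a_2\bigr)$; the difference of the right and left sides simplifies to $a_3 b\,\frac{3-\nu}{k+2}+a_2(\nu-2)$, which is $\geq 0$ --- trivially if $a_3 b\geq 0$, and otherwise via $a_3 b\geq\frac32 a_2$ it reduces to $3(3-\nu)\leq 2(k+2)(2-\nu)$, valid for all $k\geq1$ and $\nu\in(0,1)$.

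For part (2), substituting $a_3 b=2a_2>0$ gives $a_2-\frac{a_3 b}{k+1}=a_2\frac{k-1}{k+1}$, so the $k=1$ term vanishes; inserting $\bigl|{1-\nu\choose2}\bigr|=\frac{(1-\nu)\nu}{2}$ and $\bigl|{1-\nu\choose3}\bigr|=\frac{(1-\nu)\nu(1+\nu)}{6}$, the $k=2$ and $k=3$ terms combine into exactly $-a_2 t^2\frac{(1-\nu)\nu}{6}\bigl(1-\frac{\nu+1}{2}t\bigr)$. It then remains to show that the tail $(2-\nu)(2m)^{1-\nu}a_2\sum_{k\geq4}(-1)^{k-1}v_k$, with $v_k:=\bigl|{1-\nu\choose k}\bigr|\frac{k-1}{k+1}t^k>0$, is negative; since $(2-\nu)(2m)^{1-\nu}a_2>0$, this again follows once $(v_k)_{k\geq4}$ is shown to be strictly decreasing. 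By the binomial ratio and $t<1$ this reduces to $(k-1+\nu)k\leq(k-1)(k+2)$, i.e. $\nu\leq 2(k-1)/k$, which holds for $k\geq4$ since $2(k-1)/k\geq 3/2>1$.

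The step I expect to be the main obstacle is the monotonicity $u_{k+1}<u_k$ in part (1) in the regime $a_3 b<0$, where $\bigl|{1-\nu\choose k}\bigr|$ decreases but $\frac{a_3 b}{k+1}-a_2$ increases, so the two effects compete; this is precisely where the hypothesis $a_3 b/a_2\leq 3/2$ enters, and the algebraic reduction of the defect to the clean expression $a_3 b\,\frac{3-\nu}{k+2}+a_2(\nu-2)$ is the crux. Everything else --- the series identity and the two decreasing-sequence verifications in part (2) --- is routine bookkeeping with binomial coefficients together with the standard alternating-series estimate.
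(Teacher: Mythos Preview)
Your proof is correct and follows essentially the same route as the paper: both expand $f(\nu)$ via the binomial series to obtain the identity
\[
f(\nu)=(2-\nu)(2m)^{1-\nu}\Bigl[a_1+a_2-a_3b+\sum_{k\ge1}\binom{1-\nu}{k}\Bigl(a_2-\tfrac{a_3b}{k+1}\Bigr)t^k\Bigr],
\]
observe that the series is alternating with decreasing absolute values, and then read off the desired truncation bounds. Your key algebraic defect $a_3b\,\tfrac{3-\nu}{k+2}+a_2(\nu-2)$ is exactly the paper's expression $(2+k)(\nu-2)+\tfrac{a_3b}{a_2}(3-\nu)$ up to the positive factor $-a_2/(k+2)$, and your treatment of part~(2) (the $k=1$ term vanishes, keep $S_2+S_3$, bound the tail by monotonicity of $v_k$ for $k\ge4$) coincides with the paper's argument.
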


\begin{proof}
Since $b < 2m$, we can apply binomial expansion to get
\beq \label{binexp}
\begin{split}
f(\nu) &= (2-\nu) (2m)^{1-\nu} \left[ a_1 + a_2 \left( 1 + \frac{b}{2m} \right)^{1-\nu} \right]
  + a_3 (2m)^{2-\nu} \left[ 1 - \left( 1 + \frac{b}{2m} \right)^{2-\nu} \right] \\
&= (2-\nu) (2m)^{1-\nu} \left[ a_1 + a_2 - a_3 b + \sum_{k=1}^{+\infty}
  {1-\nu \choose k} \left( a_2 - \frac{a_3 b}{k+1} \right) \frac{b^k}{(2m)^k} \right].
\end{split}
\eeq
When $0 < b < 2m$ and $a_3 b / a_2 \leq 2$, then the above series is an alternating series. Denote the above series by $\sum_{k=1}^{+\infty} S_k$. Then by $b < 2m$, we see that
\[
|S_{k+1}| \leq \frac{(k+\nu-1)(k+2-a_3 b / a_2)}{(k+2)(k+1-a_3 b / a_2)} |S_k|.
\]
We want to show that the factor in front of $|S_k|$ is less than one, meaning that $\{|S_k|\}$ decreases monotonically. To show this, we take the difference between the numerator and the denominator:
\beq \label{num_den}
(k+\nu-1)(k+2-a_3 b / a_2) - (k+2)(k+1-a_3 b / a_2) = (2+k)(\nu-2) + \frac{a_3 b}{a_2} (3-\nu).
\eeq
Now we consider the two cases separately:

Case 1: If $a_3 b / a_2 \leq 3/2$ and $a_2 < 0$, then
\[
(2+k)(\nu-2) + \frac{a_3 b}{a_2} (3-\nu) \leq 3(\nu-2) + \frac{3}{2}(3-\nu) = \frac{3}{2}(\nu-1) < 0.
\]
Therefore $|S_{k+1}| \leq |S_k|$, indicating that the sign of the alternating series is determined by the sign of the first term. Using
\[
S_1 = (1-\nu) \left( a_2 - \frac{a_3 b}{2} \right) \frac{b}{2m}
  = (1-\nu) a_2 \left( 1 - \frac{a_3 b}{2a_2} \right) \frac{b}{2m} < 0,
\]
we conclude that the series in \eqref{binexp} is negative. Therefore
\[
f(\nu) < (2-\nu) (2m)^{1-\nu} (a_1 + a_2 - a_3 b + S_1 + S_2).
\]

Case 2: If $a_3 b = 2a_2 > 0$, we have $S_1 = 0$. We only need to study the sign of \eqref{num_den} when $k \geq 2$:
\[
(2+k)(\nu-2) + \frac{a_3 b}{a_2} (3-\nu) \leq 4(\nu-2) + 2(3-\nu) = 2(\nu-1) < 0.
\]
Therefore we also have $|S_{k+1}| \leq |S_k|$. Now the first term in the series is $S_2 = -(\frac{b}{2m})^2 \frac{(1-\nu)\nu}{3!} a_2 < 0$. Thus the whole series is also negative. In this case, we have
\[
f(\nu) < (2-\nu) (2m)^{1-\nu} (a_1 + a_2 - a_3 b + S_2 + S_3).
\]
The equation \eqref{second_case} can be obtained by inserting the expressions of $S_2$ and $S_3$.
\end{proof}

\section{Some results in the proof of Lemma \ref{ebar}} \label{sec:80}
Now we provide the proof of some results used in the proof of Lemma \ref{ebar}. The
proof is generally in accordance with the corresponding results in
\cite{liao2019dis}. The difference is that according to our definition of
$P_j^n$, the equations \eqref{mdengyu1} and \eqref{m2dengyu1} are not
equalities. Consequently, the results in \cite{liao2019dis} cannot be directly
applied to our case. Below we divide the proof into three lemmas.
\begin{lemma}\label{Pnjk}
The discrete kernels $P_{j}^{n}$ defined in \eqref{Pjn} satisfy
\begin{gather} \label{plow}
0\leq P_{n-j}^{n}\leq \pi_B\Gamma(2-\nu)\Delta x^{\nu},
  \qquad 3\leq j\leq n\leq 2N,\\\label{p1low}
\sum_{j=3}^{n}P_{n-j}^{n}\omega_{1-\nu}(x_j)\leq\pi_B,
  \qquad 3\leq n\leq 2N.
\end{gather}

\end{lemma}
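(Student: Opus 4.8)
The plan is to prove the two estimates separately, in each case combining the defining identity \eqref{pnj} of the complementary kernels with the sharp lower bounds and the monotonicity of $\bar{B}_k^n$ furnished by Lemma \ref{pib}. Throughout I will use that \eqref{pnj} is a genuine identity whenever $3\le m\le n$ --- it involves only $P_0^n,\dots,P_{n-m}^n$, which are precisely the kernels produced by the recursion \eqref{Pjn} --- whereas the truncation $P_{n-2}^n=P_{n-1}^n=0$ turns the cases $m=1,2$ into the one-sided bounds \eqref{mdengyu1}--\eqref{m2dengyu1}.

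For \eqref{plow}, the nonnegativity of $P_{n-j}^n$ is immediate: $P_0^n=1/\bar{B}_0^n>0$, $P_{n-2}^n=P_{n-1}^n=0$, and for $1\le j\le n-3$ one runs \eqref{Pjn} by induction on $j$, using $\bar{B}_{j-k-1}^{n-k}-\bar{B}_{j-k}^{n-k}>0$ and $\bar{B}_0^{n-j}>0$ from Lemma \ref{pib}. For the upper bound I would take $m=j$ in \eqref{pnj}, which is legitimate since $3\le j\le n$; discarding the nonnegative tail $\ell>j$ leaves
\[
1=\sum_{\ell=j}^{n}P_{n-\ell}^{n}\bar{B}_{\ell-j}^{\ell}\ \ge\ P_{n-j}^{n}\bar{B}_{0}^{j}.
\]
Since $\bar{B}_0^j=\Delta x^{-\nu}\alpha_0=\frac{\nu+2}{(2-\nu)2^{\nu}}\cdot\frac{1}{\Delta x^{\nu}\Gamma(2-\nu)}\ge\frac{1}{\Delta x^{\nu}\Gamma(2-\nu)}$ --- the factor $\frac{\nu+2}{(2-\nu)2^{\nu}}$ being $\ge1$ on $(0,1)$, exactly as already noted in the proof of Lemma \ref{pib} --- we conclude $P_{n-j}^n\le\Gamma(2-\nu)\Delta x^{\nu}\le\pi_B\Gamma(2-\nu)\Delta x^{\nu}$.

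For \eqref{p1low}, the key observation is that a single coefficient $\bar{B}_{j-1}^j$ already controls $\omega_{1-\nu}(x_j)$ up to the constant $\pi_B$. Indeed, Lemma \ref{pib} with $n=j$ and $k=j-1$ gives $\bar{B}_{j-1}^{j}\ge\frac{1}{\pi_B\Delta x^{\nu}\Gamma(2-\nu)}[j^{1-\nu}-(j-1)^{1-\nu}]$, while concavity of $t\mapsto t^{1-\nu}$ yields $j^{1-\nu}-(j-1)^{1-\nu}\ge(1-\nu)j^{-\nu}$; together these give $\bar{B}_{j-1}^{j}\ge\frac{j^{-\nu}}{\pi_B\Delta x^{\nu}\Gamma(1-\nu)}=\frac{1}{\pi_B}\omega_{1-\nu}(x_j)$. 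Hence
\[
\sum_{j=3}^{n}P_{n-j}^{n}\omega_{1-\nu}(x_j)\ \le\ \pi_B\sum_{j=3}^{n}P_{n-j}^{n}\bar{B}_{j-1}^{j}\ \le\ \pi_B\sum_{j=3}^{n}P_{n-j}^{n}\bar{B}_{j-3}^{j}=\pi_B,
\]
where the middle inequality uses the monotonicity $\bar{B}_{j-1}^{j}\le\bar{B}_{j-3}^{j}$ (Lemma \ref{pib}) and the last equality is \eqref{pnj} with $m=3$; equivalently, the middle step is read off directly from \eqref{mdengyu1}.

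Since the heavy analytic work --- the sharp lower bounds on $\bar{B}_k^n$ --- is already contained in Lemma \ref{pib}, this proof is essentially bookkeeping rather than a fresh estimate. The only places that require care are the reduction $\omega_{1-\nu}(x_j)\le\pi_B\bar{B}_{j-1}^j$ (i.e. verifying the two elementary inequalities $j^{1-\nu}-(j-1)^{1-\nu}\ge(1-\nu)j^{-\nu}$ and $\frac{\nu+2}{(2-\nu)2^{\nu}}\ge1$ for $\nu\in(0,1)$) and keeping consistent track of when \eqref{pnj} is an exact identity as opposed to the one-sided relations \eqref{mdengyu1}--\eqref{m2dengyu1}; I do not anticipate any genuine obstacle beyond this.
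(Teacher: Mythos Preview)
Your proof is correct and follows essentially the same approach as the paper: for \eqref{p1low} the argument is identical (use Lemma \ref{pib} with $k=j-1$ plus concavity to get $\omega_{1-\nu}(x_j)\le\pi_B\bar{B}_{j-1}^j$, then monotonicity and \eqref{pnj} with $m=3$), while for \eqref{plow} the paper simply cites \cite[Lemma 2.1]{liao2019dis} whereas you spell out the standard argument (nonnegativity by induction from \eqref{Pjn}, upper bound by isolating the $\ell=j$ term in \eqref{pnj}). There is nothing genuinely different here.
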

\begin{proof}
According \cite[Lemma 2.1]{liao2019dis}, we can directly obtain (\ref{plow}).
We mainly focus on \eqref{p1low}. Taking $n=j$ and $k=j-1$ in \eqref{dyxj1},
we have
\beq
\begin{split}
\bar{B}_{j-1}^{j} & \geq\frac{1}{\pi_B\Delta x \Gamma(2-\nu)} [j^{1-\nu} - (j-1)^{1-\nu}]
= \frac{1}{\pi_B\Delta x \Gamma(1-\nu)} \frac{j^{1-\nu} - (j-1)^{1-\nu}}{1-\nu} \\
&\geq\frac{1}{\pi_B\Delta x^{\nu}\Gamma(1-\nu)}j^{-\nu}
=\frac{1}{\pi_B}\frac{x_j^{-\nu}}{\Gamma(1-\nu)}=\frac{1}{\pi_B}\omega_{1-\nu}(x_j),
\end{split}
\eeq
which indicates $\omega_{1-\nu}(x_j)\leq\pi_B\bar{B}_{j-1}^{j}$. By \eqref{pnj}
and Lemma \ref{pib}, we obtain
\[
\sum_{j=3}^{n}P_{n-j}^{n}\omega_{1-\nu}(x_j)
\leq\pi_B\sum_{j=3}^{n}P_{n-j}^{n}\bar{B}_{j-1}^{j}
\leq\pi_B\sum_{j=3}^{n}P_{n-j}^{n}\bar{B}_{j-3}^{j}
=\pi_B,
\]
as completes the proof.
\end{proof}
\begin{lemma}\label{vdex1z}
Let $v:[0,T]\rightarrow R$ be a continuous and piecewise $C^1$ function whose derivative $v'(x)$ is nonnegative for all $x \in [0,T]$. Then

(I) If $v'$ is monotonically decreasing, we have
\beq\label{pnjjn}
\sum_{j=3}^{n}P_{n-j}^{n}(_0D^{\nu}_x v)(x_j)\leq\pi_B\int_{0}^{x_n}v'(s)ds=\pi_B[v(x_n)-v(0)],
 \qquad 3\leq n\leq 2N.
\eeq

(II) If $v'$ is monotonic, then
\beq\label{p1njjn}
\sum_{j=3}^{n-1}P_{n-j}^{n}(_0D^{\nu}_x v(x_j))\leq\pi_B\int_{0}^{x_n}v'(s)ds=\pi_B[v(x_n)-v(0)],
  \qquad 3\leq n\leq 2N.
\eeq
\end{lemma}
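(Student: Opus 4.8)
The plan is to mimic the classical argument that relates a discrete convolution against $P_{n-j}^n$ to an integral of $v'$, exploiting the defining relation \eqref{pnj} of the complementary kernels together with the lower bound \eqref{dyxj1} for $\bar{B}_k^j$. The starting point is the elementary observation that, since $v'$ is nonnegative, the Caputo derivative $({}_0D^{\nu}_x v)(x_j) = \int_0^{x_j} \omega_{1-\nu}(x_j - s) v'(s)\,ds$ can be bounded above by a Riemann-type sum. Concretely, I would split the integral over the subintervals $[x_{i-1}, x_i]$ and, using the monotonicity of $v'$ (so that $v'(s) \leq v'(x_{i-1})$ when $v'$ is decreasing), majorize $({}_0D^{\nu}_x v)(x_j)$ by a sum of the form $\sum_i c_{j,i}\, v'(x_{i-1})$, where $c_{j,i} = \int_{x_{i-1}}^{x_i} \omega_{1-\nu}(x_j - s)\,ds$. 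The key inequality \eqref{dyxj1} from Lemma \ref{pib} says precisely that $\bar{B}_{j-i}^j \geq \frac{1}{\pi_B \Delta x} c_{j,i}$, so $c_{j,i} \leq \pi_B \Delta x\, \bar{B}_{j-i}^j$.

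Next I would substitute this bound into $\sum_{j=3}^n P_{n-j}^n ({}_0D^{\nu}_x v)(x_j)$, interchange the order of summation, and use the defining identity \eqref{pnj}, namely $\sum_{j=m}^n P_{n-j}^n \bar{B}_{j-m}^j \equiv 1$ for $3 \leq m \leq n$. For the indices $i$ that fall in the range where \eqref{pnj} applies directly, the inner sum over $j$ collapses to $1$, leaving $\sum_i \Delta x\, v'(x_{i-1}) \cdot \pi_B$, which is a left Riemann sum for $\pi_B \int_0^{x_n} v'(s)\,ds$; since $v'$ is decreasing (part (I)) the left Riemann sum overestimates the integral, but that goes the wrong way, so I must instead arrange the majorization so that the right endpoints are used, or equivalently bound the left Riemann sum by $\pi_B[v(x_n) - v(0)]$ after a telescoping argument. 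This is exactly the structure of the proof of \cite[Lemma 2.1 and the surrounding estimates]{liao2019dis}, and I would follow that template, being careful that our summation starts at $j = 3$ rather than $j = 1$.

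The one genuinely new point — and the place where this lemma differs from the cited reference — is that because of the definition \eqref{pnj2j1} setting $P_{n-2}^n = P_{n-1}^n = 0$, the identities \eqref{mdengyu1} and \eqref{m2dengyu1} are only inequalities $\sum_{j} P_{n-j}^n \bar{B}_{j-m}^j \leq 1$ for $m = 1, 2$, not equalities. Thus when the inner index $i$ corresponds to $m = 1$ or $m = 2$ (i.e. the contributions from $v'$ near $x_0$ and $x_1$), I can only bound the inner sum over $j$ by $1$ from above, which is in fact the direction we want for an upper bound on the left-hand side. So these boundary terms cause no difficulty for the upper estimate; they are precisely why we state the result as an inequality. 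For part (II), the sum runs only to $n-1$, which drops the $j = n$ term; since $P_0^n \bar{B}_0^n$ together with the missing $\bar B_1^n, \bar B_2^n$ pieces would have completed the telescoped identity to $1$, removing the top term only makes the left-hand side smaller, so the same bound holds, and the hypothesis that $v'$ is merely monotonic (not necessarily decreasing) now suffices because the troublesome near-diagonal term $j = n$ is absent. The main obstacle is bookkeeping: keeping track of which indices $i$ land in the ``$m \geq 3$'' regime where \eqref{pnj} is an equality versus the ``$m \in \{1,2\}$'' regime where we only have the inequality, and making sure the Riemann-sum comparison for $v'$ is set up in the direction consistent with the monotonicity hypothesis in each of the two cases.
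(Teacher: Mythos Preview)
Your overall architecture (split the Caputo integral over subintervals, invoke the bound $c_{j,i}\le\pi_B\Delta x\,\bar B_{j-i}^j$ from Lemma \ref{pib}, interchange sums, and use \eqref{pnj}/\eqref{mdengyu1}/\eqref{m2dengyu1}) is the same as the paper's, and your remarks about the $m\in\{1,2\}$ boundary indices are correct. But the core of part (I) has a genuine gap that you yourself flag without resolving.

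When $v'$ is decreasing, the only pointwise bound available on $[x_{i-1},x_i]$ is $v'(s)\le v'(x_{i-1})$. This produces $\pi_B\Delta x\sum_i v'(x_{i-1})$, a \emph{left} Riemann sum, and for a decreasing integrand that sum is $\ge\int_0^{x_n}v'$, which is the wrong direction. Neither of your proposed fixes works: you cannot ``switch to right endpoints'' because $v'(s)\le v'(x_i)$ is simply false for decreasing $v'$; and you cannot ``bound the left Riemann sum by $v(x_n)-v(0)$ after a telescoping argument'' because that inequality is false in general. A shift trick (comparing $\Delta x\,v'(x_{i-1})$ with $\int_{x_{i-2}}^{x_{i-1}}v'$) leaves an uncontrolled boundary term $\Delta x\,v'(x_0)$, which blows up exactly in the application to $v_k(x)=x^{k\nu}/\Gamma(1+k\nu)$ for $k\nu<1$.

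The paper closes this gap by replacing the pointwise majorization with Chebyshev's sorting inequality: on each $[x_{k-1},x_k]$, since $\omega_{1-\nu}(x_j-\cdot)$ is increasing and $v'$ is decreasing,
\[
\int_{x_{k-1}}^{x_k}\omega_{1-\nu}(x_j-s)v'(s)\,ds
\le \frac{1}{\Delta x}\int_{x_{k-1}}^{x_k}\omega_{1-\nu}(x_j-s)\,ds
     \int_{x_{k-1}}^{x_k}v'(s)\,ds.
\]
This keeps the factor $\int_{x_{k-1}}^{x_k}v'(s)\,ds=v(x_k)-v(x_{k-1})$ intact, so after interchanging sums you telescope \emph{exactly} to $v(x_n)-v(0)$ with no Riemann-sum loss. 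That is the missing idea.

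For part (II), your instinct that dropping $j=n$ matters is right, but the mechanism is not the one you describe. The paper splits into two cases: if $v'$ is decreasing, (II) follows from (I) by discarding a nonnegative term. If $v'$ is increasing, one uses $v'(s)\le v'(x_k)$ on $[x_{k-1},x_k]$, giving $\pi_B\Delta x\sum_{k=1}^{n-1}v'(x_k)$ after the interchange; because the outer sum stops at $j=n-1$, the inner index $k$ only runs to $n-1$, and then for increasing $v'$ one has $\Delta x\,v'(x_k)\le\int_{x_k}^{x_{k+1}}v'(s)\,ds$, so the sum is bounded by $\int_{x_1}^{x_n}v'\le\int_0^{x_n}v'$. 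The role of the truncation at $n-1$ is to make room for this one-step shift, not to avoid a ``troublesome near-diagonal term'' in the $\bar B$ identity.
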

\begin{proof}
(I) The proof requires the Chebyshev's sorting inequality \cite[P.168, item 236]{GHardy1934}:
if $f$ is monotone increasing and $g$ is monotone decreasing on the interval $[a,b]$, and both functions are integrable, we have
\[
(b-a)\int^b_a f(s)g(s)ds\leq \int^b_a f(s)ds \int^b_a g(s)ds.
\]

In this inequality, we set $[a,b]=[x_{k-1},x_k]$, $f(s)=w_{1-\alpha}(x_j-s)$ and $g(s)=v'(s)\geq 0$. Using Lemma \ref{pib}, we see that when $j\geq 3$,
\beq\label{FSJ}
\begin{split}
(_0D^{\nu}_x v)(x_j) &= \int^{x_j}_0 w_{1-\alpha}(x_j-s) v'(s)ds=\sum^j_{k=1} \int^{x_k}_{x_{k-1}}w_{1-\alpha}(x_j-s) v'(s)ds\\
&\leq \sum^j_{k=1} \frac{1}{\Delta x}\int^{x_k}_{x_{k-1}}w_{1-\alpha}(x_j-s) ds\int^{x_k}_{x_{k-1}}v'(s) ds\\
&\leq  \sum^j_{k=1} \pi_B \bar{B}^j_{j-k}\int^{x_k}_{x_{k-1}}v'(s) ds=\pi_B \sum^j_{k=1}  \bar{B}^j_{j-k}\int^{x_k}_{x_{k-1}}v'(s) ds.
\end{split}
\eeq
Thus, from the \eqref{pnj}, \eqref{mdengyu1} and \eqref{m2dengyu1}, we conclude that
\begin{align*}
\sum^n_{j=3}P^n_{n-j}(_0D^{\nu}_x v)(x_j) &\leq \sum^n_{j=3}P^n_{n-j}\pi_B \sum^j_{k=1}  \bar{B}^j_{j-k}\int^{x_k}_{x_{k-1}}v'(s) ds\\
&= \pi_B\sum^n_{k=1}\int^{x_k}_{x_{k-1}}v'(s) ds \sum^n_{j=k}P^n_{n-j}\bar{B}^j_{j-k}
 \leq \pi_B\int^{x_n}_0v'(s) ds.
\end{align*}

(II) Since $v'(x) \geq 0$, we have
\[
(_0D^{\nu}_x v)(x_j) =\sum^j_{k=1} \int^{x_k}_{x_{k-1}}w_{1-\alpha}(x_j-s) v'(s)ds\geq0.
\]
Therefore if $v'$ is monotonically decreasing, then \eqref{p1njjn} is a simple corollary of \eqref{pnjjn}.
If $v'$ is increasing, we can use Lemma \ref{pib} and \eqref{pnj2j1} to obtain
\begin{align*}
&\sum_{j=3}^{n-1}P_{n-j}^{n}(_0D^{\nu}_x v)(x_j)
=\sum_{j=3}^{n-1}P_{n-j}^{n}\sum^j_{k=1} \int^{x_k}_{x_{k-1}}w_{1-\alpha}(x_j-s) v'(s)ds\\
\leq{} &\sum_{j=3}^{n-1}P_{n-j}^{n}\sum^j_{k=1}v'(x_k) \int^{x_k}_{x_{k-1}}w_{1-\alpha}(x_j-s)ds
\leq\pi_B\Delta x\sum_{j=3}^{n-1}P_{n-j}^{n}\sum^j_{k=1}v'(x_k)\bar{B}^j_{j-k}\\
={} &\pi_B\Delta x\sum_{j=1}^{n-1}P_{n-j}^{n}\sum^j_{k=1}v'(x_k)\bar{B}^j_{j-k}
=\pi_B\Delta x\sum^{n-1}_{k=1}v'(x_k)\sum_{j=k}^{n-1}P_{n-j}^{n}\bar{B}^j_{j-k}\\
\leq{} &\pi_B\Delta x\sum^{n-1}_{k=1}v'(x_k)
\leq\pi_B\sum^{n-1}_{k=1}\int_{x_k}^{x_{k+1}}v'(s)ds
\leq\pi_B\int_{0}^{x_{n}}v'(s)ds.
\end{align*}
This proves \eqref{p1njjn}.
\end{proof}
\begin{lemma}\label{vdexz}
For the discrete kernels $P_{j}^{n}$ defined in \eqref{Pjn}, it holds for any $\mu > 0$ that
\beq \label{mlineq}
\sum_{j=3}^{n-1} P_{n-j}^n E_{\nu}(\mu x_j^{\nu}) \leq \frac{\pi_B}{\mu} [E_{\nu} (\mu x_n^{\nu}) - 1],
  \qquad 3\leq n\leq 2N,
\eeq
where $E_{\nu}(\cdot)$ is the Mittag-Leffler function defined by
\beq \label{ml}
E_{\nu}(z) := \sum_{k=0}^{+\infty} \frac{z^k}{\Gamma(1 + k\nu)}.
\eeq
\end{lemma}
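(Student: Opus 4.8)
The plan is to deduce \eqref{mlineq} by applying the kernel estimate of Lemma \ref{vdex1z}(II) to each monomial in the Taylor series \eqref{ml} of the Mittag-Leffler function and then summing. First I would fix an integer $k\geq 1$ and take $v(x)=x^{k\nu}$. This $v$ is continuous on $[0,T]$, is $C^1$ on $(0,T]$ with nonnegative derivative $v'(x)=k\nu\,x^{k\nu-1}$, and $v'$ is monotone on $(0,T]$ (decreasing when $k\nu\leq 1$, increasing when $k\nu\geq 1$), so the hypotheses of Lemma \ref{vdex1z}(II) are satisfied. Using the classical formula ${}_0D^{\nu}_x(x^{k\nu})=\frac{\Gamma(1+k\nu)}{\Gamma(1+(k-1)\nu)}\,x^{(k-1)\nu}$ together with $v(0)=0$ and $\int_0^{x_n}v'(s)\,ds=x_n^{k\nu}$, Lemma \ref{vdex1z}(II) yields
\[
\frac{\Gamma(1+k\nu)}{\Gamma(1+(k-1)\nu)}\sum_{j=3}^{n-1}P_{n-j}^{n}\,x_j^{(k-1)\nu}\;\leq\;\pi_B\,x_n^{k\nu},\qquad k=1,2,\cdots,\ \ 3\leq n\leq 2N.
\]

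Next I would multiply the $k$-th inequality by $\mu^{k}/\Gamma(1+k\nu)>0$ and sum over $k\geq 1$. Since all terms are nonnegative and the series $\sum_{k\geq 0}\mu^{k}x^{k\nu}/\Gamma(1+k\nu)$ converges for every $x\geq 0$, Tonelli's theorem permits the interchange of the two summations. On the left-hand side the factors $\Gamma(1+k\nu)$ cancel; reindexing by $l=k-1$, the inner sum over $k$ becomes $\mu\sum_{l\geq 0}(\mu x_j^{\nu})^{l}/\Gamma(1+l\nu)=\mu E_{\nu}(\mu x_j^{\nu})$, so the whole left-hand side equals $\mu\sum_{j=3}^{n-1}P_{n-j}^{n}E_{\nu}(\mu x_j^{\nu})$. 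On the right-hand side one recognizes $\pi_B\sum_{k\geq 1}(\mu x_n^{\nu})^{k}/\Gamma(1+k\nu)=\pi_B[E_{\nu}(\mu x_n^{\nu})-1]$. Dividing through by $\mu$ gives exactly \eqref{mlineq}.

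The only point that requires a word of care, and the one I expect to be the main obstacle to a fully rigorous write-up, is the application of Lemma \ref{vdex1z}(II) to $v(x)=x^{k\nu}$ in the range $k\nu<1$, where $v'$ is singular at $x=0$. I would observe that $v'$ is nonetheless Lebesgue integrable on $[0,T]$ and that $v$ is continuous there, and that the proof of Lemma \ref{vdex1z} uses $v'$ only through one-dimensional integrals over the mesh intervals $[x_{k-1},x_k]$ (via Chebyshev's sorting inequality), so the singularity on the first subinterval causes no difficulty; alternatively, for $k\nu\leq 1$ one may invoke Lemma \ref{vdex1z}(I) directly, whose conclusion $\sum_{j=3}^{n}P_{n-j}^{n}({}_0D^{\nu}_x v)(x_j)\leq \pi_B[v(x_n)-v(0)]$ dominates the truncated sum $\sum_{j=3}^{n-1}$ because every summand is nonnegative. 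With that settled, the remaining steps are routine bookkeeping with absolutely convergent series.
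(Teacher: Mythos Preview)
Your proposal is correct and follows essentially the same route as the paper: apply Lemma~\ref{vdex1z}(II) to each monomial $x^{k\nu}$ (the paper uses the normalized $v_k(x)=x^{k\nu}/\Gamma(1+k\nu)$), multiply by the appropriate power of $\mu$, and sum the resulting inequalities to rebuild $E_\nu$. Your extra care about the singularity of $v'$ at $x=0$ when $k\nu<1$ is more thorough than the paper, which glosses over this point; your suggested fix via Lemma~\ref{vdex1z}(I) (and then dropping the $j=n$ term) is exactly right.
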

\begin{proof}
Define $v_k(x) = x^{k\nu} / \Gamma(1+k\nu)$. Then
\beq\label{enu}
E_{\nu}(\mu x^{\nu})=\sum_{k=0}^{+\infty} \frac{\mu^k x^{k\nu}}{\Gamma(1 + k\nu)}
= \sum_{k=0}^{+\infty}\mu^kv_k(x).
\eeq
The function $v_k(x)$ satisfies
\begin{gather} \label{vkt}
v_0(x)=1, \qquad 
v'_k(x)=\frac{x^{k\nu-1}}{\Gamma(k\nu)}=\omega_{k\nu}(x), \qquad
v''_k(x)=\frac{(k\nu-1)x^{k\nu-2}}{\Gamma(k\nu)}, \\
\label{vkm1}
(_0D^{\nu}_x v_k)(x)
=\int^{x}_0 w_{1-\alpha}(x_j-s)\omega_{k\nu}(s)ds=\omega_{1+(k-1)\nu}(x)
=v_{k-1}(x),\quad \forall k\geq1.
\end{gather}
Therefore for all $x>0$, $v''_k(x)\leq0$ if $k\nu-1\leq0$ and $v''_k(x)>0$ if $k\nu-1>0$.
Thus, $v'_k(x)$ is non-negative and monotonic, so we can apply \eqref{p1njjn} to get
\beq\label{ppj3n}
\sum_{j=3}^{n-1}P_{n-j}^{n}(_0D^{\nu}_x v_k)(x_j)\leq \pi_B[v_k(x_n)-v_k(0)]
=\pi_Bv_k(x_n), \qquad \forall k\geq1.
\eeq
The equations \eqref{vkm1} and \eqref{ppj3n} yield
\beq\label{kdengyu3}
\sum_{j=3}^{n-1}P_{n-j}^{n}\sum_{k=1}^{m}\mu^kv_{k-1}(x_j)\leq\pi_B\sum_{k=1}^{m}\mu^kv_k(x_n).
\eeq
Now we take the limit $m\rightarrow+\infty$. The right-hand side of the above
inequality approaches to $\pi_B (E_{\nu} (\mu x_n^{\nu}) - 1)$, and the limit
of the left-hand side is
\[
\sum_{j=3}^{n-1}P_{n-j}^{n}\sum_{k=1}^{+\infty}\mu^kv_{k-1}(x_j)=
  \sum_{j=3}^{n-1}P_{n-j}^{n}\mu\sum_{k=0}^{+\infty}\mu^kv_k(x_j)=
  \sum_{j=3}^{n-1}P_{n-j}^{n}\mu E_{\nu}(\mu x_j^{\nu}).
\]
This completes the proof.
\end{proof}

\bibliographystyle{siamplain}
\bibliography{ref2}
\end{document}